\newtheorem{Proposition}{Proposition}[section]
\newtheorem{Lemma}[Proposition]{Lemma}
\newtheorem{Theorem}[Proposition]{Theorem}
\newtheorem{Corollary}[Proposition]{Corollary}
\newtheorem{Problem}[Proposition]{Problem}
\newtheorem{Conjecture}[Proposition]{Conjecture}
\newtheorem{Hypothesis}[Proposition]{Hypothesis}
\newbox\squ  
\def\FF{\mathbb{F}}
\def\Rep#1{\operatorname{Rep}(#1)}
\def\Proj#1{\operatorname{Proj}(#1)}
\def\Mod#1{#1\!\operatorname{-Mod}}
\def\CH{{\operatorname{ch}_q\:}}
\def\Q{{\mathbb Q}}
\def\Z{{\mathbb Z}}
\def\0{{\bar 0}}
\def\1{{\bar 1}}
\def\HOM{{\operatorname{HOM}}}
\def\Hom{{\operatorname{Hom}}}
\def\Laurent{\mathcal A}
\def\qdim{{\operatorname{dim}_q}\,}
\def\Ind{{\operatorname{Ind}}}
\def\Res{{\operatorname{Res}}}
\def\ch{{\operatorname{ch}\:}}
\def\height{{\operatorname{ht}}}
\def\bi{\text{\boldmath$i$}}
\def\bj{\text{\boldmath$j$}}
\def\bk{\text{\boldmath$k$}}
\def\bl{\text{\boldmath$l$}}
\def\eps{{\varepsilon}}
\def\phi{{\varphi}}
\def\emptyset{{\varnothing}}
\def\ga{{\gamma}}
\def\Ga{{\Gamma}}
\def\de{{\delta}}
\def\De{{\Delta}}
\def\al{{\alpha}}
\def\be{{\beta}}
\def\si{{\sigma}}
\def\iso{\,\tilde\rightarrow\,}
\def\underbar{\mathpalette\@underbar}
\def\h{{\mathfrak h}}
\def\words{{\mathbf W}}
\def\onto{{\twoheadrightarrow}}
\def\into{{\hookrightarrow}}
\def\@underbar#1#2{\settowidth{\@tempdimb}{$#1#2$}\@tempdimb=0.8\@tempdimb
                   \ooalign{$#1#2$\crcr%
                         \hfil\rule[-.5mm]{\@tempdimb}{.4pt}\hfil}}
\newdimen\hoogte    \hoogte=11.5pt    
\newdimen\breedte   \breedte=11.5pt   
\newdimen\dikte     \dikte=0.5pt    
\newenvironment{young}{\begingroup
       \def\vr{\vrule height0.8\hoogte width\dikte depth 0.2\hoogte}
       \def\fbox##1{\vbox{\offinterlineskip
                    \hrule height\dikte
                    \hbox to \breedte{\vr\hfill##1\hfill\vr}
                    \hrule height\dikte}}
       \vbox\bgroup \offinterlineskip \tabskip=-\dikte \lineskip=-\dikte
            \halign\bgroup &\fbox{##\unskip}\unskip  \crcr }
       {\egroup\egroup\endgroup}
\def\diagram#1{\relax\ifmmode\vcenter{\,\begin{young}#1\end{young}\,}\else%
              $\vcenter{\,\begin{young}#1\end{young}\,}$\fi}
\begin{document}

\title[Representations of KLR Algebras]{Representations of Khovanov-Lauda-Rouquier  Algebras and Combinatorics of Lyndon Words}
\author{Alexander Kleshchev and Arun Ram}

\begin{abstract}
We construct irreducible representations of affine Khovanov-Lauda-Rouquier algebras of arbitrary finite type. The irreducible representations arise as simple heads of appropriate induced modules, and thus our construction is similar to that of Bernstein and Zelevinsky for affine Hecke algebras of type $A$. The highest weights of irreducible modules are given by the so-called good words, and the highest weights of the `cuspidal modules' are given by the good Lyndon words. In a sense, this has been predicted by Leclerc. 
\end{abstract}
\thanks{{\em 2000 Mathematics Subject Classification:} 20C08.}
\thanks{The first author is supported by NSF grant 
DMS-0654147. The second author is supported by
NSF Grant DMS-0353038
and Australian Research Council Grants DP0986774
and DP0879951.}
\address{Department of Mathematics, University of Oregon, Eugene, Oregon, USA.}
\email{klesh@uoregon.edu}
\address{Department of Mathematics and Statistics,  
The University of Melbourne, 
Parkville, VIC, 3010,  
Australia and Department of Mathematics, 
University of Wisconsin--Madison, 
Madison, WI 53706}
\email{aram@unimelb.edu.au}
\maketitle

\section{Introduction}\label{SIntro}

Khovanov and Lauda \cite{KL1,KL2} and Rouquier \cite{R} have introduced a new family of graded algebras whose representation theory is related to categorification of quantum groups. In this paper we construct irreducible representations of Khovanov-Lauda-Rouquier algebras of finite type. 

The irreducible representations of Khovanov-Lauda-Rouquier algebras arise as simple heads of appropriate induced modules, and thus our construction is similar to that of Bernstein and Zelevinsky \cite{BZ,Z} for affine Hecke algebras of type $A$. In type $A$ this is not surprising in view of \cite{BKyoung} and \cite[\S3.2.6]{R}. However, our parameterization of irreducible modules depends on the choice of order of simple roots, which can be arbitrary. In this sense our results are more general than the usual Bernstein-Zelevinsky classification even in type $A$. 

The highest weights of irreducible modules are given by the so-called good words, and the highest weights of the `cuspidal modules' are given by the good Lyndon words. In a sense, this has been predicted by Leclerc~\cite{Lec}. 

Our classification works over fields of arbitrary characteristic. In fact, we conjecture that the formal characters of irreducible modules over Khovanov-Lauda-Rouquier algebras of finite types do not depend on the characteristic of the ground field, see Conjecture~\ref{Conj}. 

An alternative {\em inductive} approach to classification of irreducible modules over Khovanov-Lauda-Rouquier algebras is suggested by Lauda and Vazirani \cite{LV}. They define crystal operations on irreducible modules in terms of induction and restriction functors and identify the resulting crystals. An irreducible module is then characterized through its branching properties to smaller algebras whose irreducible modules are known by  induction. 

\section{Preliminaries}

\subsection{Cartan datum} A {\em Cartan datum} is a pair $(I,\cdot)$ consisting of a set $I$ and a $\Z$-valued symmetric bilinear form $i,j\mapsto i\cdot j$ on the free abelian group $\Z[I]$ such that $i\cdot i\in \{2,4,6,\dots\}$ for all $i\in I$ and 
$2(i\cdot j)/(i\cdot i)\in\{0,-1,-2\dots\}$ for all $i\neq j$ in $I$. Let 
$Q_+ := \bigoplus_{i \in I} \Z_{\geq 0} i$. For $\alpha \in Q_+$, we write $\height(\alpha)$ for the sum of its 
coefficients when expanded in terms of the $i$'s.

Set
$$
a_{ij}:=2(i\cdot j)/(i\cdot i)\qquad(i,j\in I).
$$
If  the {\em Cartan matrix} $A:=(a_{ij})_{i,j\in I}$ has finite type, see \cite[\S 4]{Kac}, then we also say that the Cartan datum is of finite type. 
Following \cite[\S 1.1]{Kac}, let $(\h,\Pi,\Pi^\vee)$ be a realization of the Cartan matrix $A$, so we have simple roots $\{\al_i\mid i\in I\}$. We identify $\al_i$ and $i$. 
Denote by $\De_+\subset Q_+$ the set of {\em positive}\, roots, cf. \cite[\S 1.3]{Kac}. 

\subsection{Ground rings and parameters}\label{SSGRP}
Throughout the paper $\FF$ is an arbitrary field. 

Let $q$ be indeterminate and $\Q(q)$ the field of rational fractions. We denote $\Laurent:=\Z[q,q^{-1}]$. Let\, $\bar{ }:\Q(q)\to \Q(q)$ be the $\Q$-algebra involution with $\bar q=q^{-1}$, referred to as the {\em bar-involution}. We have  $\bar \Laurent=\Laurent$. 

Given $\beta\in \Z[I]$, denote
\begin{align*}
q_\beta&:=q^{(\beta\cdot \beta)/2},\qquad 
[n]_\beta:=(q_\beta^{n}-q_\beta^{-n})/(q_\beta-q_\beta^{-1}),\\ 
 [n]^!_\beta&:=[n]_\beta[n-1]_\beta\dots[1]_\beta, \qquad 
\left[
\begin{matrix}
 n   \\
 m
\end{matrix}
\right]_\beta
=\frac{[n]^!_\beta}{[n-m]^!_\beta[m]^!_\beta}.
\end{align*}
In particular, for $i\in I$, we have $[n]_i,[n]_i^!,\left[
\begin{matrix}
 n   \\
 m
\end{matrix}
\right]_i$. 

Let $A$ be a $Q_+$-graded $\Q(q)$-algebra, $\theta\in A_{\al}$ for $\al\in Q_+$, and $n\in\Z_{\geq 0}$. We use the standard notation for  
quantum divided powers:   $\theta^{(n)}:= \theta^n/[n]_\al^!.$

\subsection{Words}
Denote by $$\words:=\bigsqcup_{d\geq 0} I^d$$ the set of all tuples $\bi=(i_1,\dots,i_d)$ of elements of $I$, which we refer to as {\em words}. We consider $\words$ as a monoid with respect to the concatenation product: $\bi\bj=(i_1,\dots,i_d,j_1,\dots,j_{d'})$ for $\bi=(i_1,\dots,i_d)$ and $\bj=(j_1,\dots,j_{d'})$, the empty word $\emptyset$ being the identity in $\words$. 

If $\bi\in\words$, we can write it in the form 
$
\bi=(j_1)^{m_1}\dots (j_r)^{m_r}
$
for $j_1,\dots,j_r\in I$ such that $j_s\neq j_{s+1}$ for all $s=1,2,\dots,r-1$. 
We then denote
$$
[\bi]!:=[m_1]^!_{j_1}\dots[m_r]^!_{j_r}. 
$$
For example if $\bi=(1,1,2,2,2,1)=(1)^2(2)^3(1)$  then $[\bi]!=[2]^!_1[3]^!_2$. 

For $\bi=(i_1,\dots,i_d)$ denote 
$$
|\bi|:=\al_{i_1}+\dots+\al_{i_d}\in Q_+.
$$
The symmetric group $S_d$ 
with simple transpositions $s_1,\dots,s_{d-1}$ 
acts  
on $I^d$ on the left by place
permutations. The $S_d$-orbits on $I^d$ are the sets
\begin{equation*}
\words^\alpha := \{\bi\in I^d \:|\:|\bi| = \alpha\}
\end{equation*}
parametrized by the elements $\alpha \in Q_+$ of height $d$. 






\subsection{Quivers with automorphism}\label{QA}  
Following \cite[\S 12,14]{Lubook} and 
\cite[\S3.2.4]{R}, 
a {\em graph with a compatible automorphism} consists of the following data 
\begin{enumerate}
\item[$\bullet$] a set $\tilde I$ (vertices);
\item[$\bullet$] a set $H$ (edges) and a map with finite fibers $h\mapsto[h]$ from $H$ to the set of two-element subsets of $\tilde I$;
\item[$\bullet$] automorphisms $a:\tilde I\to \tilde I$ and $a:H\to H$ such that for any $h\in H$ we have  $[a(h)]=a([h])$  as subsets of $\tilde I$, and such that there is no edge between two vertices in the same $a$-orbit. 
\end{enumerate}

Set $I:=\tilde I/a$, and let $i,j\in I$. Define $i\cdot i$ to be the cardinality of the orbit $i$, and $i\cdot j$ to be the negative of the number of edges joining some vertex in the orbit $i$ with some vertex in the orbit $j$. By \cite[\S13.2.9]{Lubook}, $(I,\cdot)$ is a Cartan datum. By \cite[Proposition 14.1.2]{Lubook}, every Cartan datum arises in this way. 


A {\em quiver with a compatible automorphism} is a graph with a compatible automorphism as above together with  maps $s:H\to \tilde I$ (source) and $t:H\to \tilde I$ (target) such that $\{s(h),t(h)\}=[h]$ and $s(a(h))=a(s(h))$, $t(a(h))=a(t(h))$ for all $h\in H$. Let $i,j\in I=\tilde I/a$. 
Set 
$$d_{ij}:= \big|\{h\in H\mid s(h)\in i\ \text{and}\ t(h)\in j\}/a\big|$$
and 
$m(i,j):=\operatorname{LCM}(i\cdot i,j\cdot j)$. 
 It is noted in \cite[\S3.2.4]{R} that
\begin{equation}\label{EDIJDJI}
d_{ij}+d_{ji}=-2(i\cdot j)/m(i,j)\qquad(i\neq j). 
\end{equation}

\section{Khovanov-Lauda-Rouquier algebras}
\subsection{Preliminary data}
Let $\Gamma$ be a quiver with a compatible automorphism as in \S\ref{QA}. The quiver $\Gamma$ determines  the set of polynomials 
$$\{Q_{ij}(u,v)\in \FF[u,v]\mid i,j\in I\}$$ 
as follows. If $i=j$ take $Q_{ij}(u,v)=0$. 
For $i\neq j$, define the polynomial 
$Q_{ij}(u,v)\in \FF[u,v]$
as follows:
\begin{equation}\label{EQIJ}
Q_{ij}(u,v):=(-1)^{d_{ij}}(u^{m(i,j)/(i\cdot i)}-v^{m(i,j)/(j\cdot j)})^{-2(i\cdot j)/m(i,j)}.
\end{equation}

For example, if $i\cdot j=0$ we have $Q_{i,j}(u,v)=1$. Let $i\cdot j\neq0$. Assume that $a_{ij}=-1$ or $a_{ji}=-1$ (this assumption always holds for Cartan matrices of finite type). 
In this case $m(i,j)=\max(i\cdot i,j\cdot j)$. So $-2(i\cdot j)/m(i,j)=1$, $m(i,j)/(i\cdot i)=-a_{ij}$, and $m(i,j)/(j\cdot j)=-a_{ji}$. Finally, in view of (\ref{EDIJDJI}), we have $d_{ij}+d_{ji}=1$. 
In particular, for the case where the Cartan matrix $A$ is of finite type, the polynomials $Q_{ij}(u,v)$ 
are determined by $A$ and a partial order `$\preceq$' on $I$ such that $i\prec j$ or $j\prec i$ whenever $i\cdot j<0$; more precisely, 
\begin{equation}\label{EArun}
Q_{ij}(u,v):=
\left\{
\begin{array}{ll}
0 &\hbox{if $i=j$;}\\
1 &\hbox{if $i\cdot j=0$;}\\
-u^{-a_{ij}}+v^{-a_{ji}} &\hbox{if $i\cdot j<0$ and $i\succ j$;}\\
u^{-a_{ij}}-v^{-a_{ji}} &\hbox{if $i\cdot j<0$ and $i\prec j$.}
\end{array}
\right.
\end{equation}

\subsection{\boldmath The definition and first properties}\label{SSDefKLR}
Fix a quiver $\Ga$ with a compatible automorphism and $\al\in Q_+$ of height $d$. Recall the polynomials $Q_{ij}(u,v)$ defined in (\ref{EQIJ}). 
Let $R_\al=R_\al(\Ga)$ be an associative graded unital $\FF$-algebra, given by the generators
\begin{equation}\label{EKLGens}
\{e(\bi)\mid \bi\in \words^\al\}\cup\{y_1,\dots,y_{d}\}\cup\{\psi_1, \dots,\psi_{d-1}\}
\end{equation}
and the following relations for all $\bi,\bj\in \words^\al$ and all admissible $r$ and $s$:
\begin{equation}
e(\bi) e(\bj) = \de_{\bi,\bj} e(\bi),
\quad{\textstyle\sum_{\bi \in \words^\alpha}} e(\bi) = 1;\label{R1}
\end{equation}
\begin{equation}\label{R2PsiY}
y_r e(\bi) = e(\bi) y_r;
\end{equation}
\begin{equation}
\psi_r e(\bi) = e(s_r\bi) \psi_r;\label{R2PsiE}
\end{equation}
\begin{equation}\label{R3Y}
y_r y_s = y_s y_r;
\end{equation}
\begin{equation}\label{R3YPsi}
y_r \psi_s = \psi_s y_r\qquad (r \neq s,s+1);
\end{equation}
\begin{equation}
(y_{r+1} \psi_r-\psi_r y_r) e(\bi) =
\left\{
\begin{array}{ll}
e(\bi) &\hbox{if $i_r=i_{r+1}$,}\\
0 &\hbox{if $i_r\neq i_{r+1}$;}
\end{array}
\right.
\label{R5}
\end{equation}
\begin{equation}
(y_r\psi_r-\psi_r y_{r+1})e(\bi) 
= 
\left\{
\begin{array}{ll}
-e(\bi) &\hbox{if $i_r=i_{r+1}$,}\\
0 &\hbox{if $i_r\neq i_{r+1}$;}
\end{array}
\right.
\label{R6}
\end{equation}
\begin{equation}
\psi_r^2e(\bi) = Q_{i_r,i_{r+1}}(y_r,y_{r+1})e(\bi)
 \label{R4}
\end{equation}
\begin{equation} 
\psi_r \psi_s = \psi_s \psi_r\qquad (|r-s|>1);\label{R3Psi}
\end{equation}
\begin{equation}
\begin{split}
&(\psi_{r+1}\psi_{r} \psi_{r+1}-\psi_{r} \psi_{r+1} \psi_{r}) e(\bi) 
\\=
&\left\{\begin{array}{ll}
\frac{Q_{i_r,i_{r+1}}(y_{r+2},y_{r+1})-Q_{i_r,i_{r+1}}(y_r,y_{r+1})}{y_{r+2}-y_r}e(\bi)&\text{if $i_r=i_{r+2}$,}\\
0 &\text{otherwise.}
\end{array}\right.
\end{split}
\label{R7}
\end{equation}
The {\em grading} on $R_\al$ is defined by setting:
$$
\deg(e(\bi))=0,\quad \deg(y_re(\bi))=i_r\cdot i_r,\quad\deg(\psi_r e(\bi))=-i_r\cdot i_{r+1}.
$$

\vspace{2 mm}

The expression in the right hand side of (\ref{R7}) should be interpreted as the substitution $u=y_r,v=y_{r+1},w=y_{r+2}$ into $\frac{Q_{i_r,i_{r+1}}(w,v)-Q_{i_r,i_{r+1}}(u,v)}{w-u}$, which is first interpreted as a {\em polynomial} in $\FF[u,v,w]$. 
It is pointed out in \cite{KL2} and \cite[\S3.2.4]{R} that up to isomorphism the graded $\FF$-algebra $R_\al$ depends only on the Cartan datum and $\al$. 

The algebra $R_\alpha$ possesses a graded anti-automorphism
\begin{equation}\label{star}
\tau:R_\alpha \rightarrow R_\alpha,
\end{equation}
which is the identity on generators. 

For each element $w\in S_d$ fix a reduced expression $w=s_{r_1}\dots s_{r_m}$ and set 
$$
\psi_w:=\psi_{r_1}\dots \psi_{r_m}.
$$
In general, $\psi_w$ depends on the choice of the reduced expression of $w$. 

\begin{Theorem}\label{TBasis}{\cite[Theorem 2.5]{KL1}}, \cite[Theorem 3.7]{R} 
The elements 
$$ \{\psi_w y_1^{m_1}\dots y_d^{m_d}e(\bi)\mid w\in S_d,\ m_1,\dots,m_d\in\Z_{\geq 0}, \ \bi\in \words^\al\}
$$ 
form an $\FF$-basis of  $R_\al$. 
\end{Theorem}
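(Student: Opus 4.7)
The plan is to prove spanning and linear independence separately. Both arguments are standard for KLR algebras but genuinely distinct in flavor, and the linear independence half is where the real work lies.

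For spanning, I would argue that an arbitrary word in the generators $e(\bi)$, $y_r$, $\psi_r$ can be rewritten as an $\FF$-linear combination of the proposed basis elements. First, the idempotent relations (\ref{R1}), (\ref{R2PsiY}), (\ref{R2PsiE}) let me collect a single $e(\bi)$ at the right end of any monomial, with $\bi$ determined by how the $\psi_r$'s permute an initial word. Next, I would move all $y$'s to the left of all $\psi$'s. The relations (\ref{R3YPsi}), (\ref{R5}), (\ref{R6}) allow me to commute $y_r$ past $\psi_s$: away from $s\in\{r-1,r\}$ this is free, and at the boundary the commutator is either $0$ or $\pm e(\bi)$ (equivalently, $\pm 1$ after the idempotent has been collected). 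Each such commutation strictly decreases the number of $\psi$'s on the right of the $y$ being moved, so the procedure terminates and produces monomials of the form $\psi_{r_1}\cdots\psi_{r_m}\, y_1^{m_1}\cdots y_d^{m_d}\, e(\bi)$. Finally, the braid-type relation (\ref{R3Psi}) and the deformed braid relation (\ref{R7}) together with the quadratic relation (\ref{R4}) let me replace any unreduced string of $\psi$'s by shorter strings times polynomials in the $y$'s, again by an induction on length: whenever $s_{r_1}\cdots s_{r_m}$ is not reduced, Matsumoto-style manipulations reduce the string, with (\ref{R7}) introducing only $\psi$'s corresponding to strictly shorter permutations plus polynomial terms, and (\ref{R4}) removing adjacent repeats entirely. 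An induction on the length of the permutation (with the inner induction on the degree of the polynomial part) shows that every element of $R_\al$ is in the $\FF$-span of the claimed set.

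For linear independence, I would build a faithful polynomial representation. Concretely, set
$$
\mathrm{Pol}_\al := \bigoplus_{\bi\in\words^\al} \FF[y_1,\dots,y_d]\,e(\bi),
$$
and define an action of the generators of $R_\al$ on $\mathrm{Pol}_\al$ by: $e(\bj)$ acts as projection onto the $\bj$-summand, $y_r$ acts by multiplication by $y_r$ on each summand, and $\psi_r$ acts on $f(y)e(\bi)$ by a modified divided-difference operator whose precise form depends on whether $i_r=i_{r+1}$ or not. When $i_r\neq i_{r+1}$ the operator simply permutes the $y$'s and sends $e(\bi)$ to $e(s_r\bi)$ (scaled so (\ref{R4}) holds), and when $i_r=i_{r+1}$ it is the Demazure operator $(f-{}^{s_r}\!f)/(y_r-y_{r+1})$ times $e(\bi)$. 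A direct but tedious verification shows all relations (\ref{R1})--(\ref{R7}) are satisfied; this is the standard KLR polynomial representation, and the calculation for (\ref{R7}) is what forces the particular form of the operator on the $i_r=i_{r+2}$ case.

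Once the representation is defined, I would compute the action of each proposed basis vector $\psi_w y_1^{m_1}\cdots y_d^{m_d}e(\bi)$ on the cyclic element $1\cdot e(\bi)\in\mathrm{Pol}_\al$. Using that $\psi_w$ applied to a monomial $y^m e(\bi)$ produces, to leading order in a suitable filtration (e.g.\ grading by the length of the permutation acting on the index $\bi$), the monomial $y^m e(w\bi)$ together with strictly lower-order correction terms supported on different words $w'\bi$ with $\ell(w')<\ell(w)$, one sees that distinct basis elements produce $\FF$-linearly independent images: the leading term of $\psi_w y^m e(\bi)\cdot e(\bi)$ lives in the $e(w\bi)$-summand and equals $y^{w(m)}$ (or is determined by $y^m$ in the same summand), so the triangular structure indexed by $(w,m,\bi)$ gives linear independence. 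The main obstacle is bookkeeping this triangularity cleanly: the operators for $\psi_r$ when $i_r=i_{r+1}$ are not pure permutations but Demazure operators, so one has to use a filtration by polynomial degree (not just the permutation action on $\bi$) and verify that the leading terms still form an obviously independent family. With spanning and independence established, Theorem~\ref{TBasis} follows; a corollary of the argument, which I would note, is that the basis is independent of the choices of reduced expressions for the $\psi_w$ only modulo lower-order elements, but the cardinality count makes this harmless.
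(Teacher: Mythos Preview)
The paper does not give its own proof of this statement: Theorem~\ref{TBasis} is quoted verbatim from \cite[Theorem~2.5]{KL1} and \cite[Theorem~3.7]{R} and is used as a black box thereafter. So there is no proof in the paper to compare against. What you have written is, in outline, exactly the argument of Khovanov--Lauda in \cite{KL1}: a straightening lemma for spanning and a faithful polynomial representation for independence.

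Two points of bookkeeping. First, you say you move all $y$'s to the left of all $\psi$'s, but the normal form you then display, $\psi_{r_1}\cdots\psi_{r_m}\,y_1^{m_1}\cdots y_d^{m_d}\,e(\bi)$, has the $\psi$'s on the left; you mean you push the $y$'s to the \emph{right}. Second, and more substantively, your triangularity argument for independence is only sketched at the delicate point. When $\bi$ has repeated letters, many distinct $w$ give the same $w\bi$, so the filtration by ``which $e(\bj)$-summand the image lands in'' does not by itself separate the proposed basis elements; you acknowledge this, but the fix (``use a filtration by polynomial degree'') needs to be made precise. In \cite{KL1} this is handled by observing that on each summand the $\psi_r$ with $i_r=i_{r+1}$ act by genuine divided differences, so $\{\psi_w\cdot 1\mid w\in S_d\}$ restricted to a block with constant stabilizer yields the Schubert polynomials of the relevant parabolic, which are known to be linearly independent over the symmetric polynomials. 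Your sketch would become a proof once this point is nailed down; as written it identifies the right mechanism but does not yet verify it.
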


It is convenient to consider the direct sum of algebras 
$
R:=\bigoplus_{\al_\in Q_+} R_\al.
$
We refer 
to the algebra $R$ as  the {\em (affine) Khovanov-Lauda-Rouquier algebra}. 
Note that $R$ is non-unital, but it is locally unital since each $R_\al$ is unital. It is shown in \cite[Corollary 2.11]{KL1} that the algebras $R_\al$ are indecomposable---in other words they are the blocks of $R$.



\subsection{\boldmath Basic representation theory of $R_\al$}
When considering representation theory of $R_\al$ we follow the notation of \cite{BKAdv}.
 
In this paper {\em grading} always means {\em $\Z$-grading}, unless otherwise stated. Recall that $R_\al$ is a graded algebra.
We are interested in graded representations of $R_\al$.  Let us first record some basic facts of graded representation theory.

For any graded $\FF$-algebra $H$ we denote by $\Mod{H}$ the abelian category of all graded left $H$-modules, with 
morphisms being {\em degree-preserving} module homomorphisms, which we denote by $\Hom$.
Let $\Rep{H}$ denote
the abelian subcategory of all
{\em finite dimensional}\, graded $H$-modules and
 $\Proj{H}$ denote the additive subcategory  of 
all {\em finitely generated projective}\, graded $H$-modules. 
Denote the corresponding Grothendieck groups by
 $[\Rep{H}]$ and $[\Proj{H}]$, respectively. Recall that $\Laurent=\Z[q,q^{-1}]$. Now, our  
Grothendieck groups are $\Laurent$-modules via 
$
q^m[M]:=[M\langle m\rangle],
$ 
where $M\langle m\rangle$ denotes the module obtained by 
shifting the grading up by $m$:
\begin{equation}\label{obvious}
M\langle m\rangle_n:=M_{n-m}.
\end{equation}

For $n \in \Z$, let
$
\Hom_H(M, N)_n := \Hom_H(M \langle n \rangle, N)
= \Hom_H(M, N \langle -n \rangle)
$
denote the space of all homomorphisms
that are homogeneous of degree $n$,
i.e. they map $M_i$ into $N_{i+n}$ for each $i \in \Z$.
Set
$$
\HOM_H(M,N) := \bigoplus_{n \in \Z} \Hom_H(M,N)_n. 
$$
For a finite dimensional 
graded vector space $V=\oplus_{n\in \Z} V_n$, its {\em graded dimension} is $\qdim \, V:=\sum_{n \in \Z}  (\dim V_n)q^n\in\Laurent$. 
There is a {\em natural pairing} 
$$\langle.,.\rangle:[\Proj{H}]\times[\Rep{H}] \rightarrow \Laurent,\quad \langle[P],[M]\rangle := \qdim\ \HOM_H(P,M),$$

Given $M, L \in \Rep{H}$ with $L$ irreducible, 
we write $[M:L]_q$ for the corresponding {\em  graded composition multiplicity},
i.e. 
\begin{equation}\label{EGCM}
[M:L]_q := \sum_{n \in \Z} a_n q^n,
\end{equation}
where $a_n$ is the multiplicity
of $L\langle n\rangle$ in a graded composition series of $M$.

Going back to the algebras $R_\al$, every irreducible graded $R_\al$-module is finite dimensional \cite[Proposition 2.12]{KL1}, and  there are finitely many irreducible modules in $\Rep{R_\al}$ up to isomorphism and grading shift \cite[\S 2.5]{KL1}. For every irreducible module $L\in \Rep{R_\al}$ its {\em projective cover} $P_L$ is the (unique up to a homogeneous isomorphism) 
module in $\Proj{R_\al}$ such that for any irreducible $S\in \Rep{R_\al}$, we have 
$$
\qdim\HOM_{R_\al}(P_L,S)=
\left\{
\begin{array}{ll}
q^{-n} &\hbox{if $S\cong L\langle n\rangle$ for some $n\in \Z$;}\\
0 &\hbox{otherwise.}
\end{array}
\right. 
$$
Any object in $\Proj{R_\al}$ is a finite direct sum of indecomposable modules of the form $P_L\langle n\rangle$. 

For $\bi\in \words^\al$ and $M\in\Rep{R_\al}$, the {\em $\bi$-weight space} of $M$ is
$
M_\bi:=e(\bi)M.
$
We have a decomposition of (graded) vector spaces
$
M=\bigoplus_{\bi\in \words^\al}M_\bi.
$
We say that $\bi$ is a {\em weight of $M$} if $M_\bi\neq 0$.
Note from the relations that 
$
\psi_r M_\bi\subset M_{s_r \bi}.
$

We identify in a natural way: 
$$
[\Rep{R}]=\bigoplus_{\al\in Q_+}[\Rep{R_\al}],\quad [\Proj{R}]=\bigoplus_{\al\in Q_+}[\Proj{R_\al}].
$$
Given $\alpha, \beta \in Q_+$, we define the (graded) algebra 
 $
R_{\alpha,\beta} := R_\alpha \otimes 
R_\beta.$ 
Denote 
the (outer) tensor product of an $R_\alpha$-module $M$ and an $R_\beta$-module 
$N$ by $M \boxtimes N$. We identify the Grothendieck group $[\Proj{R_{\alpha,\beta}}]$ with
$[\Proj{R_\alpha}] \otimes [\Proj{R_\beta}]$
so that $[P \boxtimes Q]$ is identified with $[P] \otimes [Q]$. Similarly we identify $[\Rep{R_{\alpha,\beta}}]$ with
$[\Rep{R_\alpha}] \otimes [\Rep{R_\beta}]$.


\subsection{Pairings and dualities}\label{SSPD}
Recall the map $\tau$ from (\ref{star}). There is a duality  $\#$
on $\Proj{R_\alpha}$ mapping $P\in\Proj{R_\al}$
to $P^\# := \HOM_{R_\alpha}(P, R_\alpha)$ with the action 
$(xf)(p) = f(p) \tau(x)$ for $x\in R_\al,f\in \HOM_{R_\alpha}(P, R_\alpha), p\in P$. 
There is also a duality $\circledast$
on $\Rep{R_\alpha}$ mapping  $M\in\Rep{R_\al}$
to $M^\circledast := \HOM_\FF(M, \FF)$ with the action  $(xf)(m) = f(\tau(x)m)$ for $x\in R_\al,f\in \HOM_\FF(M, \FF), m\in M$.

Let $M$ be a left $R_\al$-module. Denote by $M^\tau$ the right $R_\al$-module with the action given by $mx=\tau(x)m$ for $x\in R_\al,m\in M$. Following \cite[(2.44)]{KL1}, define the {\em Khovanov-Lauda pairing} to be the $\Laurent$-linear pairing such that 
$$(.,.) : [\Proj{R_\al}]\times[\Rep{R_\al}] \rightarrow \Laurent,\quad ([P],[M]) := \qdim(P^\tau\otimes_{R_\al} M).$$ The relation between the natural pairing $\langle\cdot,\cdot\rangle$, the Khovanov-Lauda pairing $(\cdot,\cdot)$, and the dualities is as follows (cf. \cite[p.341]{KL1}):

\begin{Lemma}\label{LFirst}
Let $P\in\Proj{R_\al}$ and $M\in \Rep{R_\al}$. Then
$$
\overline{([P],[M^\circledast])}=\langle[P],[M]\rangle=([P^\#],[M]).
$$ 
\end{Lemma}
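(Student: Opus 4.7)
The plan is to derive both equalities from standard graded tensor-hom adjunctions, verifying that the various $\tau$-twists used to define $P^\#$, $M^\circledast$, and $M^\tau$ cancel out compatibly. Throughout, the key reductions exploit that $P$ is finitely generated projective, so by additivity and grading shift one may assume $P = R_\al e(\bi)\langle n\rangle$, and that $M$ is finite dimensional, so graded $\FF$-linear duals behave well.

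For the equality $\langle [P],[M]\rangle = ([P^\#],[M])$, I would first observe that viewing $P^\#$ as a right $R_\al$-module via $\tau$, i.e.\ as $(P^\#)^\tau$, recovers exactly $\HOM_{R_\al}(P,R_\al)$ with its natural right-module structure $(f\cdot x)(p)=f(p)x$: the two occurrences of $\tau$ (one in the definition of $P^\#$, one in passing to $(-)^\tau$) cancel since $\tau^2=\id$. The evaluation map
$$
\ev:\HOM_{R_\al}(P,R_\al)\otimes_{R_\al} M \longrightarrow \HOM_{R_\al}(P,M),\qquad f\otimes m\mapsto\bigl(p\mapsto f(p)m\bigr),
$$
is then a graded $\FF$-linear isomorphism for finitely generated projective $P$: it suffices to check on $P = R_\al e(\bi)$, where both sides are canonically identified with the graded vector space $e(\bi)M$. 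Taking graded dimensions yields the right-hand equality.

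For the equality $\overline{([P],[M^\circledast])}=\langle [P],[M]\rangle$, I would define
$$
\phi:P^\tau\otimes_{R_\al}M^\circledast\longrightarrow \HOM_{R_\al}(P,M)^*,\qquad \phi(p\otimes f)(g):=f(g(p)).
$$
Well-definedness over $R_\al$ is exactly the computation $f(g(\tau(x)p)) = f(\tau(x)g(p)) = (xf)(g(p))$, using $R_\al$-linearity of $g$ and the definition of the left action on $M^\circledast$. The same reduction to $P = R_\al e(\bi)$, combined with finite-dimensionality of $e(\bi)M$, shows $\phi$ is a graded isomorphism. A direct degree count, using $(M^\circledast)_n = (M_{-n})^*$ and that elements of $\HOM_{R_\al}(P,M)$ of degree $c$ shift degree by $c$, pairs $p\in P_a$ with $f\in (M^\circledast)_b$ against $g\in \HOM_{R_\al}(P,M)_{-a-b}$; together with $\qdim V^* = \overline{\qdim V}$ this gives $([P],[M^\circledast]) = \overline{\langle[P],[M]\rangle}$, which is equivalent to the claim after applying the bar-involution to both sides.

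There is no single hard step here: the lemma is essentially a formal consequence of graded adjunction. The only real care required is bookkeeping --- the fact that the left $R_\al$-actions on $P^\#$ and $M^\circledast$ are built using $\tau$ is precisely what makes them compatible with the right-twist $(-)^\tau$ appearing in the Khovanov--Lauda pairing, and the passage from the graded $\FF$-linear dual to the bar-involution on $\Laurent$ is what accounts for the overline in the first equality.
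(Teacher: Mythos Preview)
Your proof is correct and follows essentially the same route as the paper: both equalities are extracted from the standard graded tensor--hom adjunctions, with the $\tau$-twists cancelling as you describe. The only cosmetic difference is that for the first equality the paper dualises $P^\tau\otimes_{R_\al}M$ and identifies the result with $\HOM_{R_\al}(P,M^\circledast)$, then substitutes $M\mapsto M^\circledast$, whereas you go directly to an isomorphism $P^\tau\otimes_{R_\al}M^\circledast\cong\HOM_{R_\al}(P,M)^*$; these are the same adjunction read in two directions, and your explicit reduction to $P=R_\al e(\bi)$ makes the verification a little more concrete than the paper's one-line appeal.
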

\begin{proof}
Indeed, we have
\begin{align*}
\overline{([P],[M])}& = \qdim \HOM_\FF(P^\tau\otimes_{R_\al} M,\FF)
\\
&=\qdim \HOM_{R_\al}(P,M^\circledast)=\langle[P],[M^\circledast]\rangle,
\end{align*}
which implies the first equality. Moreover, 
\begin{align*}
([P^\#],[M])& = \qdim (P^\#)^\tau\otimes_{R_\al} M
\\
&=\qdim \Hom_{R_\al}(P,R_\al)\otimes_{R_\al} M
\\
&=\qdim \Hom_{R_\al}(P, M)=\langle[P],[M]\rangle,
\end{align*}
which gives the second equality. 
\end{proof}


\begin{Lemma}\label{LThird}
Let $L\in \Rep{R_\al}$ be irreducible. Then $P_L^\#\cong P_{L^\circledast}$.  
\end{Lemma}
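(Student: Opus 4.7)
The plan is to verify that $P_L^\#$ satisfies the characterizing property of the projective cover $P_{L^\circledast}$: namely, for every irreducible $S \in \Rep{R_\al}$,
$$\qdim\HOM_{R_\al}(P_L^\#, S) = \langle [P_L^\#],[S]\rangle = \begin{cases} q^{-n} & \text{if } S \cong L^\circledast\langle n\rangle, \\ 0 & \text{otherwise.}\end{cases}$$
Since $\#$ is a duality from $\Proj{R_\al}$ to itself, $P_L^\#$ is already known to lie in $\Proj{R_\al}$ and to be indecomposable, so matching this pairing property against all irreducibles will pin it down up to isomorphism.

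To compute $\langle [P_L^\#], [S]\rangle$, I would apply Lemma~\ref{LFirst} to $P = P_L$ and $M = S^\circledast$. The first equality gives
$$\overline{([P_L],[S^{\circledast\circledast}])} = \langle [P_L], [S^\circledast]\rangle,$$
and using $S^{\circledast\circledast} \cong S$ (which holds since $S$ is finite dimensional), this reads $\overline{([P_L],[S])} = \langle [P_L],[S^\circledast]\rangle$. Combining this with the second equality of the lemma applied to $P = P_L$ and $M = S$, namely $\langle [P_L],[S]\rangle = ([P_L^\#],[S])$, and then invoking the lemma once more with $P = P_L^\#$ and $M = S$, I obtain
$$\langle [P_L^\#], [S]\rangle = \overline{\langle [P_L],[S^\circledast]\rangle}.$$

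Now I plug in the defining property of $P_L$: the pairing $\langle [P_L], [S^\circledast]\rangle$ vanishes unless $S^\circledast \cong L\langle n\rangle$ for some $n \in \Z$, in which case it equals $q^{-n}$. Using the involutive property of $\circledast$, the condition $S^\circledast \cong L\langle n\rangle$ is equivalent to $S \cong L^\circledast\langle -n\rangle$, and substituting $m = -n$ yields $\langle [P_L],[S^\circledast]\rangle = q^{m}$ when $S \cong L^\circledast\langle m\rangle$, and $0$ otherwise. Applying the bar involution, which sends $q^m$ to $q^{-m}$, gives exactly the desired formula for $\langle [P_L^\#], [S]\rangle$. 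Hence $P_L^\#$ has the defining property of $P_{L^\circledast}$ and they are isomorphic.

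The main obstacle is bookkeeping rather than substance: one must be careful that the various grading shifts and the $\overline{\phantom{x}}$ involution line up, and one must invoke the reflexivity statements $P^{\#\#} \cong P$ on $\Proj{R_\al}$ and $M^{\circledast\circledast}\cong M$ on $\Rep{R_\al}$ to iterate Lemma~\ref{LFirst} cleanly. Once those are in place, the conclusion is forced by the fact (used implicitly throughout the section) that an indecomposable graded projective is determined up to isomorphism and grading shift by the values of $\langle \cdot, [S]\rangle$ on the irreducibles $S$.
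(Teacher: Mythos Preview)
Your argument is correct and follows essentially the same route as the paper: derive the identity $\langle [P^\#],[M]\rangle=\overline{\langle [P],[M^\circledast]\rangle}$ from Lemma~\ref{LFirst} and then match against the defining property of the projective cover. The paper compresses all of this into a single sentence, whereas you spell out the bookkeeping with the grading shifts explicitly; your invocation of the second equality with $M=S$ is slightly redundant (only the instance with $M=S^\circledast$, or alternatively the reflexivity $P^{\#\#}\cong P$, is actually needed), but the logic is sound.
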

\begin{proof}
By Lemma~\ref{LFirst}, we have 
$\langle P^\#,M\rangle=\overline{\langle P,M^\circledast\rangle}$ for any $P\in\Proj{R_\al}$ and $M\in\Rep{R_\al}$, which implies the result.
\end{proof}

It 
is pointed out in \cite[p. 342]{KL1} that for every irreducible graded $R_\al$-module there is a (unique)  choice of the grading shift which makes the module $\circledast$-self-dual:

\begin{Lemma}\label{LSelfD} \cite{KL1} 
For every irreducible graded $R_\al$-module $L$ there is a unique $m\in\Z$ such that  $(L\langle m\rangle)^\circledast\cong L\langle m\rangle$. 
\end{Lemma}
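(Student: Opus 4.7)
The plan has three stages: uniqueness, identifying $L^\circledast$ with a grading shift of $L$, and checking that this shift is even.

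\emph{Uniqueness.} If $m_1,m_2\in\Z$ both satisfy the condition, rearranging using the identity $(M\langle k\rangle)^\circledast=M^\circledast\langle -k\rangle$ gives $L^\circledast\cong L\langle 2m_1\rangle\cong L\langle 2m_2\rangle$. Since $L$ is nonzero and finite-dimensional, $L\langle k\rangle\cong L$ forces $k=0$, so $2m_1=2m_2$ and hence $m_1=m_2$.

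\emph{Existence, first step.} I would show that $L^\circledast\cong L\langle n\rangle$ for a unique $n\in\Z$. Because $\tau$ fixes every idempotent $e(\bi)$, we have $\dim L^\circledast_\bi=\dim L_\bi$ for all words $\bi\in\words^\al$, so $L$ and $L^\circledast$ have identical ungraded formal characters. Since $L^\circledast$ is irreducible (by contravariance of $\circledast$) and irreducibles of $R_\al$ are classified up to grading shift by their ungraded isomorphism class---which is determined by the formal character, as in \cite[\S2.5]{KL1}---it follows that $L^\circledast\cong L\langle n\rangle$.

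\emph{Existence, second step.} It remains to prove $n$ is even, after which $m:=-n/2\in\Z$ satisfies $(L\langle m\rangle)^\circledast=L^\circledast\langle -m\rangle\cong L\langle n-m\rangle=L\langle m\rangle$. I expect this parity step to be the main obstacle. My plan is to interpret the isomorphism $L\iso L^\circledast\langle -n\rangle$ as a nondegenerate $R_\al$-invariant bilinear form $B$ on $L$ of degree $-n$, and to use that its transpose must agree with $B$ up to a sign, since $\HOM_{R_\al}(L,L^\circledast)$ is one-dimensional in degree $n$. A careful analysis of the graded dimension polynomial of $L$, combined with the parity structure of degrees of homogeneous elements of $R_\al$, should then force $n\in 2\Z$, completing the proof.
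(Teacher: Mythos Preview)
The paper does not itself prove this lemma; it simply records the statement with a citation to \cite{KL1}. So there is no argument in the paper to compare against, and the question is whether your outline stands on its own.

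Your uniqueness step and the identification $L^\circledast\cong L\langle n\rangle$ are correct. The substance lies in your third step, and here the plan is not yet a proof. The bilinear-form symmetry you invoke (that $B^t=\pm B$ by Schur) does not by itself force $n\in 2\Z$: a nondegenerate $\pm$-symmetric homogeneous form of odd degree exists on an abstract graded space, simply pairing $L_a$ with $L_{n-a}\ne L_a$. Your phrase ``parity structure of degrees of homogeneous elements of $R_\al$'' points in the right direction but is too loose, since $R_\al$ itself has odd-degree elements (e.g.\ $\psi_re(\bi)$ when $i_r\cdot i_{r+1}$ is odd). The precise fact you need is that each \emph{diagonal} piece $e(\bi)R_\al e(\bi)$ is concentrated in even degrees: the $y_re(\bi)$ contribute $i_r\cdot i_r\in 2\Z$, and for $w$ with $w\bi=\bi$ one has $\deg(\psi_w e(\bi))=-\sum_{(a,b)\in\operatorname{Inv}(w)}i_a\cdot i_b$, which is even because same-colour inversions contribute even terms while for any two colours $j\ne k$ the number of inversions with $\{i_a,i_b\}=\{j,k\}$ is itself even (since $w$ preserves each colour class, a short symmetric-difference count shows this). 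Granting that, pick any nonzero homogeneous $v\in L_\bi$; then $L_\bi=e(\bi)R_\al e(\bi)\cdot v$ is supported in degrees of a single parity, and the relation $\overline{\qdim L_\bi}=q^n\qdim L_\bi$ forces $n$ to be even. The bilinear form is not needed.
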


If $L$ is an irreducible graded module with $L^\circledast\cong L$, then by Lemma~\ref{LThird}, we have $P_L^\#\cong P_L$. The classes $\{[L]\}$ of the self-dual irreducible $R_\al$-modules form an $\Laurent$-basis of $[\Rep{R_\al}]$, while the classes of the corresponding projective covers $\{[P_L]\}$ form a dual basis of $\Rep{R_\al}$ with respect to both natural and Khovanov-Lauda pairings.


\subsection{Induction and restriction}
Let $\al,\be\in Q_+$. 
There is an obvious (non-unital) algebra embedding
of $R_{\alpha,\beta}$
into the $R_{\alpha+\beta}$
mapping $e(\bi) \otimes e(\bj)$ to $e(\bi\bj)$.
The image of the identity
element of $R_{\alpha,\beta}$ under this map is 
$$
e_{\alpha,\beta} = \sum_{\bi \in \words^\alpha,\bj \in \words^\beta} e(\bi\bj).
$$
Consider the 
functors 
\begin{align*}
\Ind_{\alpha,\beta}^{\alpha+\beta} &:= R_{\alpha+\beta} e_{\alpha,\beta}
\otimes_{R_{\alpha,\beta}} ?:\Mod{R_{\alpha,\beta}} \rightarrow \Mod{R_{\alpha+\beta}},\\
\Res_{\alpha,\beta}^{\alpha+\beta} &:= e_{\alpha,\beta} R_{\alpha+\beta}
\otimes_{R_{\alpha+\beta}} ?:\Mod{R_{\alpha+\beta}}\rightarrow \Mod{R_{\alpha,\beta}}.
\end{align*}
For $\al,\be\in Q_+$, $M\in\Rep{R_\al}$ and $N\in \Rep{R_\beta}$, we sometimes denote
\begin{equation}\label{ECircle}
M\circ N:=\Ind_{\al,\be}^{\al+\be}(M\boxtimes N).
\end{equation}

Note $\Res_{\alpha,\beta}^{\alpha+\beta}$ is just left multiplication by
the idempotent $e_{\alpha,\beta}$, so it is exact and sends finite dimensional modules to
finite dimensional modules. 
By \cite[Proposition 2.16]{KL1},
$e_{\alpha,\beta} R_{\alpha+\beta}$ is a graded free left $R_{\alpha,\beta}$-module of finite rank,
so $\Res_{\alpha,\beta}^{\alpha+\beta}$ also sends finitely generated projectives to finitely generated projectives.
Similarly, $R_{\alpha+\beta} e_{\alpha,\beta}$ is a graded 
free right $R_{\alpha, \beta}$-module of finite rank, so
$\Ind_{\alpha,\beta}^{\alpha+\beta}$ is exact and sends finite dimensional modules to finite dimensional modules.
The functor $\Ind_{\alpha,\beta}^{\alpha+\beta}$ is left adjoint to $\Res_{\alpha,\beta}^{\alpha+\beta}$, and it sends finitely generated projectives to finitely generated projectives.

Since the functors of induction are exact, by taking direct sum, they define products 
on the Grothendieck groups $[\Rep{R}]$ and $[\Proj{R}]$. For reasons, which will become clear in section~\ref{SSCat}, we use use different notations for these products on $[\Rep{R}]$ and $[\Proj{R}]$: if $M,N\in\Rep{R}$, we write $[M]\circ[N]=[M\circ N]$; if $M,N\in\Proj{R}$, we write $[M][N]=[M\circ N]$. 
Similarly the functors of restriction define coproducts $\Delta$ and $r$  on $[\Rep{R}]$ and $[\Proj{R}]$, respectively. These products and coproducts make $[\Rep{R}]$ and $[\Proj{R}]$ into twisted unital and counital bialgebras \cite[Proposition 3.2]{KL1}. The next result, which has the same proof as \cite[Proposition 3.3]{KL1}, shows that these bialgebras are dual to each other with respect to the Khovanov-Lauda pairing. 

\begin{Lemma}\label{LProdCoprod}
For $x,x'\in[\Proj{R}]$ and $y,y'\in[\Rep{R}]$, we have 
$$
(x,y\circ y')=(r(x),y\otimes y'),\quad (xx',y)=(x\otimes x',\Delta(y)).
$$
\end{Lemma}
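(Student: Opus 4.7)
My plan is to prove both identities by exhibiting explicit isomorphisms of graded vector spaces at the module level, and then taking graded dimensions. The key ingredient is the observation that $\tau(e_{\alpha,\beta}) = e_{\alpha,\beta}$, which follows because $\tau$ is the identity on generators and $e_{\alpha,\beta}$ is a sum of the idempotents $e(\bi\bj)$. A second ingredient is that $\tau$ restricts to an anti-involution of the subalgebra $R_{\alpha,\beta} \subset R_{\alpha+\beta}$, since the generators of $R_{\alpha,\beta}$ map to $\tau$-fixed elements of $R_{\alpha+\beta}$.

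For the first identity, by bilinearity it suffices to fix $P \in \Proj{R_{\alpha+\beta}}$, $M \in \Rep{R_\alpha}$, $N \in \Rep{R_\beta}$ and exhibit a degree-preserving isomorphism
$$P^\tau \otimes_{R_{\alpha+\beta}} \Ind_{\alpha,\beta}^{\alpha+\beta}(M \boxtimes N) \;\cong\; (\Res_{\alpha,\beta}^{\alpha+\beta} P)^\tau \otimes_{R_{\alpha,\beta}} (M \boxtimes N).$$
Unfolding $\Ind(M \boxtimes N) = R_{\alpha+\beta} e_{\alpha,\beta} \otimes_{R_{\alpha,\beta}} (M \boxtimes N)$ and using associativity of tensor product collapses the left-hand side to $(P^\tau e_{\alpha,\beta}) \otimes_{R_{\alpha,\beta}} (M \boxtimes N)$. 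Because $\tau(e_{\alpha,\beta}) = e_{\alpha,\beta}$, right multiplication by $e_{\alpha,\beta}$ on $P^\tau$ is identified with the restriction functor applied to $P$, giving $P^\tau e_{\alpha,\beta} = (e_{\alpha,\beta} P)^\tau = (\Res P)^\tau$ as right $R_{\alpha,\beta}$-modules. Comparing graded dimensions yields the claim.

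For the second identity, the analogous reduction is to show for $P \in \Proj{R_\alpha}$, $Q \in \Proj{R_\beta}$, $M \in \Rep{R_{\alpha+\beta}}$ that
$$\bigl(\Ind(P \boxtimes Q)\bigr)^\tau \otimes_{R_{\alpha+\beta}} M \;\cong\; (P \boxtimes Q)^\tau \otimes_{R_{\alpha,\beta}} \Res M.$$
Here associativity does not collapse things directly because the inner tensor product sits inside the $\tau$-twist, so instead I would construct explicit mutually inverse $\FF$-linear maps on generators:
$$\Phi \colon (re_{\alpha,\beta} \otimes x) \otimes m \;\longmapsto\; x \otimes e_{\alpha,\beta}\tau(r)m, \qquad \Psi \colon x \otimes e_{\alpha,\beta} m \;\longmapsto\; (e_{\alpha,\beta} \otimes x) \otimes e_{\alpha,\beta} m,$$
and check well-definedness against the three sets of tensor relations (the $R_{\alpha,\beta}$-relation inside $\Ind$, the $\tau$-twisted right $R_{\alpha+\beta}$-action, and the $R_{\alpha,\beta}$-action on $e_{\alpha,\beta} M$). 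All three checks reduce via $\tau(e_{\alpha,\beta}) = e_{\alpha,\beta}$, the stability of $R_{\alpha,\beta}$ under $\tau$, and the involutivity $\tau^2 = \id$.

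The only real subtlety is the middle compatibility in the second identity: one must verify that transporting a general $r \in R_{\alpha+\beta}$ across the inner tensor via $\tau$ lands in the image of $R_{\alpha+\beta} e_{\alpha,\beta}$ absorbed into the $e_{\alpha,\beta}M$ factor. This works precisely because $\tau(re_{\alpha,\beta}) = e_{\alpha,\beta}\tau(r)$, so the required $e_{\alpha,\beta}$ is produced automatically. Beyond that bookkeeping there is no serious obstacle, and taking graded dimensions of both sides yields the stated duality.
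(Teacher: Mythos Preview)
Your argument is correct and is essentially the standard proof that the paper is invoking when it says ``has the same proof as \cite[Proposition~3.3]{KL1}''; the paper gives no independent argument. Your key observations---that $\tau(e_{\alpha,\beta})=e_{\alpha,\beta}$, that $\tau$ restricts to the subalgebra $R_{\alpha,\beta}$, and that the induction/restriction functors are realized by tensoring with the bimodule $R_{\alpha+\beta}e_{\alpha,\beta}$---are exactly what is used there, and the explicit maps $\Phi,\Psi$ you write down are the canonical ones.
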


\subsection{Divided power functors}
Let $i\in I$ and $n\in\Z_{> 0}$. 
As explained in \cite[$\S$2.2]{KL1}, 
the algebra
$R_{n i}$ 
has a representation on the polynomials $\FF[y_1,\dots,y_n]$
such that each $y_t$ acts as multiplication by $y_t$ and each $\psi_t$ acts
as the divided difference operator 
$
\partial_t: f \mapsto \frac{{^{s_t}} f - f}{y_{t}-y_{t+1}}.
$
Let $P(i^{(n)})$ denote this representation of $R_{ni}$
viewed as a 
graded $R_{ni}$-module with grading defined by
$$
\deg(y_1^{m_1} \cdots y_n^{m_n}) := (i\cdot i)(m_1+\cdots+m_n - n(n-1)/4).
$$
By \cite[$\S$2.2]{KL1}, the left regular $R_{ni}$-module decomposes as 
$
P(i^n) \cong [n]^!_i \cdot P(i^{(n)})$. In particular, $P(i^{(n)})$ is projective.
Set
\begin{align*}
\theta_{i}^{(n)}&:= 
\Ind_{\alpha,n i}^{\alpha+ni} (? \boxtimes P(i^{(n)})):\Mod{R_\alpha} \rightarrow \Mod{R_{\alpha+ni}},\\
(\theta_{i}^*)^{(n)}&:= 
\HOM_{R'_{n i}}(P(i^{(n)}), ?): \Mod{R_{\alpha+ni}} \rightarrow \Mod{R_{\alpha}},
\end{align*}
where $R'_{ni} := 1 \otimes R_{ni} \subseteq R_{\alpha,ni}$.
Both functors are exact, 
$\theta_i^{(n)}$ sends finitely generated projective modules to 
finitely generated projective modules,
and $(\theta_i^*)^{(n)}$ sends finite dimensional modules to finite dimensional modules.
So the functors induce $\Laurent$-module maps on the corresponding Grothendieck groups: 
$$\theta_i^{(n)}:[\Proj{R_\alpha}] \rightarrow [\Proj{R_{\alpha+ni}}],\quad
(\theta_{i}^*)^{(n)}:[\Rep{R_{\alpha+ni}}] \rightarrow [\Rep{R_{\alpha}}].
$$ 
By taking direct sums over all $\al\in Q_+$, we have $\Laurent$-module homomorphisms
$$\theta_i^{(n)}:[\Proj{R}] \rightarrow [\Proj{R}],\quad
(\theta_{i}^*)^{(n)}:[\Rep{R}] \rightarrow [\Rep{R}].
$$

\begin{Lemma}\label{LThetaTheta*}
For $x\in [\Proj{R}]$, $y\in [\Rep{R}]$, $i\in I$ and $n\in \Z_{\geq 0}$, we have 
$(\theta_i^{(n)} x,y)=(x,(\theta_i^*)^{(n)} y).
$
\end{Lemma}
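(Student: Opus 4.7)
The plan is to reduce the identity to Lemma~\ref{LProdCoprod}, which encodes the adjointness of the product on $[\Proj{R}]$ and the coproduct on $[\Rep{R}]$ with respect to the Khovanov--Lauda pairing, and then to identify $(\theta_i^*)^{(n)}$ explicitly as a partial contraction against $[P(i^{(n)})]$.

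First I observe that on $[\Proj{R}]$ the functor $\theta_i^{(n)}$ is nothing but right multiplication by $[P(i^{(n)})]\in[\Proj{R_{ni}}]$, since by construction $\theta_i^{(n)}P=\Ind_{\alpha,ni}^{\alpha+ni}(P\boxtimes P(i^{(n)}))$. Applying Lemma~\ref{LProdCoprod} with $x=[P]$ and $x'=[P(i^{(n)})]$ gives
\[
(\theta_i^{(n)}[P],[M]) \;=\; ([P]\otimes[P(i^{(n)})],\,\Delta[M]).
\]
Because $[P(i^{(n)})]$ is supported in the block $R_{ni}$, only the summand of $\Delta[M]$ lying in $[\Rep{R_\alpha}]\otimes[\Rep{R_{ni}}]$ pairs nontrivially, and that summand is $[\Res_{\alpha,ni}^{\alpha+ni}M]$. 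So the right-hand side equals $([P]\otimes[P(i^{(n)})],[\Res_{\alpha,ni}^{\alpha+ni}M])$.

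Next I identify the class $[(\theta_i^*)^{(n)}M]\in[\Rep{R_\alpha}]$. Writing $[\Res_{\alpha,ni}^{\alpha+ni}M]=\sum_k[N_k]\otimes[L_k]$ in $[\Rep{R_\alpha}]\otimes[\Rep{R_{ni}}]$, and using that for $N_1\in\Rep{R_\alpha}$ (finite dimensional) and $N_2\in\Rep{R_{ni}}$ one has a natural isomorphism
\[
\HOM_{R'_{ni}}(P(i^{(n)}),\,N_1\boxtimes N_2)\;\cong\;N_1\otimes \HOM_{R_{ni}}(P(i^{(n)}),N_2)
\]
of graded $R_\alpha$-modules (the $R_\alpha$-action commutes with taking $R'_{ni}$-homs), and the fact that $\HOM_{R_{ni}}(P(i^{(n)}),-)$ is exact (as $P(i^{(n)})$ is finitely generated projective), we get
\[
[(\theta_i^*)^{(n)}M]\;=\;\sum_k \langle [P(i^{(n)})],[L_k]\rangle\,[N_k].
\]

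Finally, to convert $\langle\cdot,\cdot\rangle$ into the Khovanov--Lauda pairing I invoke Lemma~\ref{LFirst}, which gives $\langle[Q],[L]\rangle=([Q^\#],[L])$, together with the self-duality $P(i^{(n)})^\#\cong P(i^{(n)})$. This lets me rewrite $\langle[P(i^{(n)})],[L_k]\rangle=([P(i^{(n)})],[L_k])$, so that
\[
([P],(\theta_i^*)^{(n)}[M])\;=\;\sum_k([P],[N_k])\,([P(i^{(n)})],[L_k])\;=\;([P]\otimes[P(i^{(n)})],[\Res_{\alpha,ni}^{\alpha+ni}M]),
\]
matching the earlier expression for $(\theta_i^{(n)}[P],[M])$. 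The main obstacle is the self-duality $P(i^{(n)})^\#\cong P(i^{(n)})$: without it one only obtains the weaker identity $(\theta_i^{(n)}[P],[M])=([P^{\#\#}],(\theta_i^*)^{(n)}[M])$ up to a $\#$-twist. This self-duality can be established either by direct inspection of the polynomial realization of $P(i^{(n)})$ (using that $\tau$ fixes $e(\bi)$, $y_t$ and $\psi_t$), or by identifying $P(i^{(n)})$ as the projective cover of the self-dual irreducible $L(i^{(n)})$ and applying Lemma~\ref{LThird}.
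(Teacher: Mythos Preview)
Your proposal is correct and follows essentially the same approach as the paper's proof: reduce to Lemma~\ref{LProdCoprod}, and then use that $P(i^{(n)})$ is the projective cover of the unique (self-dual) irreducible $R_{ni}$-module, hence is $\#$-self-dual by Lemma~\ref{LThird}. Your write-up simply spells out in detail what the paper compresses into ``follows easily from the definitions and Lemma~\ref{LProdCoprod}, if we take into account the fact that $P(i^{(n)})$ is the projective cover of the only irreducible $R_{ni}$-module.''
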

\begin{proof}
This follows easily from the definitions and Lemma~\ref{LProdCoprod}, if we take into account the fact that $P(i^{(n)})$ is the projective cover of the only irreducible $R_{ni}$-module, cf. \cite[\S 2.2(3) and Proposition 3.11]{KL1}. 
\end{proof}

\section{Quantum groups and their categorifications}


\subsection{\boldmath The algebras $\mathbf f$ and $\mathbf{'f}$}
Let $\mathbf f$ and $\mathbf{'f}$ denote the Lusztig's algebras from \cite[$\S$1.2]{Lubook}
attached to the Cartan datum $(I,\cdot)$ over the field $\Q(q)$. We adopt the conventions of \cite[$\S$3.1]{KL1}, so our $q$ is Lusztig's $v^{-1}$.
To be more precise, $\mathbf{'f}$ is the free $\Q(q)$-algebra with generators $\theta_i'$ for $i\in I$. A $Q_+$-grading $\mathbf{'f}=\oplus_{\al\in Q_+} \mathbf{'f}_\al$ is determined by assigning the degree $i$ to the generator $\theta_i'$ for each $i\in I$. If $x\in \mathbf{'f}_\al$ we write $|x|=\al$. 

We consider $\mathbf{'f}\otimes \mathbf{'f}$  as a $\Q(q)$-algebra with {\em twisted}\, multiplication 
$(x\otimes y)(z\otimes w)=q^{-|y|\cdot |z|}xz\otimes yw$ for homogeneous $x,y,z,w\in \mathbf{'f}$. 
Let $r:\mathbf{'f}\to \mathbf{'f}\otimes\mathbf{'f}$ be an algebra homomorphism determined by $r(\theta_i')=\theta_i'\otimes 1+1\otimes \theta_i'$ for all $i\in I$. It is proved in \cite[1.2.2]{Lubook} that $r$ is coassociative. The twisted multiplication and the coproduct $r$ make $\mathbf{'f}$ into a {\em twisted bialgebra}. 
Let\,\, 
$\bar{ }\,:\mathbf{'f}\to \mathbf{'f}$ be the $\Q$-algebra homomorphism such that
 $\overline{p\theta_i'}=\bar p\theta_i'$ for all $p\in \Q(q)$ and $i\in I$. 
 
 Recall that $\Laurent=\Z[q,q^{-1}]$. Denote by 
 $\mathbf{'f_\Laurent}$ the $\Laurent$-subalgebra of $\mathbf{'f}$ generated by $\{(\theta_i')^{(n)}\mid i\in I, n\in \Z_{\geq 0}\}$. We have $\mathbf{'f_\Laurent}=\oplus_{\al\in Q_+}(\mathbf{'f}_\Laurent)_\al$, where $(\mathbf{'f}_\Laurent)_\al:=\mathbf{'f}_\Laurent\cap\mathbf{'f}_\al$. On restriction, we have well defined maps $r:\mathbf{'f_{\Laurent}}\to\mathbf{'f_{\Laurent}}\otimes\mathbf{'f_{\Laurent}}$ and\,\, $\bar{ }\,:\mathbf{'f}_{\Laurent}\to\mathbf{'f}_{\Laurent}$. Given $\bi=(j_1)^{r_1}\dots (j_m)^{r_m}\in \words$, with $j_n\neq j_{n+1}$ for $n=1,\dots,m-1$, denote  
 $
 \theta'_\bi:=(\theta'_{j_1})^{(r_1)}\dots(\theta'_{j_m})^{(r_m)}\in \mathbf{'f_{\Laurent}}.
 $ 
 Then $\{\theta'_\bi\mid \bi\in\words\}$ is an $\Laurent$-basis of $\mathbf{'f_{\Laurent}}$. 
 
Let $\mathbf f:=\mathbf{'f}/\mathcal{I}$, where $\mathcal{I}$ is the ideal of $\mathbf{'f}$ generated by the elements  
\begin{equation}\label{qserre}
\sum_{m=0}^{1-a_{ij}}(-1)^m
(\theta_i')^{(m)}\theta_j'(\theta_i')^{(1-a_{ij}-m)}
\end{equation}
for all $i\neq j$ in $I$. Set $\theta_i:=\theta_i'+\mathcal{I}\in {\mathbf f}$ for all $i\in I$.  The $Q_+$-grading on $\mathbf{'f}$ yields the $Q_+$ grading on $\mathbf{f}$ with $|\theta_i|=i$. The maps $r$ and\,\, $\bar{}$\,\, yield maps $r:\mathbf{f}\to\mathbf{f}\otimes\mathbf{f}$ and\,\, $\bar{}\,:\mathbf{f}\to\mathbf{f}$. If $x\in\mathbf f_\gamma$, we can write
$$
r(x)=\sum_{\al,\be\in Q_+,\ \al+\be=\gamma}r_{\al,\be}(x)
$$
for $r_{\al,\be}(x)\in\mathbf f_\al\otimes \mathbf f_\beta$.

Let $\mathbf{f}_{\Laurent}$
be the $\Laurent$-subalgebra of $\mathbf f$ generated by all 
$\theta_i^{(n)}$. 
The map $r$ restricts to a well-defined comultiplication
$r:\mathbf{f}_{\Laurent} \rightarrow \mathbf{f}_{\Laurent} \otimes \mathbf{f}_{\Laurent}$, and the bar-involution induces an involution $ \bar{ }\,:\mathbf{f}_{\Laurent}\to\mathbf{f}_{\Laurent}$.
Finally, $\mathbf{f}_{\Laurent}$ is a $Q_+$-graded $\Laurent$-algebra with $(\mathbf{f}_{\Laurent})_\al=\mathbf{f}_{\Laurent}\cap \mathbf{f}_\al$.

\subsection{\boldmath The algebras $\mathbf{'f}^*$ and $\mathbf{f}^*$}
Consider the graded duals 
$\mathbf{'f}^*:=\oplus_{\al\in Q_+}(\mathbf{'f})_\al^*$ and  $\mathbf{f}^*:=\oplus_{\al\in Q_+}(\mathbf{f})_\al^*.
$ 
Let $\iota: \mathbf{f}^*\into \mathbf{'f}^*$ be the map dual to the natural  quotient map $\mathbf{'f}\onto \mathbf{f}$. 
We interpret words $\bi \in\words$ as elements of $\mathbf{'f}^*$ so that 
$
\bi(\theta'_{j_1}\dots\theta'_{j_d})=\de_{\bi,\bj}, 
$ 
for $\bj=(j_1,\dots,j_d)$. Then $\words$ is a basis of $\mathbf{'f}^*$. 

The dual map $r^*:\mathbf{'f}^*\otimes \mathbf{'f}^*\to \mathbf{'f}^*$ is an associative product on $\mathbf{'f}^*$ which is denoted $\circ$ and called the {\em quantum shuffle product}. 
Let $\bi=(i_1,\dots,i_d)$ and $\bj=(i_{d+1},\dots,i_{d+f})$ be two elements of $\words$. 
 It is easy to check that
\begin{equation}\label{EQSh}
\bi\circ\bj:=\sum q^{-e(\sigma)}(i_{\sigma(1)},\dots,i_{\sigma(d+f)}),
\end{equation}
where the sum is over all $\sigma\in S_{d+f}$ such that $\sigma^{-1}(1)<\dots<\sigma^{-1}(d)$ and $\sigma^{-1}(d+1)<\dots<\sigma^{-1}(d+f)$ (i.e. minimal length coset representatives of $S_d\times S_f$ in $S_{d+f}$), and 
\begin{equation*}
e(\sigma):=\sum_{k\leq d<m,\ \sigma^{-1}(k)>\sigma^{-1}(m)} i_{\si(k)}\cdot i_{\sigma(m)}.
\end{equation*}
For example, in type $A_2$ we have $(1)\circ (2)=(1,2)+q(2,1)$ and $(1)\circ (1)=(1+q^{-2})(1,1)$.

The dual map to the product map $m:\mathbf{'f}\otimes \mathbf{'f}\to \mathbf{'f}$ is the map
$$
\Delta:\mathbf{'f}^*\to \mathbf{'f}^*\otimes \mathbf{'f}^*,\ \bi\mapsto \sum_{\bj,\bk\in\words,\ \bi=\bj\bk}\bj\otimes \bk \qquad(\bi\in\words).
$$
In this way $\mathbf{'f}^*$ becomes a twisted bialgebra with twisted product 
 on $\mathbf{'f}^*\otimes \mathbf{'f}^*$ given by $(x\otimes y)\circ(z\otimes w)=q^{-|y|\cdot|z|}(x\circ z)\otimes (y\circ w)$ for homogeneous $x,y,z,w\in  \mathbf{'f}^*$. 
The dual map to the map\,\, $\bar{ }:\,  \mathbf{'f}\to \mathbf{'f}$ is the map\,\, $\bar{ }\,:  \mathbf{'f}^*\to \mathbf{'f}^*$, which satisfies
$\overline{p\bi}=\bar p\bi$ for all $p\in\Q(q)$ and $\bi\in\words$.

Note that $\mathbf{f}^*$ is a bar-invariant subalgebra of $\mathbf{'f}^*$ (with respect to the $\circ $-product), and $\De(\mathbf{f}^*)\subset \mathbf{f}^*\otimes\mathbf{f}^*$. If $x\in {\mathbf f}^*_\gamma$ for some $\ga\in Q_+$, we can write
$$
\De(x)=\sum_{\al,\be\in Q_+,\ \al+\be=\gamma}\De_{\al,\be}(x)\qquad\qquad\qquad(\De_{\al,\be}(x)\in\mathbf{f}^*_\al\otimes \mathbf{f}^*_\beta).
$$

Now pass to $\Laurent$-forms. Let 
$\mathbf{'f}_\Laurent^*:=\{x\in\mathbf{'f}^*\mid x(\mathbf{'f}_\Laurent)\subset\Laurent\}$ and $\mathbf{f}_\Laurent^*:=\{x\in\mathbf{f}^*\mid x(\mathbf{f}_\Laurent)\subset\Laurent\}.$ 
As an $\Laurent$-module, $\mathbf{'f}_\Laurent^*$ is free with basis 
$\{[\bi]!\,\bi\mid\bi\in \words\}.$ 
The embedding $\iota:\mathbf{f}^*\hookrightarrow \mathbf{'f}^*$ restricts to the embedding 
$
\iota:\mathbf{f}_\Laurent^*\hookrightarrow \mathbf{'f}_\Laurent^*
$, 
which we always use to identify $\mathbf{f}_\Laurent^*$ with an $\Laurent$-submodule of $\mathbf{'f}_\Laurent^*$. The quantum shuffle product, the coproduct $\De$, and the bar-involution restrict to define the corresponding notions on $\mathbf{'f}_\Laurent^*$ and $\mathbf{f}_\Laurent^*$. 

Let $i\in I$. Denote by 
$
\theta_i^*:\mathbf{'f}^*\to \mathbf{'f}^*
$ the dual map to the map $\mathbf{f}\to \mathbf{f},\ x\mapsto x\theta_i'.$
For $\bj=(j_1,\dots,j_d)\in\words$, we have
\begin{equation*}\label{ECut}
\theta_i^*(\bj)=
\left\{
\begin{array}{ll}
(j_1,\dots,j_{d-1}) &\hbox{if $j_d=i$;}\\
0 &\hbox{otherwise.}
\end{array}
\right.
\end{equation*}
It is clear that $\theta_i^*$ preserves $\mathbf{'f}_\Laurent^*$ and $\mathbf{f}_\Laurent^*$. Moreover, the divided power $(\theta_i^*)^{(n)}$ of $\theta_i^*$ is dual to the map $\mathbf{f}_\Laurent\to \mathbf{f}_\Laurent,\ x\mapsto x\theta_i^{(n)}$, and so it also preserves $\mathbf{f}_\Laurent^*$.

\begin{Lemma}\label{LQS}
Let $i\neq j\in I$. 
\begin{enumerate}
\item[{\rm (i)}] The map $\theta_i^*:\mathbf{'f}^*\to \mathbf{'f}^*$ is a $q$-derivation with respect to the shuffle product in the following sense: $\theta_i^*(\bj\circ \bk)=q^{-(i\cdot|\bk|)}\theta_i^*(\bj)\circ \bk+\bj\circ \theta_i^*(\bk)$ for all  $\bj,\bk\in\words$. 
\item[{\rm (ii)}]  On restriction to ${\bf f}^*_\Laurent$, the maps $\theta_i^*$ and $\theta_j^*$ satisfy quantum Serre relations, i.e.
$
\sum_{m=0}^{1-a_{ij}}(-1)^m
(\theta_i^*)^{(m)}\theta_j^*(\theta_i^*)^{(1-a_{ij}-m)}=0.
$
\end{enumerate}
\end{Lemma}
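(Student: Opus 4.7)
The plan is to prove (i) by a direct combinatorial computation using the quantum shuffle formula (\ref{EQSh}), and to prove (ii) independently by invoking the duality between $\mathbf f^*_\Laurent$ and $\mathbf f_\Laurent$ together with the definition $\mathbf f=\mathbf{'f}/\mathcal I$.

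For (i), I would fix $\bj=(i_1,\ldots,i_d)$ and $\bk=(i_{d+1},\ldots,i_{d+f})$ and apply $\theta_i^*$ termwise to the shuffle expansion. The key combinatorial input is that the shuffle conditions $\sigma^{-1}(1)<\cdots<\sigma^{-1}(d)$ and $\sigma^{-1}(d+1)<\cdots<\sigma^{-1}(d+f)$ force $\sigma(d+f)\in\{d,d+f\}$: the final position of the shuffled word must be occupied by the last letter of one of the two input words, which partitions the relevant shuffles into two classes. In the class $\sigma(d+f)=d+f$, the terminal letter is $i_{d+f}$; restricting $\sigma$ to the first $d+f-1$ positions produces all shuffles of $\bj$ with the truncation $(i_{d+1},\ldots,i_{d+f-1})$, and the exponent $e(\sigma)$ coincides with that of the restricted shuffle because no pair $(k,d+f)$ with $k\le d$ can be inversive (as $\sigma^{-1}(d+f)=d+f$ is maximal). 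Hence this class contributes exactly $\bj\circ\theta_i^*(\bk)$.

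In the class $\sigma(d+f)=d$, the terminal letter is $i_d$, and the restriction to the first $d+f-1$ positions gives shuffles of $(i_1,\ldots,i_{d-1})$ with $\bk$. Here every pair $(d,m)$ with $m>d$ is inversive, since $\sigma^{-1}(d)=d+f$ exceeds $\sigma^{-1}(m)$ for all such $m$; these $f$ pairs contribute $\sum_{m>d}i_d\cdot i_m=i\cdot|\bk|$ to $e(\sigma)$, while the remaining pairs $(k,m)$ with $k<d<m$ reproduce the exponent of the restricted shuffle. Pulling out the constant $q^{-i\cdot|\bk|}$ produces the contribution $q^{-i\cdot|\bk|}\,\theta_i^*(\bj)\circ\bk$, and summing the two classes yields the identity in (i).

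For (ii), by construction $(\theta_i^*)^{(n)}$ is the $\Laurent$-dual of right multiplication by $\theta_i^{(n)}$ on $\mathbf f_\Laurent$, and $\theta_j^*$ is dual to right multiplication by $\theta_j$; these operators all preserve $\mathbf f^*_\Laurent$ because $\mathcal I$ is a two-sided ideal of $\mathbf{'f}$. Iterating the adjunction $\langle\theta_i^*A,B\rangle=\langle A,B\theta_i'\rangle$, for every $A\in\mathbf f^*_\Laurent$ and $B\in\mathbf f_\Laurent$ we obtain
$$
\Bigl\langle\sum_{m=0}^{1-a_{ij}}(-1)^m(\theta_i^*)^{(m)}\theta_j^*(\theta_i^*)^{(1-a_{ij}-m)}A,\,B\Bigr\rangle=\Bigl\langle A,\,B\cdot\sum_{m=0}^{1-a_{ij}}(-1)^m\theta_i^{(m)}\theta_j\theta_i^{(1-a_{ij}-m)}\Bigr\rangle,
$$
and the inner sum on the right vanishes in $\mathbf f$ by the very definition of $\mathcal I$, proving (ii). The main obstacle is the $q$-bookkeeping in (i): one has to verify that in the first class the inversions are in bijection with those of the restricted shuffle, and that in the second class precisely the $f$ new inversions $(d,m)$ arise, contributing the announced $i\cdot|\bk|$. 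The argument for (ii) is essentially formal once the duality has been spelled out.
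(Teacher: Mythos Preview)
Your proposal is correct and essentially unpacks what the paper means by ``follows from definitions'': the paper gives no argument beyond citing Leclerc, whereas you carry out the direct shuffle computation for (i) and the duality argument for (ii). The bijection $\sigma(d+f)\in\{d,d+f\}$, the identification of the extra $f$ inversions in the second class, and the adjunction yielding the Serre element on the $\mathbf f_\Laurent$ side are all correct; this is exactly the content the paper is pointing to.
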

\begin{proof}
Follows from definitions, cf. \cite[(4) and Lemma 3]{Lec}. 
\end{proof}

\subsection{\boldmath Categorification of $\mathbf f_\Laurent$ and $\mathbf f_\Laurent^*$}\label{SSCat}
Now we can state the fundamental categorification theorem 
proved in \cite[\S 3]{KL1}.

\begin{Theorem}[Khovanov-Lauda]\label{klthm}
There is a unique $\Laurent$-linear isomorphism
$
\gamma:\mathbf f_\Laurent \stackrel{\sim}{\rightarrow} [\Proj{R}]
$
such that 
$1 \mapsto [R_0]$ (the class of the left
regular representation of
the trivial algebra $R_0\cong \FF$) 
and 
$\gamma( x \theta_i^{(n)}) = 
\theta_i^{(n)}(\gamma(x))$ 
for all $x \in \mathbf f_\Laurent$,  $i \in I$, and $n \geq 1$. 
Under this isomorphism:
\begin{itemize}
\item[(1)] $\gamma\big((\mathbf f_\Laurent)_\al\big)=[\Proj{R_\al}]$;
\item[(2)] the 
multiplication $(\mathbf f_\Laurent)_\alpha \otimes (\mathbf f_\Laurent)_\beta
\rightarrow  (\mathbf f_\Laurent)_{\alpha+\beta}$ 
corresponds to the product on $[\Proj{R}]$ induced by the exact functor 
$\Ind_{\alpha,\beta}^{\alpha+\beta}$;
\item[(3)] the comultiplication $r_{\alpha,\beta}:(\mathbf f_\Laurent)_{\alpha+\beta} 
\rightarrow (\mathbf f_\Laurent)_\alpha \otimes (\mathbf f_\Laurent)_\beta$
corresponds to the coproduct on $[\Proj{R}]$ induced by the 
exact functor
$\Res_{\alpha,\beta}^{\alpha+\beta}$;
\item[(4)] the bar-involution on $\mathbf f_{\Laurent}$ 
corresponds to the
anti-linear involution induced by the duality $\#$.
\end{itemize}
\end{Theorem}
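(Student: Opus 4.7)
The plan is to follow the approach of Khovanov-Lauda in \cite[\S 3]{KL1}. First I would build a candidate map at the free level: define an $\Laurent$-linear homomorphism $\tilde\gamma : \mathbf{'f}_\Laurent \to [\Proj R]$ on the basis $\{\theta'_\bi\}$ by
$$
\tilde\gamma\big((\theta'_{j_1})^{(r_1)} \cdots (\theta'_{j_m})^{(r_m)}\big) := \big[P(j_1^{(r_1)}) \circ \cdots \circ P(j_m^{(r_m)})\big].
$$
Associativity of $\Ind_{\alpha,\beta}^{\alpha+\beta}$, which follows from the obvious nesting of non-unital embeddings of the subalgebras $R_{\alpha,\beta}$, makes $\tilde\gamma$ an $\Laurent$-algebra homomorphism respecting the $Q_+$-grading, and by construction $\tilde\gamma(x\theta_i^{\prime(n)}) = \theta_i^{(n)}(\tilde\gamma(x))$.

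The crux is descent: I would need to show that the quantum Serre expressions (\ref{qserre}) lie in $\ker\tilde\gamma$, so that $\tilde\gamma$ factors through $\mathbf{f}_\Laurent$. This is the main technical obstacle. Concretely, one must establish the identity
$$
\sum_{m=0}^{1-a_{ij}}(-1)^m \big[P(i^{(m)}) \circ P(j) \circ P(i^{(1-a_{ij}-m)})\big]=0
$$
in $[\Proj{R_{(1-a_{ij})i+j}}]$, by producing explicit short exact sequences (or direct-sum decompositions) of the induced projectives. The calculation is sensitive to the precise polynomials $Q_{ij}(u,v)$ chosen in (\ref{EQIJ}) through the defining relations (\ref{R4}) and (\ref{R7}); this is carried out in \cite[\S 3]{KL1}. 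Once done, we obtain $\gamma : \mathbf{f}_\Laurent \to [\Proj R]$ together with properties (1) and (2).

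For (3), I would use the Mackey filtration of $\Res_{\alpha,\beta}^{\alpha+\beta} \circ \Ind_{\gamma,\delta}^{\gamma+\delta}$ coming from a minimal double-coset decomposition of $S_d$ relative to the appropriate Young subgroups (cf. \cite[Proposition 2.18]{KL1}); in the Grothendieck group this filtration degenerates to precisely the twisted coproduct $r$, with the $q$-powers arising from the grading shifts of the filtration subquotients. For (4), I would check on generators that $P(i^{(n)})^\# \cong P(i^{(n)})$, with the grading shift in the definition of $P(i^{(n)})$ calibrated for exactly this purpose, and then use the compatibility of $\#$ with induction via $\tau$ to propagate $\gamma(\bar x) = \gamma(x)^\#$ to all of $\mathbf{f}_\Laurent$.

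Finally, to upgrade $\gamma$ to an isomorphism I would pass to the dual side. Define the character-like map $\gamma^* : [\Rep R] \to \mathbf{'f}^*_\Laurent$ by $[M] \mapsto \sum_{\bi} \qdim(e(\bi)M)\, \bi$. Lemma \ref{LQS}, combined with Lemma \ref{LThetaTheta*} and Lemma \ref{LProdCoprod}, implies that $\gamma^*$ takes values in $\mathbf{f}^*_\Laurent$ and is adjoint to $\gamma$ with respect to the Khovanov-Lauda pairing and the evaluation pairing between $\mathbf{f}_\Laurent$ and $\mathbf{f}^*_\Laurent$. Since distinct irreducible graded $R_\alpha$-modules have distinct formal characters, $\gamma^*$ is injective. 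Comparing ranks block by block, using Theorem \ref{TBasis} and the self-dual basis of Lemma \ref{LSelfD} on the representation side, then forces both $\gamma$ and $\gamma^*$ to be isomorphisms of free $\Laurent$-modules.
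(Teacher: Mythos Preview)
The paper does not prove this theorem; it is stated as a citation of \cite[\S 3]{KL1} with no argument given. Your proposal correctly identifies this and sketches the Khovanov--Lauda argument, so there is no discrepancy of approach to discuss.

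That said, your final paragraph has a circularity hazard worth flagging. You invoke injectivity of the character map (``distinct irreducible graded $R_\alpha$-modules have distinct formal characters'') and a rank comparison to conclude that $\gamma$ is an isomorphism. In the present paper these facts appear as Corollary~\ref{CInjective} and Theorem~\ref{Dklthm}, both of which are \emph{deduced from} Theorem~\ref{klthm}; quoting them here would be circular. In \cite{KL1} the logic runs differently: one shows that the Khovanov--Lauda pairing on $[\Proj{R}]$ pulls back along $\tilde\gamma$ to Lusztig's bilinear form on $\mathbf{'f}$, whose radical is precisely the ideal generated by the quantum Serre relations \cite[1.2.3--1.2.5]{Lubook}. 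This simultaneously gives descent to $\mathbf{f}_\Laurent$ and injectivity of $\gamma$, without appealing to the character map. Surjectivity is immediate since the classes $[P(i^{(n)})]$ generate $[\Proj{R}]$ as an algebra. If you want your sketch to stand on its own, replace the last paragraph with this form-theoretic argument.
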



Let $M$ be a finite dimensional graded $R_\al$-module. Define the {\em $q$-character} of $M$ as follows: 
\begin{equation*}
\CH M:=\sum_{\bi\in \words^\al}(\qdim M_\bi) \bi\in \mathbf{'f}_\Laurent^*.
\end{equation*}
The $q$-character map $\CH: \Rep{R_\al}\to \mathbf{'f}_\Laurent^*$ factors through to give an $ \Laurent$-linear map from the Grothendieck group
\begin{equation}\label{EChMap}
\CH: [\Rep{R_\al}]\to  \mathbf{'f}_\Laurent^*.
\end{equation}

It easily follows from the definitions and \cite[Lemma 3.5]{KL1} that

\begin{Lemma}\label{LThEq}
The map $\CH: [\Rep{R_\al}]\to  \mathbf{'f}_\Laurent^*$ is $\theta_i^*$-equivariant for all $i\in I$. 
\end{Lemma}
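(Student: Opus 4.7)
The plan is to unwind the two definitions of $\theta_i^*$ and check the equality weight-space by weight-space. Unwinding first the functor side: for $M\in\Rep{R_{\al+i}}$, one has $\theta_i^*M=\HOM_{R_i'}(P(i^{(1)}),e_{\al,i}M)$, and $P(i^{(1)})=P(i)$ is the regular representation of $R_i=\FF[y_1]$ (with the grading from \S2.2 of~\cite{KL1}; there is no shift since $n(n-1)/4=0$ for $n=1$). Since $\HOM_{\FF[y_1]}(\FF[y_1],N)\cong N$ as graded $\FF$-vector spaces via $\phi\mapsto\phi(1)$, this identifies $\theta_i^*M$ with $e_{\al,i}M$ as a graded $R_\al$-module (where $R_\al$ acts through $R_\al\hookrightarrow R_{\al,i}$, $x\mapsto x\otimes 1$).

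Next I would compute weight spaces. For $\bj\in\words^\al$, the element $e(\bj)\in R_\al$ acts on $e_{\al,i}M$ as $e(\bj)\otimes 1\in R_{\al,i}$, which under the embedding $R_{\al,i}\hookrightarrow R_{\al+i}$ equals $\sum_{k\in I,\,k=i}e(\bj k)=e(\bj i)$ (the sum collapses because the second factor already has weight $i$). Consequently
\[
(\theta_i^*M)_{\bj}\ =\ e(\bj)\bigl(e_{\al,i}M\bigr)\ =\ e(\bj i)M\ =\ M_{\bj i}
\]
as graded vector spaces, and so $\qdim(\theta_i^*M)_{\bj}=\qdim M_{\bj i}$.

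Finally I would compare with the dual-side definition. On $\mathbf{'f}^*$ the operator $\theta_i^*$ is given by $\theta_i^*(\bi')=\bj$ if $\bi'=\bj i$ and $\theta_i^*(\bi')=0$ otherwise. Therefore
\[
\theta_i^*\CH(M)\ =\ \sum_{\bi'\in\words^{\al+i}}(\qdim M_{\bi'})\,\theta_i^*(\bi')\ =\ \sum_{\bj\in\words^\al}(\qdim M_{\bj i})\,\bj\ =\ \CH(\theta_i^*M),
\]
and the claimed equivariance follows on the Grothendieck group.

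The argument is essentially all bookkeeping: the only external input is the identification $P(i^{(1)})\cong R_i$ from~\cite[\S2.2]{KL1}, and the only step that needs care is matching the idempotent $e(\bj)\otimes 1\in R_{\al,i}$ with $e(\bj i)\in R_{\al+i}$ under the nonunital embedding; there is no genuine obstacle.
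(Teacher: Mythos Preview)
Your proof is correct and takes essentially the same approach as the paper, which simply records that the lemma ``easily follows from the definitions and \cite[Lemma 3.5]{KL1}''. Your argument is a clean explicit unwinding of those definitions; the cited lemma from \cite{KL1} is presumably playing the role of your identification $(\theta_i^*M)_{\bj}\cong M_{\bj i}$ via $e(\bj)\otimes 1\mapsto e(\bj i)$.
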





We now state a dual result to Theorem~\ref{klthm}. 

\begin{Theorem}\label{Dklthm} 
There is an $\Laurent$-linear isomorphism
$
\gamma^*: [\Rep{R}]\stackrel{\sim}{\rightarrow}\mathbf f_\Laurent^*
$
with the following properties:
\begin{itemize}
\item[(1)] $\ga^*([R_0])= 1$, where $R_0$ is the left regular representation of $R_0$; 
\item[(2)] $\gamma^*((\theta_i^*)^{(n)}(x)) = 
(\theta_i^*)^{(n)}(\gamma^*(x))$ for all $x \in [\Rep{R}],\ 
i \in I,\ n \geq 1$; 
\item[(3)] the following triangle is commutative:
$$
\begin{pb-diagram}
\node{}\node{\mathbf{'f}_\Laurent^*}
\node{} \\
\node{[\Rep{R}]} \arrow[2]{e,t}{\ga^*}
\arrow{ne,t}{\CH}
\node{}\node{\mathbf{f}_\Laurent^*}
\arrow{nw,t}{\iota}
\end{pb-diagram}
$$
\item[(4)] $\gamma^*([\Rep{R_\al}])=(\mathbf f^*_\Laurent)_\al$ for all $\ga\in Q_+$;
\item[(5)] under the isomorphism $\ga^*$, the 
multiplication $(\mathbf f^*_\Laurent)_\alpha \otimes (\mathbf f^*_\Laurent)_\beta
\rightarrow  (\mathbf f^*_\Laurent)_{\alpha+\beta}$  
corresponds to the product on $[\Rep{R}]$ induced by 
$\Ind_{\alpha,\beta}^{\alpha+\beta}$;
\item[(6)] under the isomorphism $\ga^*$, the comultiplication $\Delta_{\alpha,\beta}:(\mathbf f^*_\Laurent)_{\alpha+\beta} 
\rightarrow (\mathbf f^*_\Laurent)_\alpha \otimes (\mathbf f^*_\Laurent)_\beta$
corresponds to the coproduct on $[\Rep{R}]$ induced by 
$\Res_{\alpha,\beta}^{\alpha+\beta}$;
\item[(7)] under the isomorphism $\ga^*$, the bar-involution on $\mathbf f^*_{\Laurent}$ 
corresponds to the
anti-linear involution induced by the duality $\circledast$.
\end{itemize}
\end{Theorem}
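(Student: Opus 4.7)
The plan is to define $\ga^*$ as the adjoint of $\ga$ under the Khovanov-Lauda pairing and then to deduce all seven properties from Theorem~\ref{klthm} together with Lemmas~\ref{LFirst}, \ref{LProdCoprod}, \ref{LThetaTheta*}. Concretely, for $[M]\in[\Rep{R_\al}]$ and $x\in(\mathbf f_\Laurent)_\al$, set $\ga^*([M])(x):=(\ga(x),[M])$, and extend by zero on the remaining graded pieces so that $\ga^*([M])\in(\mathbf f_\Laurent^*)_\al$. The duality of bases recorded just after Lemma~\ref{LSelfD}---namely that the self-dual irreducibles $\{[L]\}$ and their projective covers $\{[P_L]\}$ are dual $\Laurent$-bases under $(\cdot,\cdot)$---together with the fact that $\ga$ is an isomorphism, forces $\ga^*$ to be an $\Laurent$-linear isomorphism sending $\{[L]\}$ to the basis dual to $\{\ga^{-1}([P_L])\}$. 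Property~(4) is built into the construction, and (1) is the one-line verification that $([R_0],[R_0])=\qdim(R_0\otimes_{R_0}R_0)=1$.

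Properties (2), (5), (6), and (7) are then formal consequences of the corresponding properties of $\ga$. For (2), combining Lemma~\ref{LThetaTheta*} with the intertwining $\ga(x\theta_i^{(n)})=\theta_i^{(n)}\ga(x)$ of Theorem~\ref{klthm} gives that $\ga^*$ intertwines $(\theta_i^*)^{(n)}$ on $[\Rep R]$ with $(\theta_i^*)^{(n)}$ on $\mathbf f_\Laurent^*$, the latter being by definition dual to right multiplication by $\theta_i^{(n)}$. For (5) and (6), Lemma~\ref{LProdCoprod} asserts that $(\cdot,\cdot)$ intertwines the products and coproducts on $[\Proj R]$ and $[\Rep R]$; transported through $\ga$ and $\ga^*$, this identifies the quantum shuffle product and coproduct $\De$ on $\mathbf f_\Laurent^*$ with induction and restriction on $[\Rep R]$. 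For (7), Lemma~\ref{LFirst} combined with Theorem~\ref{klthm}(4) gives $\overline{(\ga(x),[M^\circledast])}=(\ga(x)^\#,[M])=(\ga(\bar x),[M])$, which translates directly to $\ga^*([M^\circledast])=\overline{\ga^*([M])}$.

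The main obstacle is property~(3), which requires reconciling the abstract definition of $\ga^*$ with the concrete $q$-character map. The plan is to verify $\iota\circ\ga^*=\CH$ by evaluating both sides on the $\Laurent$-basis $\{\theta'_\bi\mid \bi\in\words\}$ of $\mathbf{'f}_\Laurent$. Writing $\bi=(j_1)^{r_1}\cdots(j_m)^{r_m}$, the pairing between words and monomials yields $\CH([M])(\theta'_\bi)=\qdim(M_\bi)/[\bi]!$. On the other side, $\iota(\ga^*([M]))(\theta'_\bi)=\ga^*([M])(\theta_\bi)=(\ga(\theta_\bi),[M])$, and Theorem~\ref{klthm} identifies $\ga(\theta_\bi)$ with the class of $P(j_1^{(r_1)})\circ\cdots\circ P(j_m^{(r_m)})$. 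The key technical input is the isomorphism $R_\al e(\bi)\cong [\bi]!\cdot(P(j_1^{(r_1)})\circ\cdots\circ P(j_m^{(r_m)}))$, obtained by inducing the decomposition $R_{n\al_j}\cong[n]_j^!\,P(j^{(n)})$ factor by factor; combined with $(R_\al e(\bi))^\tau\otimes_{R_\al}M\cong M_\bi$, it yields $(\ga(\theta_\bi),[M])=\qdim(M_\bi)/[\bi]!$, matching $\CH$ and completing~(3).
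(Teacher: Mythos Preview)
Your proof is correct and follows the same overall strategy as the paper: define $\ga^*$ as the dual of $\ga$ with respect to the Khovanov--Lauda pairing and read off properties (1), (2), (4)--(7) from the corresponding properties of $\ga$ via Lemmas~\ref{LFirst}, \ref{LProdCoprod}, \ref{LThetaTheta*}. For (7) the paper quotes Lemma~\ref{LThird} rather than Lemma~\ref{LFirst}, but since the former is an immediate corollary of the latter this is not a real difference.

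The one genuine divergence is in the proof of (3). The paper argues by induction on $\height(\al)$: both $\iota\circ\ga^*$ and $\CH$ are $\theta_i^*$-equivariant (the latter by Lemma~\ref{LThEq}), so the difference $\iota\circ\ga^*([M])-\CH([M])$ is annihilated by every $\theta_i^*$, hence vanishes by the inductive hypothesis. Your approach is instead a direct computation: identify $\ga(\theta_\bi)$ with $[P(j_1^{(r_1)})\circ\cdots\circ P(j_m^{(r_m)})]$, use the decomposition $R_\al e(\bi)\cong[\bi]!\cdot\bigl(P(j_1^{(r_1)})\circ\cdots\circ P(j_m^{(r_m)})\bigr)$ together with $(R_\al e(\bi))^\tau\otimes_{R_\al}M\cong M_\bi$, and match both sides as $\qdim(M_\bi)/[\bi]!$. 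This is perfectly valid; it trades the appeal to Lemma~\ref{LThEq} for a concrete identification of the relevant projective and a direct Khovanov--Lauda pairing computation. The paper's inductive argument is slightly slicker in that it never needs to name the projective $\ga(\theta_\bi)$ explicitly or worry about the factor $[\bi]!$, while your argument is more self-contained and makes the compatibility with $\CH$ visible at the level of individual weight spaces.
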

\begin{proof}
As explained in section~\ref{SSPD}, there is a non-trivial bilinear pairing $(\cdot,\cdot):[\Proj{R}]\times[\Rep{R}]\to\Laurent$ defined by $([P],[M])=\qdim(P^\tau\otimes_{R} M)$. Using this pairing we identify $[\Rep{R}]$ with $[\Proj{R}]^*$ and define $\ga^*$ to be the dual map to the map $\ga$ from Theorem~\ref{klthm}. Then (1) and (4) are clear from Theorem~\ref{klthm}. Part (2) follows from Theorem~\ref{klthm} and Lemma~\ref{LThetaTheta*}. Part (7) follows from Theorem~\ref{klthm}(4) and Lemma~\ref{LThird}. 
Parts (5),(6) come from Theorem~\ref{klthm}(2),(3) and Lemma~\ref{LProdCoprod}. 

Finally for (3), we apply induction on the height of $\al$ to prove that $\iota\circ\ga^*(x)=\CH(x)$ for any $x\in [\Rep{R_\al}]$. The base of induction is clear. If $\height(\al)>0$, then it suffices to prove that $\theta_i^*(\iota\circ\ga^*(x)-\CH(x))=0$ for any $i\in I$. But the maps $\iota$, $\ga^*$ and $\ch_q$ are all $\theta_i^*$-equivariant, see Lemma~\ref{LThEq}. So we are reduced to checking that $\iota\circ\ga^*(\theta_i^*(x))-\CH(\theta_i^*(x))=0$, which is true by induction. 
\end{proof}

We point out some consequences, all of which have been noted  in \cite{KL1}. 

\begin{Corollary}\label{CInjective}%
{\rm \cite[Theorem 3.17]{KL1}} 
The $q$-character map (\ref{EChMap}) is injective.
\end{Corollary}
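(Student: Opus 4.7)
The proof is essentially immediate from Theorem~\ref{Dklthm}(3). The plan is to observe that the commutative triangle
\[
\CH = \iota \circ \gamma^*
\]
expresses the $q$-character map as a composition of two maps whose injectivity we can verify separately.

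First, I would note that $\gamma^*:[\Rep{R}] \to \mathbf{f}^*_\Laurent$ is an $\Laurent$-linear \emph{isomorphism} by Theorem~\ref{Dklthm}, and in particular its restriction $\gamma^*:[\Rep{R_\al}] \to (\mathbf{f}^*_\Laurent)_\al$ is an isomorphism by part (4) of that theorem. Second, I would verify that the map $\iota:\mathbf{f}^*_\Laurent \hookrightarrow \mathbf{'f}^*_\Laurent$ is injective: this is built into its very construction as the $\Laurent$-form of the inclusion $\mathbf{f}^* \hookrightarrow \mathbf{'f}^*$ that is dual to the quotient map $\mathbf{'f} \onto \mathbf{f}$, and it was already used tacitly to identify $\mathbf{f}^*_\Laurent$ with an $\Laurent$-submodule of $\mathbf{'f}^*_\Laurent$.

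Since the composition of an isomorphism and an injection is injective, $\CH = \iota \circ \gamma^*$ is injective on each $[\Rep{R_\al}]$, and hence on $[\Rep{R}] = \bigoplus_\al [\Rep{R_\al}]$ (noting that $\CH$ respects the $Q_+$-grading, so there is no mixing between different $\al$'s). There is no real obstacle here; the entire content was packaged into Theorem~\ref{Dklthm}(3), whose inductive proof already required showing that $\iota\circ \gamma^*$ agrees with $\CH$ via $\theta_i^*$-equivariance. So the corollary is little more than a restatement of that commutative triangle.
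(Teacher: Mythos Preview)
Your proof is correct and follows exactly the same approach as the paper, which simply says ``Follows from Theorem~\ref{Dklthm}(3).'' You have merely spelled out what this means: the commutative triangle $\CH = \iota \circ \gamma^*$ exhibits $\CH$ as a composition of an isomorphism with an injection.
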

\begin{proof}
Follows from Theorem~\ref{Dklthm}(3). 
\end{proof}

\begin{Corollary}\label{CChSh}
\cite[Lemma 2.20]{KL1}
Let $\al,\be\in Q_+$, $M\in\Rep{R_\al}$, $N\in \Rep{R_\beta}$. Then
$
\CH(M\circ N)=(\CH M)\circ (\CH N).
$
\end{Corollary}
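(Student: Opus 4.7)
The plan is to deduce this corollary directly from Theorem~\ref{Dklthm}, avoiding any explicit computation with the basis of Theorem~\ref{TBasis}. By part (3) of that theorem, the $q$-character map factors as $\CH = \iota\circ\ga^*$, where $\iota:\mathbf{f}^*_\Laurent\hookrightarrow\mathbf{'f}^*_\Laurent$ is the inclusion and $\ga^*:[\Rep{R}]\iso\mathbf{f}^*_\Laurent$ is the isomorphism produced in Theorem~\ref{Dklthm}.

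By part (5), $\ga^*$ intertwines the product $\circ$ on $[\Rep{R}]$ (induced by the induction functors $\Ind_{\al,\be}^{\al+\be}$) with the multiplication on $\mathbf{f}^*_\Laurent$. But the multiplication on $\mathbf{f}^*_\Laurent$ is, by the very definition of the $\Laurent$-form, simply the restriction of the quantum shuffle product on $\mathbf{'f}^*_\Laurent$: recall from the subsection on $\mathbf{f}^*$ that $\mathbf{f}^*$ is a subalgebra of $\mathbf{'f}^*$ under the $\circ$-product, and that $\mathbf{f}^*_\Laurent$ is stable under this operation. Thus $\iota$ is itself an algebra homomorphism with respect to $\circ$.

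Putting these two observations together gives
\begin{align*}
\CH(M\circ N) &= \iota\bigl(\ga^*([M\circ N])\bigr) = \iota\bigl(\ga^*([M])\circ\ga^*([N])\bigr)\\
&= \iota\bigl(\ga^*([M])\bigr)\circ\iota\bigl(\ga^*([N])\bigr) = \CH(M)\circ\CH(N),
\end{align*}
which is the desired equality.

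There is no real obstacle, since all the work has been done in setting up Theorem~\ref{Dklthm}; the only point that deserves a second glance is the compatibility of $\iota$ with the shuffle product, which is immediate from the construction of $\mathbf{f}^*_\Laurent$ as a shuffle subalgebra of $\mathbf{'f}^*_\Laurent$. (Alternatively, one could prove the corollary from scratch by expanding $R_{\al+\be}e_{\al,\be}\otimes_{R_{\al,\be}}(M\boxtimes N)$ using the basis of Theorem~\ref{TBasis} indexed by minimal-length coset representatives of $S_d\times S_f$ in $S_{d+f}$, which reproduces formula~(\ref{EQSh}) directly; but the categorical route above is much cleaner.)
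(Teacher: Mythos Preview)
Your argument is correct and is precisely the paper's own proof: the paper simply states that the result follows from Theorem~\ref{Dklthm}(3),(5), and you have spelled out in detail how those two parts combine (via the fact that $\iota$ respects the shuffle product) to give the claim.
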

\begin{proof}
Follows from Theorem~\ref{Dklthm}(3),(5). 
\end{proof}

\section{Combinatorics of words and dual-canonical basis}
It was first noticed in \cite{LR} and further developed in \cite{Ro} and \cite{Lec} that the so-called good Lyndon words play an important role in describing various bases of $\mathbf f$. Since $R_\al$ is closely related to  $\mathbf f$, it is natural to expect that good Lyndon words should also be important for representation theory of $R_\al$. 

\subsection{Lexicographic order and Lyndon words}
Let us fix a total order `$<$' on the set $I$, and denote by the same symbol `$<$' the corresponding lexicographical order on the set of words $\words$. 
If $x\in\mathbf{'f}^*$ we denote by $\max(x)$ the largest word appearing in $x$. 

\begin{Lemma}\label{LLex}
Let $m\in\Z_{>0}$, and $\bi^{(r)},\bj^{(r)}\in I^{d_r}$ with $\bi^{(r)}\leq\bj^{(r)}$ for $r=1,2,\dots,m$. Then $\max(\bi^{(1)}\circ \bi^{(2)}\circ \dots\circ \bi^{(r)})\leq\max(\bj^{(1)}\circ \bj^{(2)}\circ \dots\circ \bj^{(r)})$. If in addition the strict inequality $\bi^{(r)}<\bj^{(r)}$ holds for some $1\leq r\leq m$, then $\max(\bi^{(1)}\circ \bi^{(2)}\circ \dots\circ \bi^{(r)})<\max(\bj^{(1)}\circ \bj^{(2)}\circ \dots\circ \bj^{(r)})$. 
\end{Lemma}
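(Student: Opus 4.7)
The plan is to prove the inequality shuffle-pattern by shuffle-pattern after unfolding the $m$-fold shuffle. By associativity of $\circ$ (dual to the coassociativity of $r$ on $\mathbf{'f}$, cf.\ \cite[1.2.2]{Lubook}) and iteration of \eqref{EQSh}, the $m$-fold shuffle expands as
\[
\bi^{(1)}\circ \bi^{(2)}\circ \cdots \circ \bi^{(m)} \;=\; \sum_{\pi} c_\pi\, W_\pi(\bi),
\]
where $\pi$ runs over the ordered partitions $(S_1,\ldots,S_m)$ of $\{1,\ldots,D\}$ (with $D=d_1+\cdots+d_m$ and $|S_r|=d_r$), $W_\pi(\bi)$ is the word of length $D$ obtained by placing the letters of each $\bi^{(r)}$ in their natural order into the positions of $S_r$, and each $c_\pi$ is a nonzero power of $q$. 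In particular, no cancellation occurs, so $\max(\bi^{(1)}\circ\cdots\circ\bi^{(m)}) = \max_\pi W_\pi(\bi)$, and similarly with $\bj$ in place of $\bi$.

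The heart of the argument --- and the only nontrivial step --- is to show that $W_\pi(\bi) \leq W_\pi(\bj)$ for every fixed $\pi$. I would let $p^*$ be the first position at which $W_\pi(\bi)$ and $W_\pi(\bj)$ disagree (if no such $p^*$ exists, the two words coincide and there is nothing to prove). Write $p^*$ as the $q$-th smallest element of the block $S_r$; then $W_\pi(\bi)_{p^*}$ is the $q$-th letter of $\bi^{(r)}$ and $W_\pi(\bj)_{p^*}$ is the $q$-th letter of $\bj^{(r)}$. Every earlier element $p<p^*$ of $S_r$ is the $q'$-th smallest element of $S_r$ for some $q'<q$, and the minimality of $p^*$ forces the corresponding letters of $\bi^{(r)}$ and $\bj^{(r)}$ to agree. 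Thus $\bi^{(r)}$ and $\bj^{(r)}$ agree on positions $1,\ldots,q-1$ but differ at position $q$; since they share the common length $d_r$ and $\bi^{(r)}\leq \bj^{(r)}$, this forces $\bi^{(r)}_q < \bj^{(r)}_q$, and hence $W_\pi(\bi) < W_\pi(\bj)$.

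Taking the maximum over $\pi$ on both sides then gives the weak inequality stated in the lemma. For the strict version, suppose $\bi^{(r_0)} < \bj^{(r_0)}$ for some $r_0$, and pick $\pi^*$ realizing $\max_\pi W_\pi(\bi)$. If $W_{\pi^*}(\bi) < W_{\pi^*}(\bj)$, the conclusion is immediate. Otherwise $W_{\pi^*}(\bi) = W_{\pi^*}(\bj)$; but every letter of every $\bi^{(r)}$ (respectively every $\bj^{(r)}$) occupies a unique position in $W_{\pi^*}$, so this equality forces $\bi^{(r)} = \bj^{(r)}$ for all $r$, contradicting $\bi^{(r_0)} < \bj^{(r_0)}$. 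The only real obstacle is careful bookkeeping with iterated interleaving patterns; once the patterns are indexed cleanly, the lexicographic comparison at the first point of divergence is routine.
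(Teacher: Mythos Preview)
Your argument is correct and is essentially the same as the paper's proof: both compare the shuffle products pattern-by-pattern, observing that for each fixed shuffle pattern $\pi$ the resulting word $W_\pi(\bi)$ is $\leq W_\pi(\bj)$ (with strict inequality when some $\bi^{(r_0)}<\bj^{(r_0)}$, since then $W_\pi(\bi)\neq W_\pi(\bj)$), and then take the maximum over all patterns. The paper states this in one sentence without the bookkeeping; your version supplies the details (including the no-cancellation remark) but adds no new idea. One cosmetic point: avoid using $q$ as a position index, since $q$ already denotes the quantum parameter throughout the paper.
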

\begin{proof}
Any shuffle of the words $\bi^{(1)},\bi^{(2)},\dots,\bi^{(r)}$ is less than or equal to the same shuffle applied to the words $\bj^{(1)},\bj^{(2)},\dots,\bj^{(r)}$, and the strict inequality holds if the second stronger assumption of the lemma holds. 
\end{proof}

A word $\bi\neq\emptyset$ is called {\em Lyndon} if it is lexicographically smaller than all its proper right factors. It is well known \cite{Lo,Reu} that every word $\bi$ has a unique factorization  $\bi=\bi^{(1)}\bi^{(2)}\cdots\bi^{(k)}$ such that $\bi^{(1)}\geq \bi^{(2)}\geq \dots\geq \bi^{(k)}$ are Lyndon words. We refer to this as the {\em canonical factorization} of $\bi$. 


\begin{Lemma}\label{LLL}
Let $\bi,\bj,\bk,\bl\in\words$, $\bl\geq \bi$, $\bl=\bj\bk$, $\bi,\bk\neq\emptyset$, and assume that $\bl$ is Lyndon. Then $\bj\bi\bk<\bl\bi$ unless $\bl=\bk=\bi$. 
\end{Lemma}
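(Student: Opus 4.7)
\emph{Approach.} Since $\bj\bi\bk$ and $\bl\bi=\bj\bk\bi$ share the common prefix $\bj$, the target inequality $\bj\bi\bk<\bl\bi$ is equivalent to $\bi\bk<\bk\bi$. The first step is to note that in every non-exceptional situation one has the strict inequality $\bi<\bk$: when $\bj=\emptyset$ we have $\bk=\bl\geq\bi$, and excluding $\bl=\bk=\bi$ forces $\bi<\bk$; when $\bj\neq\emptyset$, $\bk$ is a proper nonempty right factor of the Lyndon word $\bl$, so $\bl<\bk$, and together with $\bi\leq\bl$ this gives $\bi<\bk$.

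\emph{Induction.} The core of the argument is to prove the following auxiliary claim by induction on $|\bk|$: whenever $\bk$ is a nonempty right factor of a Lyndon word $\bl$ and $\bi$ is a nonempty word with $\bi\leq\bl$ and $\bi<\bk$, one has $\bi\bk<\bk\bi$. For the base case $|\bk|=1$, the hypothesis $\bi<\bk$ forces $\bi_1<\bk_1$ (a proper prefix of a single letter must be empty, and $\bk$ cannot be a proper prefix of $\bi$ without contradicting $\bi<\bk$), so $\bi\bk$ and $\bk\bi$ already disagree at position~$1$ in favour of $\bi\bk$.

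\emph{Inductive step.} For the step I would compare $\bi$ and $\bk$ position by position. If they first disagree at some position $p$ with $p\leq|\bi|$ and $p\leq|\bk|$, then $\bi_p<\bk_p$ because $\bi<\bk$, and the same comparison at position~$p$ shows $\bi\bk<\bk\bi$. The case that $\bk$ is a proper prefix of $\bi$ is impossible (it would give $\bk<\bi$), and $\bi=\bk$ is ruled out. The only remaining, genuinely interesting case is that $\bi$ is a proper prefix of $\bk$; write $\bk=\bi\bm$ with $\bm$ nonempty, and cancel the leading $\bi$ in $\bi\bk=\bi\bi\bm$ and $\bk\bi=\bi\bm\bi$ to reduce the desired inequality to $\bi\bm<\bm\bi$. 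The key observation is that $\bm$ is itself a proper nonempty right factor of $\bl$: it is a suffix of $\bk$, hence of $\bl$, and $|\bm|<|\bl|$ because $\bi\neq\emptyset$. The Lyndon property therefore gives $\bl<\bm$, so $\bi\leq\bl<\bm$ yields $\bi<\bm$, and since $|\bm|<|\bk|$ the induction hypothesis applied to $\bm$ in place of $\bk$ (with the same $\bl$) produces $\bi\bm<\bm\bi$.

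\emph{Main obstacle.} The delicate point is precisely this last step. After the reduction $\bk\mapsto\bm$, one must invoke the induction hypothesis relative to the \emph{original} Lyndon word $\bl$ rather than relative to $\bk$, which in general need not be Lyndon itself. What makes this legitimate is the transitivity of the right-factor relation (so $\bm$ remains a right factor of $\bl$) together with the persistence of the inequality chain $\bi\leq\bl<\bm$ under the reduction; these two facts together ensure that the hypotheses of the auxiliary claim continue to hold at each level of the recursion, while $|\bk|$ strictly decreases and forces termination.
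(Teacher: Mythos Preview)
Your proof is correct and takes a genuinely different route from the paper's. The paper observes that $\bj\bi\bk$ is a particular shuffle of $\bl$ and $\bi$, invokes \cite[Lemma~15]{Lec} to conclude $\max(\bl\circ\bi)=\bl\bi$, and then argues case by case to exclude equality $\bj\bi\bk=\bl\bi$. Your argument instead reduces to the equivalent inequality $\bi\bk<\bk\bi$ and proves it by an elementary induction on $|\bk|$, exploiting only the defining property of Lyndon words (that every proper nonempty suffix of $\bl$ is $>\bl$) together with the observation that in the prefix case $\bk=\bi\bm$ the tail $\bm$ is again a proper suffix of $\bl$. Your approach is fully self-contained and avoids any appeal to shuffle products or to the external reference; the paper's approach is shorter once one grants \cite[Lemma~15]{Lec}, and fits more naturally into the surrounding discussion where shuffle products are already in play.
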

\begin{proof}
Let $\bi=(i_1,\dots,i_b)$, $\bl=(l_1,\dots,l_a)$. 
By \cite[Lemma 15]{Lec}, we have $\max(\bl\circ \bi)=\bl\bi$. As $\bj\bi\bk$ is a shuffle of $\bl$ and $\bi$, we may assume that $\bj\bi\bk=\bl\bi$. It follows that $(i_1,\dots,i_c)=(l_d,\dots,l_b)$ for some $1\leq c\leq b$ and $1\leq d\leq b$. Moreover, assume that the condition $\bl=\bk=\bi$ does not hold. Then $c<b$ or $d>1$. Now, as $\bl$ is Lyndon, we have $(l_d,\dots,l_b)\geq \bl\geq\bi\geq (i_1,\dots,i_c)$. So $(i_1,\dots,i_c)=(l_d,\dots,l_b)$ implies $(l_d,\dots,l_b)= \bl=\bi= (i_1,\dots,i_c)$. But $c<b$ implies $(i_1,\dots,i_c)<\bi$, and  $d>1$ implies $(l_d,\dots,l_b)> \bl$, since $\bl$ is Lyndon. In either case we get a contradiction.  
\end{proof}

\subsection{Maximal elements in shuffle products} 
We need to emphasize a terminological issue: when we speak of a shuffle of  several  words, we mean the corresponding shuffle {\em permutation} (i.e. an individual term  appearing in the shuffle product (\ref{EQSh})) as opposed to the {\em word}  obtained as a result of the shuffle. For example, there are {\em two}\, shuffles of $(i)$ and $(i)$, but they produce {\em one}\, and the same word $(i,i)$.

\begin{Lemma}\label{LTechnical} 
Let $\bi\in\words$, and write the canonical factorization of $\bi$ in two ways: 
$\bi=\bi^{(1)}\dots\bi^{(k)}=(\bj^{(1)})^{n_1}\dots(\bj^{(m)})^{n_m}$, where $\bi^{(1)}\geq \dots\geq \bi^{(k)}$ and $\bj^{(1)}>\dots>\bj^{(m)}$ are Lyndon words. Then
\begin{enumerate}
\item[{\rm (i)}] $\max(\bi^{(1)}\circ \dots\circ\bi^{(k)})=\bi$.
\item[{\rm (ii)}] There are exactly $n_1!\cdots n_m!$ shuffles of $\bi^{(1)},\dots,\bi^{(k)}$ equal to $\bi$, namely the permutations of the $n_l$ words $\bj^{(l)}$ for all $l=1,\dots,m$. 
\end{enumerate}
\end{Lemma}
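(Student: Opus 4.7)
My plan is to prove (i) and (ii) simultaneously by induction on $k$. The base case $k=1$ is immediate, since the only shuffle gives $\bi=\bi^{(1)}$ with $n_1!=1$.

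For the inductive step, given a shuffle $\sigma$ of $\bi^{(1)},\ldots,\bi^{(k)}$ producing $w_\sigma\in\words$, I split into cases according to whether positions $1,\ldots,|\bj^{(1)}|$ of $w_\sigma$ are filled by the letters of a single $\bi^{(r)}$ (Case A) or by letters from at least two distinct $\bi^{(r)}$'s (Case B).

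In Case A, if additionally $w_\sigma=\bi$, then the relevant $\bi^{(r)}$ must equal $\bj^{(1)}$: the hypothesis $|\bi^{(r)}|<|\bj^{(1)}|$ would leave some initial position unfilled, while $|\bi^{(r)}|>|\bj^{(1)}|$ would force $\bi^{(r)}$ to start with $\bj^{(1)}$ (violating $\bi^{(r)}\leq\bj^{(1)}$ since then $\bi^{(r)}>\bj^{(1)}$), so $|\bi^{(r)}|=|\bj^{(1)}|$ and $\bi^{(r)}=\bj^{(1)}$; hence $r\in\{1,\ldots,n_1\}$. This gives $w_\sigma=\bj^{(1)}w'$ with $w'$ a shuffle of the remaining $k-1$ Lyndon words, whose concatenation is $\bi^{(2)}\cdots\bi^{(k)}$ with grouped form $(\bj^{(1)})^{n_1-1}(\bj^{(2)})^{n_2}\cdots(\bj^{(m)})^{n_m}$ if $n_1\geq 2$, or $(\bj^{(2)})^{n_2}\cdots(\bj^{(m)})^{n_m}$ if $n_1=1$. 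The inductive hypothesis gives $w'\leq\bi^{(2)}\cdots\bi^{(k)}$ (whence $w_\sigma\leq\bi$) and supplies $(n_1-1)!\,n_2!\cdots n_m!$ tail shuffles achieving equality. Summing over the $n_1$ choices of $r$ yields $n_1\cdot(n_1-1)!\,n_2!\cdots n_m!=n_1!\,n_2!\cdots n_m!$ shuffles in Case A with $w_\sigma=\bi$, matching (ii).

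In Case B, I must show $w_\sigma<\bi$ strictly; equivalently, the word $\bu$ occupying positions $1,\ldots,|\bj^{(1)}|$ of $w_\sigma$ satisfies $\bu<\bj^{(1)}$. Since each $\bi^{(r)}\leq\bj^{(1)}$ forces $\bi^{(r)}_1\leq\bj^{(1)}_1$, and since $\bj^{(1)}$ Lyndon implies $\bj^{(1)}_1\leq\bj^{(1)}_p$ for every $p$, a letter-by-letter comparison produces the weak inequality $\bu\leq\bj^{(1)}$. For the strict inequality when the interleaving is genuinely proper, I would invoke Lemma~\ref{LLL}, applied at the first position where $\bu$ transitions between letters of two distinct contributing $\bi^{(r)}$'s: the Lyndon property of $\bj^{(1)}$ should forbid $\bj^{(1)}$ from being realised as a proper interleaving of prefixes of Lyndon words each $\leq\bj^{(1)}$.

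The principal obstacle is precisely this strict inequality in Case B. The weak inequality is a routine comparison, but excluding coincidental equalities $\bu=\bj^{(1)}$ under proper interleaving requires delicate use of the Lyndon property of $\bj^{(1)}$ together with Lemma~\ref{LLL}. I expect the argument to proceed by locating the first ``switch'' between contributing $\bi^{(r)}$'s in $\bu$ and reducing the desired contradiction to an instance of Lemma~\ref{LLL} with $\bl=\bj^{(1)}$ (or a suitable right factor), taking care that the exceptional case $\bl=\bk=\bi$ of that lemma does not arise.
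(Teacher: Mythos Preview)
Your proposal has a genuine gap, and you identify it yourself: the strict inequality in Case~B is not established. Your letter-by-letter argument for the weak inequality $\bu\leq\bj^{(1)}$ is also flawed: it only bounds the \emph{first} letters of the contributing $\bi^{(r)}$'s via $\bi^{(r)}_1\leq\bj^{(1)}_1\leq\bj^{(1)}_p$, but positions $2,\ldots,|\bj^{(1)}|$ of $\bu$ may contain non-first letters of some $\bi^{(r)}$, about which this says nothing. More seriously, your plan for the strict inequality---locating the first switch and applying Lemma~\ref{LLL}---does not visibly reduce to an instance of that lemma: Lemma~\ref{LLL} concerns inserting one word $\bi$ intact into a Lyndon word $\bl$, whereas your Case~B involves an arbitrary interleaving of prefixes of several Lyndon words, and you give no mechanism for collapsing this to a single insertion.

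The paper's proof takes a different route that sidesteps this difficulty. It inducts on the \emph{length} of $\bi$ rather than on $k$, and peels off the \emph{smallest} factor $\bi^{(k)}$ from the back rather than $\bj^{(1)}$ from the front. Using Lemma~\ref{LLex} it suffices to analyse $\bi'\circ\bi^{(k)}$ with $\bi'=\bi^{(1)}\cdots\bi^{(k-1)}$. One tracks into which block $\bi^{(a)}$ the letters of $\bi^{(k)}$ are shuffled; if only a proper prefix $(i_1,\ldots,i_u)$ of $\bi^{(k)}$ lands inside $\bi^{(a)}$, that prefix is re-factored canonically as $\bl^{(1)}\cdots\bl^{(q)}$ and the inductive hypothesis is applied to the strictly shorter product $\bi^{(a)}\circ\bl^{(1)}\circ\cdots\circ\bl^{(q)}$. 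This is precisely where induction on length (not on $k$) is needed. Lemma~\ref{LLL} is invoked only in the residual case $k=2$ with all of $\bi^{(2)}$ inserted inside $\bi^{(1)}$, which matches its hypotheses exactly. If you want to salvage your front-peeling approach, you would need an analogue of this re-factor-and-recurse step, and induction on $k$ alone will not support it.
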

\begin{proof}
We apply induction on the length of the word $\bi$, the induction base being clear. Let $\bi'=\bi^{(1)}\dots\bi^{(k-1)}$. By Lemma~\ref{LLex}, to make the inductive step, it suffices to prove that $\max(\bi'\circ\bi^{(k)})=\bi$ and that the only shuffles of $\bi'$ and $\bi^{(k)}$ equal to $\bi$ are the obvious $n_m$ {\em standard insertions}\, of the word $\bi^{(k)}=\bj^{(m)}$ between the words $\bj^{(m)}$ appearing $n_{m}-1$ times in the tail of $\bi'$. 

Let $\bi^{(k)}=(i_1,\dots,i_r)$, and $\si$ be a shuffle of $\bi'$ and $\bi^{(k)}$ producing the maximal possible word $\max(\bi'\circ \bi^{(k)})$. We may assume that $\si$ shuffles the letter $i_1$ into some word $\bi^{(a)}$ for $a<k$, as otherwise $\si$ is the concatenation $\bi'\bi^{(k)}$, which is one of the standard insertions. Let $\bi^{(a)}=(j_1,\dots,j_s)$, and $u$ be the maximal index such that $i_1,\dots,i_u$ are shuffled into $\bi^{(a)}$ by $\si$, i.e. $i_1,\dots,i_u$ are shuffled to the left of $j_s$, and $i_{u+1},\dots,i_r$ are shuffled to the right of $j_s$ by $\si$. 
Consider the canonical factorization  $(i_1,\dots,i_u)=\bl^{(1)}\dots\bl^{(q)}$  for Lyndon words $\bl^{(1)},\dots,\bl^{(q)}$ with $(i_1,\dots,i_u)\geq \bl^{(1)}\geq \dots\geq \bl^{(q)}$. 

Assume that $k>2$ or $u<r$. 
Then the inductive assumption can be applied to the shuffle product $\bi^{(a)}\circ \bl^{(1)}\circ \dots\circ \bl^{(q)}$. If $u<r$, then $(i_1,\dots,i_u)<\bi^{(k)}$, in which case $\si$ does not produce $\max(\bi'\circ \bi^{(k)})$, since, by the inductive assumption, inserting $i_1,\dots,i_u$ immediately after the word $\bi^{(a)}$ gives to a larger word. On the other hand, if  $u=r$, 
the inductive assumption applied to $\bi^{(a)}\circ \bi^{(k)}$ implies that $\si$ can produce $\max(\bi'\circ \bi^{(k)})$ only if $\bi^{(k)}=\bi^{(a)}$ and $\bi^{(k)}$ is inserted immediately to the left of $\bi^{(a)}$, which is one of the standard insertions.

We are reduced to the case where  $k=2$ and $u=r$, so that we are shuffling $\bi^{(1)}=(j_1,\dots,j_s$ and $\bi^{(2)}=(i_1,\dots, i_r)$ and we are only considering the set of shuffles $X$ such that $i_r$ appears to the left of $j_s$. 
Consider the canonical factorization $(j_1,\dots,j_{s-1})=\bk^{(1)}\dots\bk^{(t)}$ for  Lyndon words $\bk^{(1)}\geq\dots\geq \bk^{(t)}$. Let $1\leq y\leq t$ be chosen so that $\bk^{(1)}\geq\dots\geq\bk^{(y-1)}\geq \bi^{(2)}\geq\bk^{(y)}\geq\dots \geq \bk^{(t)}$. By  Lemma~\ref{LLex} and the inductive assumption, we have 
\begin{align*}
\max((j_1,\dots,j_{s-1})\circ \bi^{(2)})& = \max(\bk^{(1)}\circ\dots\circ \bk^{(t)}\circ \bi^{(2)})
\\
&=\max(\bk^{(1)}\circ\dots\circ\bk^{(y-1)}\circ \bi^{(2)}\circ\bk^{(y)}\circ\dots \circ \bk^{(t)})
\\&=\bk^{(1)}\dots\bk^{(y-1)} \bi^{(2)}\bk^{(y)}\dots \bk^{(t)}.
\end{align*}
It follows that $\max(X)
= \bk^{(1)}\dots\bk^{(y-1)} \bi^{(2)}\bk^{(y)}\dots \bk^{(t)}(j_s)$. Now Lemma~\ref{LLL} implies that $\bi^{(1)}=\bi^{(2)}$ and $\si$ is the concatenation $\bi^{(2)}\bi^{(1)}$, which is one of the standard insertions. 
\end{proof}

Now it is easy to compute top coefficients in quantum shuffle products corresponding to canonical factorizations.

\begin{Corollary}\label{CTough}
Let 
$
\bi=(\bj^{(1)})^{n_1}\dots(\bj^{(m)})^{n_m} 
$ 
be a canonical factorization,  
with 
Lyndon words $\bj^{(1)}>\dots>\bj^{(m)}$. Set $\beta_k:=|\bj^{(k)}|$ for $k=1,\dots,m$. Then, for some coefficients $a_\bj\in\Laurent$, we have 
\begin{equation*}\label{EEE}
(\bj^{(1)})^{\circ n_1}\circ \dots\circ (\bj^{(m)})^{\circ n_m}=\big(\prod_{k=1}^m q_{\beta_k}^{-n_k(n_k-1)/2}[n_k]_{\beta_k}^!\big)\bi+\sum _{\bj<\bi}a_{\bj}\bj
\end{equation*}
\end{Corollary}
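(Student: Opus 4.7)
The plan is to combine Lemma~\ref{LTechnical} with a direct $q$-factorial computation for the shuffles that survive.

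First I would recast the iterated quantum shuffle as a sum over multi-shuffles. By associativity (which follows from coassociativity of $r$), the product $(\bj^{(1)})^{\circ n_1}\circ\cdots\circ (\bj^{(m)})^{\circ n_m}$ can be written as $\sum_\si q^{-e(\si)}\bi_\si$, where $\si$ runs over all permutations of the $N=\sum_k n_k|\bj^{(k)}|$ letters that preserve the internal order of each of the $n_1+\cdots+n_m$ Lyndon-word factors, and $e(\si)$ is the sum of $i_{\si(p)}\cdot i_{\si(q)}$ over all pairs of positions $p<q$ in different original factors with $\si^{-1}(p)>\si^{-1}(q)$. (This is just the iteration of (\ref{EQSh}).)

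By Lemma~\ref{LTechnical}(i) we have $\max\big((\bj^{(1)})^{\circ n_1}\circ\cdots\circ (\bj^{(m)})^{\circ n_m}\big)=\bi$, and by Lemma~\ref{LTechnical}(ii) the multi-shuffles $\si$ producing the word $\bi$ are exactly the $n_1!\cdots n_m!$ permutations that rearrange, for each $k$, the $n_k$ equal Lyndon factors $\bj^{(k)}$ among themselves (without mixing different Lyndon factors). Consequently $\si$ never creates inversions between positions coming from factors $\bj^{(k)}$ and $\bj^{(k')}$ with $k\neq k'$, and $e(\si)$ splits as a sum $\sum_{k=1}^m e_k(\pi_k)$, where $\pi_k\in S_{n_k}$ is the induced permutation of the $n_k$ copies of $\bj^{(k)}$. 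Swapping two entire adjacent copies of $\bj^{(k)}$ contributes $|\bj^{(k)}|\cdot|\bj^{(k)}|=\beta_k\cdot\beta_k$ to $e_k$, so $e_k(\pi_k)=(\beta_k\cdot\beta_k)\,\mathrm{inv}(\pi_k)$.

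Therefore the coefficient of $\bi$ equals
\[
\prod_{k=1}^m \sum_{\pi_k\in S_{n_k}} q^{-(\beta_k\cdot\beta_k)\,\mathrm{inv}(\pi_k)}=\prod_{k=1}^m \sum_{\pi_k\in S_{n_k}} q_{\beta_k}^{-2\,\mathrm{inv}(\pi_k)}.
\]
Now I invoke the standard identity $\sum_{\pi\in S_n} x^{\mathrm{inv}(\pi)}=\prod_{k=1}^n (1+x+\cdots+x^{k-1})$; with $x=q_{\beta_k}^{-2}$ this becomes $\prod_{k=1}^{n_k}(1+q_{\beta_k}^{-2}+\cdots+q_{\beta_k}^{-2(k-1)})=q_{\beta_k}^{-n_k(n_k-1)/2}[n_k]_{\beta_k}^!$, since $[k]_{\beta_k}=q_{\beta_k}^{k-1}(1+q_{\beta_k}^{-2}+\cdots+q_{\beta_k}^{-2(k-1)})$. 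Multiplying over $k$ yields the displayed coefficient, and Lemma~\ref{LTechnical}(i) guarantees that all other words occurring in the shuffle are $<\bi$. The main (very minor) obstacle is just the bookkeeping translating the iterated binary shuffle $e$-statistic into inversions of the block permutations $\pi_k$; once this is done the rest is the classical $q$-factorial identity.
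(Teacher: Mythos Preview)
Your proof is correct and follows essentially the same approach as the paper: both rely on Lemma~\ref{LTechnical} to identify the $n_1!\cdots n_m!$ shuffles producing $\bi$ and then compute the contribution of swapping equal Lyndon blocks. The paper's proof is a single sentence (observing that transposing two copies of $\bj^{(k)}$ contributes a factor $q_{\beta_k}^{-2}$), and you have simply spelled out the resulting $q$-factorial computation in detail.
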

\begin{proof}
One just needs to observe that the transposition of the two words equal to $\bj^{(k)}$ in the shuffle product appears  with the coefficient $q_{\beta_k}^{-2}$. 
\end{proof}

\subsection{Good words and highest weights} 
Recall that we have a natural embedding ${\mathbf f}^*\subset \mathbf{'f}^*$. 
A word $\bi\in\words$ is called {\em good} if there is 
$x\in {\mathbf f}^*$ such that $\bi=\max(x)$. 
Denote by $\words_+$ the set of all good words, and let 
$$\words_+^\al=\words_+\cap\words^\al.$$

Given a module $L\in\Rep{R_\al}$, we say that $\bi\in\words$ is the {\em highest weight} of $L$ if $\bi=\max(\CH L)$. Theorem~\ref{Dklthm}(3) immediately gives:

\begin{Lemma}\label{LHW}
The highest weight of any $L\in\Rep{R_\al}$ is a good word in $\words^\al_+$. 
\end{Lemma}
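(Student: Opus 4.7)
The plan is to deduce this immediately from the commutative triangle in Theorem~\ref{Dklthm}(3). Let $L\in\Rep{R_\al}$ be given, and set $x := \ga^*([L])$. By construction, $x$ lies in $\mathbf{f}_\Laurent^*\subset \mathbf{f}^*$, and under the embedding $\iota:\mathbf{f}^*\hookrightarrow\mathbf{'f}^*$ we have $\iota(x)=\CH L$ by Theorem~\ref{Dklthm}(3).

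Since we always identify $\mathbf{f}^*$ with its image under $\iota$ in $\mathbf{'f}^*$, the element $x\in\mathbf{f}^*$ satisfies $\max(x)=\max(\CH L)$, which is by definition the highest weight of $L$. Thus the highest weight of $L$ is of the form $\max(x)$ for $x\in\mathbf{f}^*$, hence is good by definition; and it lies in $\words^\al$ because $\CH L\in \mathbf{'f}^*_\Laurent$ is supported on $\words^\al$ (as $e(\bi)L=0$ for $\bi\notin\words^\al$). There is no real obstacle here; the only point to verify is that the identification of $\mathbf{f}^*$ as a subspace of $\mathbf{'f}^*$ via $\iota$ is compatible with the notion of ``max'', which is immediate since $\iota$ just records coordinates in the basis $\words$.
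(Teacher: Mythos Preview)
Your proof is correct and takes essentially the same approach as the paper: the paper simply says ``Theorem~\ref{Dklthm}(3) immediately gives'' before stating the lemma, and you have spelled out exactly the intended argument --- that $\CH L=\iota(\gamma^*([L]))$ lies in the image of $\mathbf{f}^*$ inside $\mathbf{'f}^*$, so its maximal word is good by definition.
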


Later on, under the assumption that the Cartan datum is of finite type, we will prove that for each $\bi\in\words_+^\al$ there exists a unique up to isomorphism {\em irreducible}\,  module $L(\bi)\in\Rep{R_\al}$ with highest weight $\bi$. 


The following result 
reduces classification of good words to that of good Lyndon words. 


\begin{Lemma}\label{LLR1} {\rm \cite{LR}, \cite[Proposition 17]{Lec}} 
Let $\bi$ be a word with canonical factorization $\bi=\bi^{(1)}\bi^{(2)} \dots\  \bi^{(k)}$. Then $\bi$  is good if and only if the Lyndon words $\bi^{(1)},\bi^{(2)}, \dots, \bi^{(k)}$ are good.
\end{Lemma}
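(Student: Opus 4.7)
I would prove the two implications separately, using that $\mathbf{f}^*$ is a subalgebra of $\mathbf{'f}^*$ under the quantum shuffle product $\circ$ and that $\De(\mathbf{f}^*)\subset\mathbf{f}^*\otimes\mathbf{f}^*$, together with the combinatorial input of Lemmas~\ref{LLex}--\ref{LTechnical} and Corollary~\ref{CTough}.

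For the \emph{if} direction, assume each Lyndon factor $\bi^{(r)}$ is good and choose $x^{(r)}\in\mathbf{f}^*$ with $\max(x^{(r)})=\bi^{(r)}$ appearing with nonzero coefficient $c_r$. Then $y:=x^{(1)}\circ\cdots\circ x^{(k)}$ lies in $\mathbf{f}^*$. Every summand $\bj^{(1)}\circ\cdots\circ\bj^{(k)}$ of its expansion satisfies $\bj^{(r)}\leq\bi^{(r)}$ for all $r$, so Lemma~\ref{LLex} yields $\max(\bj^{(1)}\circ\cdots\circ\bj^{(k)})\leq\bi$, with equality only when every $\bj^{(r)}=\bi^{(r)}$. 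Corollary~\ref{CTough} identifies the coefficient of $\bi$ in $\bi^{(1)}\circ\cdots\circ\bi^{(k)}$ as $\prod_k q_{\beta_k}^{-n_k(n_k-1)/2}[n_k]^!_{\beta_k}\neq 0$, so $\bi$ appears in $y$ with a nonzero coefficient and is its lex-maximum; hence $\bi$ is good.

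For the \emph{only if} direction, let $x\in\mathbf{f}^*$ satisfy $\max(x)=\bi$ with coefficient $c\neq 0$. For any word $\bj$, let $\varepsilon_{\bj}$ denote the $\Laurent$-linear functional on $\mathbf{'f}^*$ that reads off the coefficient of $\bj$ in the word basis. Fix $r\in\{1,\ldots,k\}$ and form
\[
\Phi_r\ :=\ (\varepsilon_{\bi^{(1)}\cdots\bi^{(r-1)}}\otimes\id\otimes\varepsilon_{\bi^{(r+1)}\cdots\bi^{(k)}})\circ(\De\otimes\id)\circ\De.
\]
Since $\De$ preserves $\mathbf{f}^*$, so does $(\De\otimes\id)\circ\De$, and hence $\Phi_r$ sends $\mathbf{f}^*$ into $\mathbf{f}^*$. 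On the word basis $\Phi_r(\bl)$ equals the unique middle factor $\bk$ in a decomposition $\bl=(\bi^{(1)}\cdots\bi^{(r-1)})\,\bk\,(\bi^{(r+1)}\cdots\bi^{(k)})$ when one exists, and is zero otherwise.

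Applied to $x$, the element $\Phi_r(x)\in\mathbf{f}^*$ contains the term $c\cdot\bi^{(r)}$ coming from $\bl=\bi$. For any other $\bl<\bi$ appearing in $x$ with such a decomposition, the $Q_+$-homogeneity of the middle component forces $|\bk|=|\bi^{(r)}|$, so $\bk$ and $\bi^{(r)}$ have the same total word-length; a direct letter-by-letter comparison then gives $\bk<\bi^{(r)}$. Therefore $\max(\Phi_r(x))=\bi^{(r)}$ with nonzero coefficient $c$, and $\bi^{(r)}$ is good. The main delicate step is precisely this final lex comparison: one must observe that the shared prefix $\bi^{(1)}\cdots\bi^{(r-1)}$ and suffix $\bi^{(r+1)}\cdots\bi^{(k)}$, together with the $Q_+$-equality of the middle weights, force any strict inequality to occur inside the middle slot.
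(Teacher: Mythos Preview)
The paper does not supply its own proof of this lemma; it is stated with a citation to \cite{LR} and \cite[Proposition~17]{Lec} and no argument follows. So there is nothing in the paper to compare against directly, and the relevant question is whether your argument stands on its own.

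It does. The \emph{if} direction is the standard one and is carried out correctly: the shuffle product of homogeneous witnesses lies in $\mathbf{f}^*$, Lemma~\ref{LLex} bounds the lex-maximum, and Corollary~\ref{CTough} shows the coefficient of $\bi$ is nonzero. Your \emph{only if} direction via the iterated coproduct and the slicing functionals $\Phi_r$ is also correct and is essentially the mechanism behind Leclerc's proof: since $\De$ preserves $\mathbf{f}^*$, applying linear functionals in the outer tensor slots keeps the middle slot inside $\mathbf{f}^*$. The final lex comparison is the only subtle point, and your justification is right once one notes that $|\bk|=|\bi^{(r)}|$ forces $\bk$ and $\bi^{(r)}$ to have the same \emph{length}, so the prefix, middle, and suffix of $\bl$ and $\bi$ align position by position and the first discrepancy must fall inside the middle block.

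One small point you should make explicit: at the start of the \emph{only if} argument, replace $x$ by its homogeneous component in $(\mathbf{f}^*)_{|\bi|}$. This is harmless (the projection stays in $\mathbf{f}^*$ and still has $\bi$ as its lex-maximum), but it is what guarantees that every word $\bl$ appearing in $x$ has weight $|\bi|$, which you use when concluding $|\bk|=|\bi^{(r)}|$.
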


For Cartan data of finite type,  good Lyndon words can be classified using the following fact first noticed in {\rm \cite{LR}}, see also \cite[Proposition 18]{Lec}:

\begin{Lemma}\label{TWords}  %
If the Cartan datum is of finite type then the map $\bi\mapsto|\bi|$ is a bijection between the set of good Lyndon words and the set $\De_+$ of positive roots. 
\end{Lemma}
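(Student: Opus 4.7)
The plan is to count good Lyndon words by weight and compare the resulting generating function with the Kostant partition generating function for $\dim\mathbf f_\al$.

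The first step is to show that $|\words_+^\al| = \dim_{\Q(q)} \mathbf{f}_\al$ for every $\al\in Q_+$. For each $\bi\in\words_+^\al$, by the definition of goodness there exists $x_\bi\in \mathbf{f}_\al^*$ with $\max(x_\bi)=\bi$ (the weight grading of $\mathbf f^*$ ensures we may take $x_\bi$ homogeneous of weight $\al$). These elements are linearly independent by triangularity with respect to the basis $\words$ of $\mathbf{'f}^*$. Conversely, if $y\in \mathbf{f}_\al^*$ is nonzero then $\max(y)\in\words_+^\al$ by definition, so we may eliminate the leading term of $y$ by a scalar multiple of $x_{\max(y)}$ and induct on the lexicographic order to express $y$ in terms of the $x_\bi$. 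Thus $\{x_\bi\}_{\bi\in\words_+^\al}$ is a basis of $\mathbf f_\al^*$, and so $|\words_+^\al|=\dim \mathbf{f}_\al^*=\dim \mathbf{f}_\al$.

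Next, Lemma~\ref{LLR1} identifies good words of weight $\al$ bijectively with multisets of good Lyndon words whose weights sum to $\al$ (the canonical factorization gives the unique non-increasing arrangement). This yields the formal identity
$$
\sum_{\al\in Q_+}|\words_+^\al|\,t^\al \;=\; \prod_{\bi\text{ good Lyndon}}\frac{1}{1-t^{|\bi|}}
$$
in $\Z[[Q_+]]$. On the other hand, under the finite type assumption, $\mathbf f$ has a PBW basis indexed by Kostant partitions, i.e.\ by functions $\De_+\to \Z_{\geq 0}$; since every positive root in a finite type root system occurs with multiplicity one, this yields
$$
\sum_{\al\in Q_+} (\dim \mathbf f_\al)\,t^\al \;=\; \prod_{\be\in\De_+}\frac{1}{1-t^\be}.
$$

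Combining the three displays produces the formal identity
$$
\prod_{\bi\text{ good Lyndon}}\frac{1}{1-t^{|\bi|}} \;=\; \prod_{\be\in\De_+}\frac{1}{1-t^\be}
$$
in $\Z[[Q_+]]$. An induction on $\height(\al)$, comparing coefficients of $t^\al$ on both sides, forces the multiset $\{|\bi|:\bi\text{ good Lyndon}\}$ to coincide with $\De_+$; since every positive root appears exactly once in $\De_+$, the map $\bi\mapsto|\bi|$ is a bijection between good Lyndon words and $\De_+$. The main obstacle is the first step—proving that good words parameterize a basis of $\mathbf f^*_\al$—which requires a careful exploitation of the triangularity of the $\max$ operation together with the defining property of goodness; the remainder of the argument is a matter of matching generating functions with the finite type PBW theorem.
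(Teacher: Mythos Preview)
Your argument is correct. The paper itself does not prove this lemma---it is stated with a citation to \cite{LR} and \cite[Proposition~18]{Lec}---so there is no in-paper proof to compare against. Your approach is essentially the standard one from those references: show by triangularity that good words of weight $\al$ index a basis of $\mathbf f_\al^*$, invoke Lemma~\ref{LLR1} to translate this into a generating-function identity over good Lyndon words, and match it with the Kostant partition function coming from the finite-type PBW theorem. The final step, deducing equality of the two multisets of exponents from equality of the products $\prod(1-t^\al)^{-1}$, is justified by the height induction you sketch: if the exponents first differ at some $\gamma$ of minimal height, the coefficient of $t^\gamma$ in the quotient of the two products is nonzero, contradicting equality. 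One small remark: your induction ``on the lexicographic order'' in the first step is really an induction on a finite totally ordered set, since $\words^\al$ is finite; this is worth saying explicitly since the lexicographic order on all of $\words$ is not well-founded.
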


Denote the inverse of the bijection from Lemma~\ref{TWords} as follows: 
\begin{equation}\label{EBij}
\be\mapsto \bi(\be)\qquad(\be\in\De_+)
\end{equation}
We use the bijection (\ref{EBij}) to transport the lexicographic order on good Lyndon words to a total order `$<$' on the set $\De_+$. 
For a natural choice of the lexicographic order on $I$ the map (\ref{EBij}) is described explicitly in \cite[\S 3]{LR} and \cite[ \S 8]{Lec}.

\begin{Lemma}\label{LCon}
Let the Cartan datum be of finite type, and let $\beta,\ga_1,\dots,\ga_r\in\De_+$ with $\beta>\ga_1\geq\dots\geq\ga_r$ and $n\in\Z_{\geq0}$. Then $n\beta\neq\ga_1+\dots+\ga_r$. 
\end{Lemma}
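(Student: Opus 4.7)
The plan is to reduce the lemma to a short positivity argument via a convexity property of the order $<$ on $\De_+$. The strategy is to exhibit a linear functional $\varphi$ on the real span of $Q_+$ satisfying $\varphi(\be)=1$ and $\varphi(\ga)\le 0$ for every $\ga\in\De_+$ with $\ga<\be$. Applying such a $\varphi$ to the hypothetical identity $n\be=\ga_1+\cdots+\ga_r$ yields $n=\sum_l\varphi(\ga_l)\le 0$, forcing $n=0$; the only way to have $0=\ga_1+\cdots+\ga_r$ in $Q_+$ with each $\ga_l\in\De_+$ is then $r=0$, so there is nothing to contradict.

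To produce $\varphi$, I would invoke the theorem of Leclerc \cite{Lec} (building on \cite{LR}) that identifies the total order on $\De_+$ transported from the lexicographic order on good Lyndon words with a \emph{convex} order arising from a reduced expression of the longest Weyl group element. Concretely, there exists a reduced decomposition $w_0=s_{i_1}s_{i_2}\cdots s_{i_N}$ such that, putting $\be_k:=s_{i_1}\cdots s_{i_{k-1}}\al_{i_k}$, one has $\De_+=\{\be_1<\be_2<\cdots<\be_N\}$.

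Given this, write $\be=\be_k$ and $w:=s_{i_1}\cdots s_{i_{k-1}}$. A standard computation with reduced words then shows $\{\ga\in\De_+:\ga<\be\}=\{\be_1,\dots,\be_{k-1}\}=\{\ga\in\De_+:w^{-1}\ga\in-\De_+\}$. Letting $\omega_{i_k}$ denote the fundamental coweight dual to $\al_{i_k}$ (so $\langle\omega_{i_k},\al_j\rangle=\de_{i_k,j}$), I set $\varphi(\de):=\langle\omega_{i_k},w^{-1}\de\rangle$. Then $\varphi(\be)=\langle\omega_{i_k},\al_{i_k}\rangle=1$, whereas for $\ga<\be$ the vector $-w^{-1}\ga$ lies in $Q_+$ and so has non-negative simple root coefficients; hence $\varphi(\ga)=-\langle\omega_{i_k},-w^{-1}\ga\rangle\le 0$, as required.

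The main obstacle is the convexity statement in the second paragraph: it is not a formal consequence of the definition of the order on $\De_+$ and must be extracted from Leclerc's analysis of good Lyndon words in \cite{Lec}. Once that identification is granted, the remainder of the argument is a one-line positivity computation.
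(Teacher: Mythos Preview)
Your proof is correct and follows essentially the same route as the paper: both invoke Leclerc's result that the lexicographic order on good Lyndon words corresponds to a convex order coming from a reduced expression of $w_0$, and then apply the appropriate Weyl group element $w$ so that $w^{-1}\be$ is a positive simple root while each $w^{-1}\ga_l$ is negative. The paper concludes directly from the fact that one side of the equality lands in $Q_+$ and the other in $-Q_+$, whereas you project further onto a single coordinate via the fundamental coweight $\omega_{i_k}$ --- an extra step that is harmless but unnecessary.
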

\begin{proof}
Let $\De_+=\{\beta_1,\dots,\be_N\}$ with 
$
\be_1>\be_2>\dots>\be_N.
$ 
It is pointed out in \cite[\S 4.5]{Lec} that 
there is 
a (unique) reduced decomposition $w_0=r_{i_1}\dots r_{i_N}$ of the lengest element of the Weyl group such that 
$\be_1=r_{i_1}\dots r_{i_{N-1}}(\al_{i_N}),\be_2=r_{i_1}\dots r_{i_{N-2}}(\al_{i_{N-1}}),\dots,\be_N=\al_{i_1}$. 

Now, let $\beta=r_{i_1}\dots r_{i_s}(\al_{i_{s+1}})$. Then the element $w:=r_{i_s}\dots r_{i_1}$ of the Weyl group maps all positive roots smaller than $\beta$ to negative roots. In particular, $w(\ga_1),\dots,w(\ga_r)$ are negative. On the other hand $w(\beta)=\al_{i_{s+1}}$ is positive, which immediately implies the required result.  
\end{proof}

The following result slightly generalizes \cite[Corollary 27]{Lec}. 

\begin{Lemma}\label{LFishy2}
Let the Cartan datum be of finite type, $n\in \Z_{\geq 0}$, and $\be\in\De_+$. Then $\bi(\be)^n$ is the smallest good word in $\words^{n\be}$. 
\end{Lemma}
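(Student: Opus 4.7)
The plan is as follows. First, $\bi(\be)^n$ is a good word: its canonical factorization consists of $n$ copies of the good Lyndon word $\bi(\be)$, so Lemma~\ref{LLR1} applies. For the minimality, let $\bj \in \words_+^{n\be}$ be arbitrary and take its canonical factorization $\bj = \bj^{(1)}\cdots\bj^{(k)}$, with the $\bj^{(r)}$ good Lyndon words (Lemma~\ref{LLR1}). Setting $\ga_r := |\bj^{(r)}| \in \De_+$ via the bijection (\ref{EBij}), one has $\ga_1 \geq \cdots \geq \ga_k$ in the transported order on $\De_+$, and $\sum_r \ga_r = n\be$.

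Lemma~\ref{LCon} now forces a dichotomy. Suppose $\ga_1 = \cdots = \ga_p = \be > \ga_{p+1}$ for some $0 \leq p < n$; then $(n-p)\be = \ga_{p+1}+\cdots+\ga_k$ with $\be > \ga_{p+1} \geq \cdots \geq \ga_k$, contradicting Lemma~\ref{LCon} since $n-p \geq 1$. Therefore either all $\ga_r = \be$, in which case $k = n$ and $\bj = \bi(\be)^n$, or $\ga_1 > \be$, equivalently $\bj^{(1)} > \bi(\be)$ as Lyndon words. In the latter case one still has to show $\bj > \bi(\be)^n$ in lex order on words.

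To do so, iteratively peel copies of $\bi(\be)$ off the start of $\bj^{(1)}$. Since $\bi(\be)^m$ is not Lyndon for $m \geq 2$ (its proper right factor $\bi(\be)$ is also a proper prefix of $\bi(\be)^m$, hence strictly smaller than $\bi(\be)^m$) and $\bj^{(1)} \neq \bi(\be)$, the peeling ends in a decomposition $\bj^{(1)} = \bi(\be)^m \bu$ with $m \geq 0$, $\bu$ a non-empty right factor of $\bj^{(1)}$, and $\bi(\be)$ not a prefix of $\bu$. Either $m=0$ (and $\bu=\bj^{(1)}>\bi(\be)$ by assumption) or the Lyndon property of $\bj^{(1)}$ gives $\bu > \bj^{(1)} > \bi(\be)$; in both cases $\bu > \bi(\be)$. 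Since $\bi(\be)$ is not a prefix of $\bu$, the first differing letter (within the shorter of $\bu$ and $\bi(\be)$) must make $\bu$ larger. Comparing $\bj$ with $\bi(\be)^n$ past their common prefix $\bi(\be)^m$ (with $m < n$ for length reasons) then yields $\bj > \bi(\be)^n$. The main difficulty is precisely this lex comparison: because $\bj^{(1)}$ may itself begin with several copies of $\bi(\be)$, a direct letter-by-letter comparison of $\bj^{(1)}$ with $\bi(\be)$ does not suffice, and the Lyndon property of $\bj^{(1)}$ is what makes the peeling argument work.
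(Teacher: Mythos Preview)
Your proof is correct and, like the paper's, hinges on Lemma~\ref{LCon}. The difference lies in how the word comparison is handled. The paper argues by contradiction: assuming $\bj<\bi(\be)^n$, it invokes a combinatorial fact from \cite{Me} stating that if $\bk<\bl$ as words then their canonical factorizations are lexicographically ordered factor-by-factor; this immediately yields $\bj=\bi(\be)\cdots\bi(\be)\bj^{(t)}\cdots\bj^{(m)}$ with $\bj^{(t)}<\bi(\be)$, and Lemma~\ref{LCon} finishes. You instead work forward: Lemma~\ref{LCon} gives the dichotomy $\ga_1>\be$ or all $\ga_r=\be$, and in the nontrivial case you establish $\bj>\bi(\be)^n$ directly via the Lyndon property of $\bj^{(1)}$ and the peeling argument. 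Your route is more self-contained, avoiding the citation to \cite{Me}; the paper's route is shorter once that external result is granted.
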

\begin{proof}  Let $\bi = \bi(\beta)$.
Let $\bj$ be a good word of weight $n\beta$ and assume that $\bj< \bi^n$.  Let
$\bj = \bj^{(1)}\dots \bj^{(m)}$ be the canonical factorization of $\bj$.  By the discussion before Lemma 4.1 in \cite{Me},  if $\bk = \bk^{(1)}\dots \bk^{(r)}$
and $\bl = \bl^{(1)}\dots \bl^{(s)}$ are canonical factorisations 
and $\bk<\bl$, then there exists $t$ such that $\bk^{(u)} = \bl^{(u)}$ for $u<t$ and $\bk^{(t)}< \bl^{(t)}$.  In our
case this means that $\bj = \bi\dots \bi\bj^{(t)}\cdots \bj^{(m)}$ with
$\bi>\bj^{(t)}\ge \dots\geq \bj^{(m)}$. Set $\ga_r:=|\bj^{(r)}|$. Then $\beta > \ga_t\ge \dots \ge\ga_m$ and 
$\ga_t+\cdots+\ga_m = (n-t+1)\beta.$ By Lemma~\ref{LCon}, this is a contradiction. 
\end{proof}

\section{Cuspidal modules}
In this section we always assume that the Cartan datum is of finite type. 

\subsection{Dual-canonical basis}
The {\em dual-canonical basis} $\{b_{\bi}^*\mid\bi\in\words_+\}$ 
of $\mathbf{f}_\Laurent^*$ is defined in \cite{Lec}. Its elements are labeled by the set of good words $\words_+$ and can be computed using the algorithm of \cite[\S5.5]{Lec}. The first step of the algorithm computes $b_\bi^*$ when $\bi$ is a good Lyndon word, see \cite[\S5.5.2, \S5.5.4, \S8]{Lec}. 
The dual-canonical basis is well-behaved with respect to the weight space decomposition, i.e. 
\begin{equation}\label{ECanBasisWeight}
\{b_{\bi}^*\mid\bi\in\words_+^\al\}
\end{equation}
is an $\Laurent$-basis of the weight space $(\mathbf{f}_\Laurent^*)_\al$ for any $\al\in Q_+$. 

Recall that we always identify $\mathbf{f}_\Laurent^*$ with a subalgebra of $\mathbf{'f}_\Laurent^*$ using the map $\iota$, and so we consider $b_{\bi}^*$ as elements of $\mathbf{'f}_\Laurent^*$.

\begin{Lemma}\label{LMaxCan} {\rm \cite[Theorem 40]{Lec}} 
For all $\bi\in\words_+$, we have $\max(b_\bi^*)=\bi$. 
\end{Lemma}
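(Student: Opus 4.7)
The plan is to induct on the lexicographic order of good words, following the recursive structure of Leclerc's algorithm \cite[\S5.5]{Lec} that defines the dual-canonical basis $\{b_\bi^*\}$. The inductive hypothesis will be that $\max(b_\bk^*)=\bk$ for all good words $\bk<\bi$, and we will handle the Lyndon case and the factorizable case separately.

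For the Lyndon base case, where $\bi$ is a good Lyndon word corresponding under the bijection of Lemma~\ref{TWords} to a positive root $\be=|\bi|$, I would appeal to Leclerc's explicit construction of $b_\bi^*$ in \cite[\S5.5.2, \S5.5.4, \S8]{Lec}. There $b_\bi^*$ is produced as a bar-invariant normalization of a $q$-bracketed shuffle element built recursively from the standard factorization of $\bi$. Unpacking this construction and combining it with the defining property of Lyndon words (lexicographically smaller than every proper right factor) together with \cite[Lemma~15]{Lec}, one sees that the top word of the bracketed element is $\bi$ with a unit coefficient, and the subsequent bar-invariant adjustment only perturbs by terms strictly smaller than $\bi$.

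For the inductive step, let $\bi$ have canonical factorization $\bi=(\bj^{(1)})^{n_1}\cdots(\bj^{(m)})^{n_m}$ with $\bj^{(1)}>\cdots>\bj^{(m)}$ good Lyndon, and set $\be_k:=|\bj^{(k)}|$. Leclerc's algorithm defines $b_\bi^*$ as the unique bar-invariant element of the form
\[
b_\bi^* \;=\; \Bigl(\prod_{k=1}^m q_{\be_k}^{-n_k(n_k-1)/2}[n_k]_{\be_k}^!\Bigr)^{-1}\!\bigl(b_{\bj^{(1)}}^*\bigr)^{\circ n_1}\!\circ\cdots\circ \bigl(b_{\bj^{(m)}}^*\bigr)^{\circ n_m}\;+\;\sum_{\bk<\bi,\,\bk\in\words_+} c_\bk\, b_\bk^*,
\]
with $c_\bk\in\Q(q)$. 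By the Lyndon case, $\max(b_{\bj^{(l)}}^*)=\bj^{(l)}$ for every $l$. Applying Lemma~\ref{LLex}, Lemma~\ref{LTechnical}(i), and Corollary~\ref{CTough} to the normalized shuffle product, its maximum word is exactly $\bi$ with coefficient $1$, and all other words appearing are strictly smaller. By the inductive hypothesis, each correction term $c_\bk b_\bk^*$ contributes words with maximum $\bk<\bi$. Hence $\max(b_\bi^*)=\bi$.

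The main obstacle is the Lyndon base case: one must commit to a particular construction of $b_\bi^*$ for Lyndon $\bi$ (Leclerc's dual root vectors obtained from Lalonde--Ram's standard bracketing) and verify that the combinatorics of Lyndon factorization together with the triangularity of the quantum shuffle product pin down $\bi$ as the leading word. The inductive step, by contrast, is essentially an exercise in applying Corollary~\ref{CTough} and the triangularity provided by the Lyndon case.
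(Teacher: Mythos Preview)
The paper does not prove this lemma at all: it is simply quoted as \cite[Theorem~40]{Lec}, with no argument given. So there is no ``paper's own proof'' to compare against beyond the citation.

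Your sketch is a reasonable outline of how Leclerc's proof of Theorem~40 actually proceeds, and the inductive step for factorizable $\bi$ is correct as written: the combination of Lemma~\ref{LLex}, Corollary~\ref{CTough}, and the inductive hypothesis on the correction terms $b_\bk^*$ with $\bk<\bi$ does force $\max(b_\bi^*)=\bi$. One small structural remark: the Lyndon case is not really a ``base case'' of an induction on the lexicographic order, since good Lyndon words are interspersed among non-Lyndon good words in that order. It is cleaner to phrase the argument as (a) prove the statement for all good Lyndon words, by a separate induction on height using the standard factorization; then (b) induct on lexicographic order for composite words, using (a) for the Lyndon factors and the inductive hypothesis for the correction terms.

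As you yourself flag, the genuine work is entirely in the Lyndon case, and your paragraph there is more of a pointer than a proof. Saying ``unpacking this construction \dots\ one sees that the top word of the bracketed element is $\bi$'' hides exactly the content of the result: one must show that when $\bi=\bj\bk$ is the standard factorization of a Lyndon word, the $q$-bracket $b_\bj^*\circ b_\bk^* - q^{?}\, b_\bk^*\circ b_\bj^*$ (or its appropriate normalization) has leading word $\bj\bk$, which requires knowing that $\max(\bj\circ\bk)=\bk\bj$ and $\max(\bk\circ\bj)=\bk\bj$ \cite[Lemma~15]{Lec} and then controlling the cancellation. This is doable but is precisely the substance of Leclerc's argument, so at this level of detail you are effectively citing \cite{Lec} just as the paper does.
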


Denote the coefficient of $\bi$ in $b_\bi^*$ by $\kappa_\bi$. This coefficient is known in many situations, see e.g. \cite[\S8]{Lec}. For example, for natural orderings of simple roots in finite $A,D,E$ types we always have $\kappa_\bi=1$, see  
\cite[Proposition 56]{Lec}.

\begin{Lemma}\label{LKappa} {\rm \cite[(19)]{Lec}} 
Let the canonical factorization of $\bi\in\words_+$ be written in the form $\bi=(\bj^{(1)})^{n_1}\dots(\bj^{(m)})^{n_m}$ where $\bj^{(1)}>\dots>\bj^{(m)}$ are good Lyndon words, and set $\be_k:=|\bj^{(k)}|$ for $k=1,\dots,m$. 
Then 
$\kappa_\bi=\prod_{k=1}^m \kappa_{\bj^{(k)}}^{n_k}[n_k]_{\beta_k}^!$.
\end{Lemma}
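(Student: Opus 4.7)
The plan is to derive the formula from Leclerc's construction of $b_\bi^*$ as a suitably normalized shuffle product of the dual-canonical basis elements of its Lyndon factors. Let
$$
S := (b_{\bj^{(1)}}^*)^{\circ n_1}\circ\cdots\circ(b_{\bj^{(m)}}^*)^{\circ n_m}.
$$
According to Leclerc's algorithm cited just before the lemma, $b_\bi^*$ is obtained from $S$ in two steps: first multiply $S$ by the $q$-factor $\prod_{k=1}^m q_{\beta_k}^{n_k(n_k-1)/2}$ so that the leading coefficient becomes bar-invariant, then correct by subtracting a $\Laurent$-linear combination of basis elements $b_\bk^*$ with $\bk<\bi$ to enforce bar-invariance of the whole element.

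First, I would show that the correction step does not affect the coefficient of $\bi$. By Lemma \ref{LMaxCan}, each $b_\bk^*$ with $\bk<\bi$ satisfies $\max(b_\bk^*)=\bk<\bi$, so the coefficient of $\bi$ in any such $b_\bk^*$ is zero. Consequently
$$
\kappa_\bi \;=\; \Big(\prod_{k=1}^m q_{\beta_k}^{n_k(n_k-1)/2}\Big)\cdot [\text{coefficient of } \bi \text{ in } S].
$$

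Second, I would compute the coefficient of $\bi$ in $S$. Apply Lemma \ref{LMaxCan} to each Lyndon factor and write $b_{\bj^{(k)}}^*=\kappa_{\bj^{(k)}}\bj^{(k)}+\sum_{\bl<\bj^{(k)}} c_\bl \bl$. Any contribution to $S$ involving at least one strictly smaller summand from some factor produces, by Lemma \ref{LLex}, only words strictly less than $\max(\bj^{(1)\circ n_1}\circ\cdots\circ\bj^{(m)\circ n_m})$, which by Lemma \ref{LTechnical}(i) equals $\bi$. Hence the coefficient of $\bi$ in $S$ is
$$
\Big(\prod_{k=1}^m \kappa_{\bj^{(k)}}^{n_k}\Big)\cdot [\text{coefficient of } \bi \text{ in }\bj^{(1)\circ n_1}\circ\cdots\circ\bj^{(m)\circ n_m}],
$$
and Corollary \ref{CTough} identifies the bracketed coefficient as $\prod_{k=1}^m q_{\beta_k}^{-n_k(n_k-1)/2}[n_k]_{\beta_k}^!$.

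Combining these two steps, the $q$-factors $q_{\beta_k}^{\pm n_k(n_k-1)/2}$ cancel, and one obtains
$\kappa_\bi=\prod_{k=1}^m \kappa_{\bj^{(k)}}^{n_k}[n_k]_{\beta_k}^!$, as required. The main obstacle is simply the bookkeeping of Leclerc's bar-invariance normalization: once it is checked that the precise $q$-power twist needed to bar-stabilize the leading term of $S$ is $\prod_k q_{\beta_k}^{n_k(n_k-1)/2}$ (which matches the two-copy case $\bj\circ\bj=q_\beta^{-1}[2]_\beta(\bj\bj)+(\text{lower})$ and extends to general $n_k$ by Corollary \ref{CTough}), everything else is a direct leading-term computation via Lemmas \ref{LMaxCan} and \ref{LLex}.
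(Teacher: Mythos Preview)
The paper does not actually give its own proof of this lemma: it simply cites \cite[(19)]{Lec} and moves on. Your plan is essentially a reconstruction of Leclerc's argument using the tools developed here (Lemmas~\ref{LLex}, \ref{LTechnical}, \ref{LMaxCan} and Corollary~\ref{CTough}), and it is correct. The two key observations---that the subtraction of lower $b_{\bk}^*$ does not touch the $\bi$-coefficient (via Lemma~\ref{LMaxCan}), and that replacing any factor $b_{\bj^{(k)}}^*$ by one of its strictly smaller terms kills the $\bi$-coefficient of the shuffle (via Lemma~\ref{LLex}, which applies because all words in $b_{\bj^{(k)}}^*$ lie in $\words^{\beta_k}$ and hence have the same length)---are exactly the right ones.

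One small remark on presentation: your description is slightly circular as written, since you invoke the normalizing $q$-factor $\prod_k q_{\beta_k}^{n_k(n_k-1)/2}$ in Step~1 before computing the leading coefficient of $S$ in Step~2 that justifies it. It would read more cleanly to first compute the coefficient of $\bi$ in $S$ via Corollary~\ref{CTough}, then observe that the unique $q$-power making this bar-invariant is $\prod_k q_{\beta_k}^{n_k(n_k-1)/2}$ (using that each $\kappa_{\bj^{(k)}}$ and each $[n_k]_{\beta_k}^!$ is already bar-invariant), and finally note that this is indeed the normalization Leclerc's algorithm prescribes. You flag this yourself in the last paragraph, so the issue is purely expository.
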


\subsection{Cuspidal modules}
Let $\be\in \De_+$. An irreducible module $L\in\Rep{R_\be}$ is called {\em cuspidal} if its highest weight is a good Lyndon word. Recall that we have a bijection (\ref{EBij}) between $\De_+$ and good Lyndon words.  So $L$ is cuspidal if and only if its highest weight is of the form $\bi(\be)$. 

\begin{Lemma}
Let $\beta\in\De_+$. There is at most one cuspidal irreducible module in $\Rep{R_\be}$ (up to isomorphism and degree shift).
\end{Lemma}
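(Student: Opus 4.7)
The plan is to categorify the cuspidal module by the dual-canonical basis element $b^*_{\bi(\beta)}$, exploiting the fact (Lemma~\ref{LFishy2}, case $n=1$) that $\bi(\beta)$ is the \emph{smallest} good word of weight $\beta$. Let $L_1,L_2$ be cuspidal irreducible modules in $\Rep{R_\beta}$, so by definition both have highest weight $\bi(\beta)$. Set $x_i:=\ga^*([L_i])\in (\mathbf{f}^*_\Laurent)_\beta$, so that $\iota(x_i)=\CH L_i$ by Theorem~\ref{Dklthm}(3). By Corollary~\ref{CInjective}, each $[L_i]$ is determined by its $q$-character, so it is enough to show that $\CH L_1$ and $\CH L_2$ are proportional over $\Laurent$.

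Expand $x_i$ in the dual-canonical basis of $(\mathbf{f}^*_\Laurent)_\beta$ (cf.~(\ref{ECanBasisWeight})):
$$
x_i=\sum_{\bj\in\words_+^\beta} a_{i,\bj}\, b_\bj^*,\qquad a_{i,\bj}\in\Laurent.
$$
Since $\max\iota(b^*_\bj)=\bj$ with nonzero leading coefficient $\kappa_\bj$ (Lemma~\ref{LMaxCan}), the largest word appearing in $\iota(x_i)$ is exactly $\max\{\bj\mid a_{i,\bj}\neq 0\}$, which must equal the highest weight $\bi(\beta)$ of $L_i$. Hence $a_{i,\bj}=0$ whenever $\bj>\bi(\beta)$. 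On the other hand, by Lemma~\ref{LFishy2} every good word in $\words^\beta$ satisfies $\bj\geq\bi(\beta)$. These two bounds force $a_{i,\bj}=0$ for all $\bj\neq\bi(\beta)$, so that $x_i=a_i\, b^*_{\bi(\beta)}$ for some nonzero $a_i\in\Laurent$.

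Applying $(\ga^*)^{-1}$ then yields $a_2[L_1]=a_1[L_2]$ in $[\Rep{R_\beta}]$. Using Lemma~\ref{LSelfD} to replace $L_1,L_2$ by their (unique) $\circledast$-self-dual grading shifts and expanding in the resulting $\Laurent$-basis of $[\Rep{R_\beta}]$ afforded by the classes of self-dual irreducibles, the relation $a_2[L_1]=a_1[L_2]$ together with $a_1,a_2\neq 0$ and $\Laurent$ being an integral domain forces the two self-dual versions to be isomorphic. Hence $L_1\cong L_2$ up to a grading shift.

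The only substantive point is the middle step, where the minimality of $\bi(\beta)$ among good words of weight $\beta$ (Lemma~\ref{LFishy2} with $n=1$) interacts with the leading-term formula of Lemma~\ref{LMaxCan} to collapse the dual-canonical expansion of $\ga^*([L_i])$ to a single term; everything else is bookkeeping inside the categorification framework of Theorem~\ref{Dklthm}. I do not anticipate any real obstacle beyond confirming that the leading-term computation is correctly transported through the embedding $\iota$.
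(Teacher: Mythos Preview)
Your proof is correct. Both you and the paper rely on the same two pillars: Lemma~\ref{LFishy2} (with $n=1$), asserting that $\bi(\beta)$ is the smallest good word of weight $\beta$, and the fact that $\CH L_i$ lands in $\mathbf{f}^*_\Laurent$ via Theorem~\ref{Dklthm}. The difference is in packaging. The paper argues more directly and \emph{without} invoking the dual-canonical basis: assuming $L_1\not\cong L_2\langle m\rangle$ for any $m$, the characters $\CH L_1,\CH L_2$ are $\Laurent$-linearly independent (Corollary~\ref{CInjective}), so one can form a nonzero $\Laurent$-linear combination $x\in\mathbf{f}^*$ in which the $\bi(\beta)$-term cancels; then $\max(x)<\bi(\beta)$ is a good word in $\words^\beta$, contradicting Lemma~\ref{LFishy2}.

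Your route expands in the dual-canonical basis and shows $\ga^*([L_i])=a_i\,b^*_{\bi(\beta)}$, which is essentially the content of the next lemma in the paper (Lemma~\ref{LCuspCan}); indeed the paper remarks just after the present lemma that ``using the theory of dual-canonical bases we can prove a stronger result.'' So what you have done is conflate the two steps, proving uniqueness by first establishing (most of) the stronger statement. This buys you the identification with $b^*_{\bi(\beta)}$ for free, at the cost of importing slightly more machinery (Lemma~\ref{LMaxCan} and the basis statement~(\ref{ECanBasisWeight})) than the paper uses for this particular lemma.
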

\begin{proof}
Let $L_1,L_2$ be irreducible modules with highest weight $\bi(\beta)$ such that $L_1\not\cong L_2\langle m\rangle$ for all $m\in \Z$. By Corollary~\ref{CInjective}, the $q$-character map is injective, so $\CH L_1$ and $\CH L_2$ are linearly independent elements of $\mathbf{'f}^*$. So any linear combination of them is a non-zero element of $\mathbf{f}^*$. But there exists such a linear combination $x$ of $\CH L_1$ and $\CH L_2$ in which $\bi(\beta)$ does not appear, and so  $x\in \mathbf{'f}^*$ is a non-trivial linear combination of words $\bj\in \words^\beta$ such that $\bj<\bi(\beta)$. This contradicts Lemma~\ref{LFishy2} (with $n=1$) and the definition of good words. 
\end{proof}

Using the theory of dual-canonical bases we can prove a 
stronger result: 

\begin{Lemma}\label{LCuspCan}
Let $\beta\in\De_+$ and $L\in\Rep{R_\beta}$ be a cuspidal irreducible module such that $L^\circledast\cong L$. Then $\CH L=b_{\bi(\beta)}^*$. In particular, 
$\qdim L_{\bi(\beta)}=\kappa_\bi$.
\end{Lemma}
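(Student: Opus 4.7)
The strategy is to expand $\CH L$ in the dual-canonical basis of $(\mathbf{f}_\Laurent^*)_\be$ and then use triangularity together with bar-invariance to collapse the expansion to a single basis vector with coefficient $1$.

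Since $L^\circledast \cong L$, Theorem~\ref{Dklthm}(3),(7) together show that $\CH L$ may be regarded as a bar-invariant element of $(\mathbf{f}_\Laurent^*)_\be$. By (\ref{ECanBasisWeight}) we may therefore write
\[
\CH L \;=\; \sum_{\bj \in \words_+^\be} c_\bj \, b_\bj^*, \qquad c_\bj \in \Laurent.
\]
By Lemma~\ref{LMaxCan}, each $b_\bj^*$ has maximal word $\bj$ with nonzero leading coefficient $\kappa_\bj$, so distinct $\bj$'s contribute distinct maximal words and no cancellation among leading terms is possible. Hence $\max(\CH L)$ equals the lex-maximum of $\{\bj \in \words_+^\be \mid c_\bj \neq 0\}$.

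Because $L$ is cuspidal, $\max(\CH L) = \bi(\be)$, while by Lemma~\ref{LFishy2} (applied with $n = 1$) the word $\bi(\be)$ is the \emph{smallest} good word in $\words^\be$. A subset of $\words_+^\be$ whose maximum equals the global minimum $\bi(\be)$ must be the singleton $\{\bi(\be)\}$. Consequently
\[
\CH L \;=\; c \cdot b_{\bi(\be)}^* \qquad \text{for a single } c \in \Laurent,
\]
and bar-invariance of both sides forces $\bar c = c$.

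The final step --- showing $c = 1$ --- is the main obstacle. The plan is to pin down $c$ using the explicit description of $b_{\bi(\be)}^*$ for the good Lyndon word $\bi(\be)$ from \cite[\S5.5.2, \S8]{Lec}: there $\kappa_{\bi(\be)}$ is identified with a specific monomial in $q$, and the equation $c\,\kappa_{\bi(\be)} = \qdim L_{\bi(\be)} \in \Z_{\geq 0}[q,q^{-1}]$ together with $\bar c = c$ constrains $c$ to a symmetric non-negative Laurent polynomial. Irreducibility of $L$ combined with the injectivity of $\CH$ (Corollary~\ref{CInjective}) then rules out $c \neq 1$: any bar-invariant $c \in \Z_{\geq 0}[q+q^{-1}]$ larger than $1$ would express $[L]$ as a nontrivial $\Laurent$-multiple of a class in the image of $\CH$, contradicting simplicity. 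Once $c = 1$ is secured, reading off the coefficient of $\bi(\be)$ in $\CH L = b_{\bi(\be)}^*$ yields $\qdim L_{\bi(\be)} = \kappa_{\bi(\be)} = \kappa_\bi$.
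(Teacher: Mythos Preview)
Your reduction to $\CH L = c\cdot b_{\bi(\be)}^*$ with $\bar c=c$ is exactly the paper's argument. The gap is in the final step, where two things go wrong.

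First, the claim that $\kappa_{\bi(\be)}$ is a monomial in $q$ is false in general: already in type $G_2$ with $\be=2\al_1+\al_2$ one has $\kappa_{\bi(\be)}=[2]_1=q+q^{-1}$, and similar non-monomial values occur in types $B$, $C$, $F_4$. Consequently $c\kappa_{\bi(\be)}\in\Z_{\ge 0}[q,q^{-1}]$ does not force $c$ to have non-negative coefficients (for instance $(q^2-1+q^{-2})(q+q^{-1})=q^3+q^{-3}$). Second, even if $c$ were a non-negative bar-invariant Laurent polynomial, the appeal to ``simplicity'' does not conclude: irreducibility of $L$ says nothing about indivisibility of $[L]$ in the Grothendieck group, and you have not shown that $b_{\bi(\be)}^*$ is itself the character of an actual module, so there is nothing for $L$ to ``decompose'' into.

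The paper closes this gap differently. Since the $q$-characters of the self-dual irreducible $R_\be$-modules form an $\Laurent$-basis of $(\mathbf f_\Laurent^*)_\be$ (Theorem~\ref{Dklthm}(3)), and the dual-canonical basis is another $\Laurent$-basis of the same free $\Laurent$-module, the change-of-basis coefficient $c$ must be a unit in $\Laurent$, i.e.\ $c=\pm q^n$. Bar-invariance then gives $c=\pm 1$. Ruling out $c=-1$ requires one further nontrivial input: by \cite[14.4.2, 14.4.3]{Lubook} at least one coefficient of $b_{\bi(\be)}^*$ in the word basis lies in $\Z_{>0}[q,q^{-1}]$, which combined with the manifest non-negativity of $\CH L$ excludes the minus sign.
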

\begin{proof}
By Theorem~\ref{Dklthm}(4), we have $\CH L\in (\mathbf{f}_\Laurent^*)_\beta$. By (\ref{ECanBasisWeight}),  $\{b_{\bi}^*\mid\bi\in\words_+^\beta\}$ is  
an $\Laurent$-basis of $(\mathbf{f}_\Laurent^*)_\be$, so $\CH L=\sum_{\bi\in\words_+^\beta}a_\bi b_{\bi}^*$ for some $a_\bi\in\Laurent$. By definition of the cuspidal modules and Lemmas~\ref{LFishy2} and \ref{LMaxCan} we conclude that $a_\bi=0$ unless $\bi=\bi(\beta)$, i.e. $\CH L=a_{\bi(\beta)}b_{\bi(\beta)}^*$. On the other hand, the $q$-characters of the graded irreducible $R_\be$-modules also form an $\Laurent$-basis of  $(\mathbf{f}_\Laurent^*)_\beta$, see Theorem~\ref{Dklthm}(3). So $a_{\bi(\beta)}\in\Laurent$ must be invertible, i.e. $a_{\bi(\beta)}=\pm q^n$ for some $n\in\Z$. By \cite[Proposition 39(ii)]{Lec}, the coefficients of $b_{\bi(\beta)}^*$ in the basis of words are bar-invariant, and the same is true for $\CH L$ by Theorem~\ref{Dklthm}(7). So $b_{\bi(\beta)}^*=\pm\CH L$. By definition, $\CH L$ is a $\Z_{\geq 0}[q,q^{-1}]$-combination of $\bj$'s. On the other hand, it follows from \cite[14.4.2,14.4.3]{Lubook} that at least one of the coefficients in the decomposition of $ b_{\bi(\beta)}^*$ with respect to $\bj$'s is in $\Z_{>0}[q,q^{-1}]$. This rules out $b_{\bi(\beta)}^*=-\CH L$. 
\end{proof} 

\subsection{Existence of cuspidal modules}
From now on we 
assume the following

\begin{Hypothesis}\label{Hyp}
For every $\beta\in\De_+$ there exists a cuspidal irreducible module in $\Rep{R_\beta}$. 
\end{Hypothesis}

Let us point out right away that Hypothesis~\ref{Hyp} is actually {\em true}. This follows in the simply laced case from Varagnolo-Vasserot \cite{VV} and in general from  Rouquier \cite{R2}, who prove that the classes $[L]$ of the graded irreducible $R$-modules in characteristic zero under the isomorphism $\ga^*$ from Theorem~\ref{Dklthm} correspond to the dual canonical basis elements in $\mathbf{f}_\Laurent^*$. To extend the result from a field of characteristic zero to an arbitrary field, it remains to apply Lemma~\ref{LMaxCan} and a reduction modulo $p$ argument using the fact that the algebras $R_\be$ are defined over $\Z$, see \cite{KL1}.

More importantly, in most cases it is possible to exhibit a very explicit and elementary construction of cuspidal modules. In section~\ref{SMoreCusp} we  do this in all cases except for twelve positive roots in type $E_8$ and nine positive roots in type $F_4$. This will be done for a certain natural ordering on $I$, see \S\ref{SSNO}. 
We will return to this issues in section~\ref{SMoreCusp}, after the construction of irreducible $R_\al$-modules is complete. 

In view of Hypothesis~\ref{Hyp} and Lemma~\ref{LSelfD}, for every $\beta\in\De_+$  there exists a unique up to isomorphism self-dual cuspidal irreducible $R_\beta$-module. We denote it by $L_\beta$.

\subsection{Powers of cuspidal modules}
By associativity of induction, we can iterate the notation (\ref{ECircle}). In particular if $L\in\Rep{R_\al}$ then the product $L\circ L\circ \dots \circ L$ with $n$ factors is a module in $\Rep{R_{n\al}}$ denoted $L^{\circ n}$. 

\begin{Lemma}\label{LPower}
Let $\beta\in \De_+$ and $n\in\Z_{> 0}$. 
Then $L_\beta^{\circ n}$ is an irreducible module 
with highest weight $\bi(\beta)^n$, and $\CH L_\beta^{\circ n}=q_\beta^{-n(n-1)/2}b_{\bi(\beta)^n}^*$. In particular, 
$\qdim (L_\beta^{\circ n})_{\bi(\beta)^n}=q_\beta^{-n(n-1)/2}\kappa_{\bi(\beta)}^n[n]_{\beta}^!$. 
\end{Lemma}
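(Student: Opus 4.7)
The plan is to first compute $\CH L_\beta^{\circ n}$ and then deduce irreducibility from the character formula via a short divisibility argument in $\Laurent$.

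For the character, I would combine Lemma~\ref{LCuspCan} (giving $\CH L_\beta = b_{\bi(\beta)}^*$) with Corollary~\ref{CChSh} to reduce to expanding $(b_{\bi(\beta)}^*)^{\circ n} \in (\mathbf{f}_\Laurent^*)_{n\beta}$ in the dual canonical basis $\{b_\bi^* \mid \bi \in \words_+^{n\beta}\}$. The key point is to pin down this expansion via maxima. By Lemma~\ref{LLex} together with Corollary~\ref{CTough}, the largest word appearing in $(b_{\bi(\beta)}^*)^{\circ n}$ equals $\bi(\beta)^n$. By Lemma~\ref{LMaxCan} and a direct non-cancellation argument (the top $\bi$ with $a_\bi \neq 0$ survives, since only $b_\bi^*$ itself contains $\bi$ with a nonzero coefficient), the largest word in $\sum a_\bi b_\bi^*$ equals $\max\{\bi : a_\bi\ne 0\}$. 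Now Lemma~\ref{LFishy2} tells us every good word in $\words^{n\beta}$ is $\geq \bi(\beta)^n$, so only the single term $\bi = \bi(\beta)^n$ can appear. The value of the coefficient $a_{\bi(\beta)^n}$ is then determined by matching the coefficient of the word $\bi(\beta)^n$ itself on both sides: using Lemma~\ref{LLex} only the leading-term shuffles contribute, so Corollary~\ref{CTough} gives $\kappa_{\bi(\beta)}^n\,q_\beta^{-n(n-1)/2}[n]_\beta^!$ on the left, while Lemma~\ref{LKappa} says this coefficient equals $a_{\bi(\beta)^n}\cdot\kappa_{\bi(\beta)^n} = a_{\bi(\beta)^n}\cdot\kappa_{\bi(\beta)}^n[n]_\beta^!$ on the right. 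Cancellation yields $a_{\bi(\beta)^n} = q_\beta^{-n(n-1)/2}$.

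For irreducibility, the character formula forces any composition factor of $L_\beta^{\circ n}$ to have highest weight $\leq \bi(\beta)^n$, and Lemmas~\ref{LHW} and \ref{LFishy2} promote this to equality. The uniqueness-of-cuspidal argument (with Lemma~\ref{LFishy2} applied to general $n$ in place of $n=1$) shows that there is a unique self-dual irreducible $M \in \Rep{R_{n\beta}}$ with highest weight $\bi(\beta)^n$ up to grading shift, and the same max-argument as above gives $\CH M = a_0\,b_{\bi(\beta)^n}^*$ for some nonzero $a_0 \in \Laurent$. Therefore $[L_\beta^{\circ n}] = p(q)\,[M]$ for some $p(q) \in \Z_{\geq 0}[q,q^{-1}]$, and applying $\CH$ produces the identity $q_\beta^{-n(n-1)/2} = p(q)\,a_0$ in $\Laurent$. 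Since the left-hand side is a unit in $\Laurent$, so is $p(q)$; but a unit of $\Laurent$ with non-negative coefficients must be a monomial $q^k$. Hence $L_\beta^{\circ n} \cong M\langle k\rangle$ is irreducible. The closing $\qdim$ identity then drops out of the character formula by reading off the coefficient of $\bi(\beta)^n$ and applying Lemma~\ref{LKappa}.

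The main obstacle I anticipate is the coefficient bookkeeping in Step~1 — ensuring no cancellation occurs in the dual-canonical expansion and that $q_\beta^{-n(n-1)/2}$ emerges cleanly as the scalar — whereas the irreducibility deduction itself collapses to the one-line divisibility observation that a unit of $\Z[q,q^{-1}]$ with non-negative coefficients is a monomial.
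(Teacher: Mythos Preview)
Your proof is correct and follows the same route as the paper: compute $\CH L_\beta^{\circ n}$ via Corollaries~\ref{CChSh} and~\ref{CTough}, expand in the dual canonical basis of $(\mathbf{f}_\Laurent^*)_{n\beta}$, and use Lemmas~\ref{LFishy2}, \ref{LMaxCan}, \ref{LKappa} to isolate the single term $q_\beta^{-n(n-1)/2}b_{\bi(\beta)^n}^*$. The paper's proof is terser on the irreducibility deduction---it records only that all composition factors share highest weight $\bi(\beta)^n$ and then moves straight to the character formula---so your explicit uniqueness-plus-unit-divisibility argument is a welcome elaboration of what the paper leaves implicit rather than a different method.
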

\begin{proof}
By Lemmas~\ref{LLex}, \ref{LHW} and \ref{LFishy2}, all composition factors of $L_\beta^{\circ n}$ have highest weight $\bi(\beta)^n$. Moreover, by Lemma~\ref{LCuspCan}, $\qdim (L_\beta)_{\bi(\be)}=\kappa_{\bi(\beta)}$. 
So, by Corollaries~\ref{CChSh} and \ref{CTough}, we have $\qdim (L_\beta^{\circ n})_{\bi(\beta)^n}=q_\beta^{-n(n-1)/2}\kappa_{\bi(\beta)}^n[n]_{\beta}^!$. 


By Theorem~\ref{Dklthm}, we have $\CH L_\beta^{\circ n}\in (\mathbf{f}_\Laurent^*)_{n\beta}$. As $\{b_{\bi}^*\mid\bi\in\words_+^{n\beta}\}$ is 
an $\Laurent$-basis of $(\mathbf{f}_\Laurent^*)_{n\beta}$, we have $\CH L_\beta=\sum_{\bi\in\words_+^{n\beta}}a_\bi b_{\bi}^*$ for some $a_\bi\in\Laurent$. By 
Lemmas~\ref{LFishy2} and \ref{LMaxCan} we conclude that $a_\bi=0$ unless $\bi=\bi(\beta)^n$. To see that $a_{\bi(\beta)^n}=q_\beta^{-n(n-1)/2}$ we compare the $\bi(\beta)^n$-coefficients using Lemma~\ref{LKappa}. 
\end{proof}

\section{Classification of irreducible modules}
We continue working with Cartan data of finite type. 

\subsection{Standard modules} Let $\bi\in \words^\al_+$. 
Write the canonical factorization of $\bi$ in the form 
$
\bi=(\bj^{(1)})^{n_1}\dots(\bj^{(m)})^{n_m},
$ 
where $\bj^{(1)}>\dots>\bj^{(m)}$ are good Lyndon words. Set $\beta_k:=|\bj^{(k)}|\in\De_+$ for $k=1,\dots,m$. 
Denote 
$$
s(\bi):=\sum_{k=1}^m(\beta_k\cdot\beta_k)n_k(n_k-1)/4,
$$
cf. \cite[\S5.5.3]{Lec}. Define the {\em standard module} of highest weight $\bi$ as the following induced $R_{\al}$-module with a degree shift: 
$$
\De(\bi):=L_{\be_1}^{\circ n_1}\circ\dots\circ L_{\be_m}^{\circ n_m}\langle s(\bi)\rangle.
$$

\begin{Lemma}\label{LDelta}
Let $\bi\in\words_+$. Then $\max(\CH \De(\bi))=\bi$, and $\qdim \De(\bi)_\bi=\kappa_\bi$. 
\end{Lemma}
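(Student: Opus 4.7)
The plan is to compute $\CH\De(\bi)$ directly by using the multiplicativity of characters under induction together with the explicit characters of the cuspidal powers already established in Lemma~\ref{LPower}. I would proceed in three stages.

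First, by associativity of induction and Corollary~\ref{CChSh}, the degree shift $\langle s(\bi)\rangle$ multiplies $\ch_q$ by $q^{s(\bi)}$, so
\[
\CH\De(\bi)=q^{s(\bi)}\,\bigl(\CH L_{\be_1}^{\circ n_1}\bigr)\circ\cdots\circ\bigl(\CH L_{\be_m}^{\circ n_m}\bigr).
\]
By Lemma~\ref{LPower}, each factor has the form $\CH L_{\be_k}^{\circ n_k}=c_k\,\bi^{(k)}+(\text{strictly smaller terms})$, where I write $\bi^{(k)}:=(\bj^{(k)})^{n_k}$ and $c_k:=q_{\be_k}^{-n_k(n_k-1)/2}\kappa_{\bj^{(k)}}^{\,n_k}[n_k]_{\be_k}^{!}$.

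Second, I would invoke Lemma~\ref{LLex} iteratively: the maximum word appearing in the shuffle product of the characters equals the maximum word in $\bi^{(1)}\circ\cdots\circ\bi^{(m)}$, and the coefficient of this maximum equals $c_1\cdots c_m$ times the coefficient of the same maximum in $\bi^{(1)}\circ\cdots\circ\bi^{(m)}$ (any substitution of a strictly smaller factor produces strictly smaller shuffles). To identify the maximum, note that the concatenation itself is one valid shuffle of the blocks $\bi^{(k)}$ and produces $\bi$; conversely, any shuffle of the blocks refines to a shuffle of the individual Lyndon words $\bj^{(k)}$, and Lemma~\ref{LTechnical}(i) then bounds every such shuffle from above by $\bi$. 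Hence $\max(\bi^{(1)}\circ\cdots\circ\bi^{(m)})=\bi$, which already yields the first assertion of the lemma.

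Third, to pin down the coefficient of $\bi$ in $\bi^{(1)}\circ\cdots\circ\bi^{(m)}$, I would compute the coefficient of $\bi$ in the iterated shuffle $(\bj^{(1)})^{\circ n_1}\circ\cdots\circ(\bj^{(m)})^{\circ n_m}$ in two different ways. Directly, Corollary~\ref{CTough} says this coefficient is $\prod_k q_{\be_k}^{-n_k(n_k-1)/2}[n_k]_{\be_k}^{!}$. Alternatively, expanding each $(\bj^{(k)})^{\circ n_k}$ and using Lemma~\ref{LLex} (strict form) shows that the coefficient of $\bi$ receives contributions only from the leading term $\bi^{(k)}$ in each factor, so it equals $\prod_k q_{\be_k}^{-n_k(n_k-1)/2}[n_k]_{\be_k}^{!}$ times the coefficient of $\bi$ in $\bi^{(1)}\circ\cdots\circ\bi^{(m)}$. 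Comparing forces this last coefficient to be $1$.

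Putting everything together, $\qdim\De(\bi)_\bi=q^{s(\bi)}\prod_k c_k$. Since $q_{\be_k}=q^{(\be_k\cdot\be_k)/2}$, the factor $q^{s(\bi)}$ cancels $\prod_k q_{\be_k}^{-n_k(n_k-1)/2}$ exactly, leaving $\prod_k\kappa_{\bj^{(k)}}^{\,n_k}[n_k]_{\be_k}^{!}$, which equals $\kappa_\bi$ by Lemma~\ref{LKappa}. The main obstacle is the coefficient-identification step in stage three; I would expect trying to enumerate block-shuffles of $\bi^{(1)},\dots,\bi^{(m)}$ equal to $\bi$ by hand to be painful (repeated letters across distinct blocks can make it subtle), whereas the detour through Corollary~\ref{CTough} bypasses this combinatorics entirely.
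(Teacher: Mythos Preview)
Your proof is correct and uses the same key ingredients as the paper (Corollary~\ref{CChSh}, Lemma~\ref{LLex}, Corollary~\ref{CTough}, Lemma~\ref{LKappa}), but the paper organizes them more directly. Rather than grouping the factors into the $m$ blocks $\CH L_{\be_k}^{\circ n_k}$ via Lemma~\ref{LPower}, the paper keeps all $n_1+\cdots+n_m$ cuspidal factors separate, writing $\CH\De(\bi)=q^{s(\bi)}(\CH L_{\be_1})^{\circ n_1}\circ\cdots\circ(\CH L_{\be_m})^{\circ n_m}$, replaces each $\CH L_{\be_k}$ by its highest weight $\bj^{(k)}$ using Lemma~\ref{LLex} and Lemma~\ref{LCuspCan}, and then applies Corollary~\ref{CTough} \emph{once} to the full $(n_1+\cdots+n_m)$-fold shuffle. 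This bypasses your entire stage three: you never need to identify the coefficient of $\bi$ in the block shuffle $\bi^{(1)}\circ\cdots\circ\bi^{(m)}$, because Corollary~\ref{CTough} already gives the coefficient of $\bi$ in $(\bj^{(1)})^{\circ n_1}\circ\cdots\circ(\bj^{(m)})^{\circ n_m}$ directly. Your detour---computing the same quantity two ways to force the block-shuffle coefficient to be $1$---is a nice trick, but unnecessary once you unwind the blocks.
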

\begin{proof}
Adopt the notation of the paragraph preceding the lemma. By Corollary~\ref{CChSh}, we have $\CH\De(\bi)=q^{s(\bi)}(\CH L_{\beta_1})^{\circ n_1}\circ \dots\circ (\CH L_{\beta_m})^{\circ n_m}$. So by Lemma~\ref{LLex} the maximal weight can only come from the shuffle product of the highest weights
$
(\bj^{(1)})^{\circ n_1}\circ\dots\circ (\bj^{(m)})^{\circ n_m}. 
$ 
By Lemma~\ref{LCuspCan},  the multiplicity of $\bj^{(k)}$ in $L_{\beta_k}$ is $\kappa_{\bj^{(k)}}$.  So, by Corollary~\ref{CTough} we have 
$$
\CH \De(\bi)=\Big(q^{s(\bi)}\prod_{k=1}^m \kappa_{\bj^{(k)}}q_{\beta_k}^{-n_k(n_k-1)/2}[n_k]_{\beta_k}^!\Big)\bi+\sum_{\bj<\bi}a_{\bj}\bj.
$$
Since $q^{s(\bi)}=\prod_{k=1}^mq_{\beta_k}^{n_k(n_k-1)/2}$, we get 
$\qdim \De(\bi)_\bi=\prod_{k=1}^m \kappa_{\bj^{(k)}}[n_k]_{\beta_k}^!$, which is $\kappa_\bi$ in view of Lemma~\ref{LKappa}. 
\end{proof}

\subsection{Irreducible modules} Now we prove our main result.

\begin{Theorem}\label{TMainIrr}
Let $\al\in Q_+$ and $\bi\in\words_+^\al$. Then: 
\begin{enumerate}
\item[{\rm (i)}]  The standard $R_\al$-module $\De(\bi)$ has an irreducible head $L(\bi)$.
\item[{\rm (ii)}] The highest weight of $L(\bi)$ is $\bi$, and $\qdim L(\bi)_\bi=\kappa_\bi$. 
\item[{\rm (iii)}] $L(\bi)^\circledast\cong L(\bi)$. 
\item[(iv)] $\{L(\bj)\mid \bj\in\words_+^\al\}$ is a complete and irredundant set of irreducible graded $R_\al$-modules up to isomorphism and degree shift.
\item[(v)] If $\bi=\bj^n$ for a good Lyndon word $\bj$, then $L(\bi)=\De(\bi)$. 
\end{enumerate} 
\end{Theorem}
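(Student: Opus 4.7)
The plan is to prove the parts in the order (iv), (i), (ii), (iii), (v), organised around the following generation principle. Each cuspidal power $L_{\be_k}^{\circ n_k}$ is irreducible by Lemma~\ref{LPower}, hence generated by its highest-weight space, so the outer tensor product $M:=L_{\be_1}^{\circ n_1}\boxtimes\cdots\boxtimes L_{\be_m}^{\circ n_m}$ is a simple $R_{\be_*}:=R_{n_1\be_1,\dots,n_m\be_m}$-module generated by the weight space $M_\bi$. Consequently $\De(\bi)=R_\al\otimes_{R_{\be_*}}M$ is generated over $R_\al$ by $1\otimes M_\bi\subseteq\De(\bi)_\bi$; combining Lemmas~\ref{LPower} and~\ref{LKappa} with the grading shift $\langle s(\bi)\rangle$ gives $\qdim(1\otimes M_\bi)=\kappa_\bi=\qdim\De(\bi)_\bi$ (the second equality from Lemma~\ref{LDelta}), so in fact $1\otimes M_\bi=\De(\bi)_\bi$.

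For (iv), Theorem~\ref{Dklthm}(4) makes $[\Rep{R_\al}]$ a free $\Laurent$-module of rank $|\words_+^\al|$, so there are exactly $|\words_+^\al|$ self-dual irreducibles up to isomorphism, while Lemma~\ref{LHW} sends each irreducible to a highest weight in $\words_+^\al$. Given $\bi\in\words_+^\al$, the nonzero module $\De(\bi)$ has an irreducible quotient $L$; Lemma~\ref{LDelta} gives that the highest weight of $L$ is $\leq\bi$, and the generation principle forces $L_\bi\neq 0$, hence the reverse inequality. Thus the highest-weight assignment is surjective, and by the cardinality match bijective; let $L(\bi)$ denote the unique (up to grading shift) irreducible of highest weight $\bi$.

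For (i), (iv) already shows any irreducible quotient of $\De(\bi)$ is, up to grading shift, $L(\bi)$, so the head is $L(\bi)$-isotypic. To prove it is a single copy in the grading inherited from $\De(\bi)$, I would use Frobenius reciprocity,
\[
\End_{R_\al}(\De(\bi))\cong\Hom_{R_{\be_*}}(M,\Res^\al_{\be_*}\De(\bi)),
\]
together with a Mackey-type filtration of the restriction $\Res^\al_{\be_*}\De(\bi)=\Res^\al_{\be_*}\Ind^\al_{\be_*}M$ indexed by minimal-length double-coset representatives in the symmetric group. The identity coset contributes $M$ itself; every other summand is an induction in which the Lyndon factors of $M$ are shuffled nontrivially, so its highest word lies strictly below $\bi$ (by Lemma~\ref{LLex}), and simplicity of $M$ kills $\Hom(M,-)$ on these summands. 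The result is $\End_{R_\al}(\De(\bi))\cong\FF$ concentrated in degree $0$, so the head is a single $L(\bi)$. Organising this Mackey filtration carefully, and in particular verifying that no non-identity summand contains a copy of $M$, is the main technical obstacle of the proof.

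Parts (ii), (iii) and (v) then follow quickly. For (ii), any nonzero $v\in(\rad\De(\bi))_\bi$ lies in $1\otimes M_\bi=\De(\bi)_\bi$, and simplicity of $M$ gives $R_{\be_*}v=1\otimes M$; hence $R_\al v\supseteq R_\al(1\otimes M)=\De(\bi)$, contradicting $R_\al v\subseteq\rad\De(\bi)\subsetneq\De(\bi)$. So $(\rad\De(\bi))_\bi=0$, and combined with (i) this gives $\qdim L(\bi)_\bi=\qdim\De(\bi)_\bi=\kappa_\bi$. For (iii), $\CH L(\bi)^\circledast=\overline{\CH L(\bi)}$ (Theorem~\ref{Dklthm}(7)) has maximum $\bi$ with bar-invariant coefficient $\kappa_\bi$; (iv) gives $L(\bi)^\circledast\cong L(\bi)\langle m\rangle$ for some $m\in\Z$, and comparing $\bi$-weight graded dimensions (both equal to $\kappa_\bi$) forces $q^m=1$, so $m=0$. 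For (v), if $\bi=\bj^n$ for a good Lyndon $\bj$ of weight $\be$, the canonical factorisation is a single block, $\De(\bi)=L_\be^{\circ n}\langle s(\bi)\rangle$ is already irreducible by Lemma~\ref{LPower}, and hence $L(\bi)=\De(\bi)$.
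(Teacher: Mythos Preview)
Your arguments for (ii), (iii), (iv), and (v) are fine and essentially match the paper's (indeed your ``generation principle'' $1\otimes M_\bi=\De(\bi)_\bi$ is exactly what the paper uses). The problem is your route to (i).

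Showing $\End_{R_\al}(\De(\bi))\cong\FF$ does \emph{not} imply that the head is simple: there exist bricks with non-simple head (e.g.\ over the Kronecker quiver, the representation $\FF^2\rightrightarrows\FF$ with arrows the two coordinate projections has $\End=\FF$ but head $S_1^{\oplus 2}$). So even if your Mackey computation goes through, the deduction ``$\End\cong\FF$, so the head is a single $L(\bi)$'' is a genuine gap. You could rescue the Mackey approach by noting that your filtration argument actually shows the \emph{composition} multiplicity of $M$ in $\Res^\al_{\be_*}\De(\bi)$ is one, and then arguing that a non-simple head would force multiplicity at least two in the quotient $\Res^\al_{\be_*}(\text{head})$; but this is still more work than needed.

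The paper avoids Mackey entirely and gets (i) and (ii) in one stroke, using precisely your generation principle. If $L$ is any irreducible quotient of $\De(\bi)$, Frobenius reciprocity gives
\[
0\neq\Hom_{R_\al}(\De(\bi),L)=\Hom_{R_{\be_*}}\!\bigl(M\langle s(\bi)\rangle,\ \Res^\al_{\be_*}L\bigr),
\]
so the simple module $M\langle s(\bi)\rangle$ embeds into $\Res^\al_{\be_*}L$. Since the $\bi$-weight space of $\Res^\al_{\be_*}L$ equals $L_\bi$ and your generation principle says $\qdim M\langle s(\bi)\rangle_\bi=\kappa_\bi=\qdim\De(\bi)_\bi$, we get $\qdim L_\bi\geq\kappa_\bi$; but $L$ is a quotient of $\De(\bi)$, so also $\qdim L_\bi\leq\kappa_\bi$, hence equality. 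If the head had two summands they would each carry a full $\kappa_\bi$-dimensional $\bi$-weight space, forcing $\qdim\De(\bi)_\bi\geq 2\kappa_\bi$, a contradiction. Thus the head is simple and (ii) follows immediately, with no Mackey filtration required.
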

\begin{proof}
Part (v) is contained in Lemma~\ref{LPower}. Write $
\bi=(\bj^{(1)})^{n_1}\dots(\bj^{(m)})^{n_m},
$ where $\bj^{(1)}>\dots>\bj^{(m)}$ are good Lyndon words. Set $\beta_k:=|\bj^{(k)}|\in\De_+$ for $k=1,\dots,m$. 
Let $L\in\Rep{R_\al}$ be irreducible. If $L$ is in the head of $\De(\bi)$ then we have using Frobenius reciprocity: 
\begin{align*}
0&\neq \Hom_{R_\al}(\De(\bi),L)
\\
&=\Hom_{R_{n_1\beta_1,\dots,n_m\beta_m}}(L_{\beta_1}^{\circ n_1}\boxtimes\dots\boxtimes L_{\beta_m}^{\circ n_m}\langle s(\bi)\rangle,\Res^\al_{n_1\beta_1,\dots,n_m\beta_m} L). 
\end{align*}
By Lemma~\ref{LPower}, the $R_{n_1\beta_1,\dots,n_m\beta_m}$-module $L_{\beta_1}^{\circ n_1}\boxtimes\dots\boxtimes L_{\beta_m}^{\circ n_m}\langle s(\bi)\rangle$ is irreducible, and so it embeds into $\Res^\al_{n_1\beta_1,\dots,n_m\beta_m}L$. By Lemma~\ref{LPower} applied to each $L_{\beta_k}^{\circ n_k}$ and Lemma~\ref{LDelta}, the multiplicity of the weight $\bi$ in the submodule $L_{\beta_1}^{\circ n_1}\boxtimes\dots\boxtimes L_{\beta_m}^{\circ n_m}\langle s(\bi)\rangle$ of $\Res^\al_{n_1\beta_1,\dots,n_m\beta_m}\De(\bi)$ is equal to the multiplicity of the weight $\bi$ in $\De(\bi)_\bi$. As  $L_{\beta_1}^{\circ n_1}\boxtimes\dots\boxtimes L_{\beta_m}^{\circ n_m}\langle s(\bi)\rangle$ also embeds into $L$ we conclude that $\qdim L_\bi=\qdim \De(\bi)_\bi=\kappa_\bi$. Hence the head of $\De(\bi)$ is irreducible. Moreover, 
as $\bi$ is the highest weight of $\De(\bi)$ it is also the highest weight of $L$. 

We have proved (i) and (ii). To see (iii), recall from Lemma~\ref{LSelfD} that there always exists a self-dual shift $L(\bi)\langle n\rangle$. As $\kappa_\bi=\qdim L(\bi)_\bi$ is bar-invariant, we conclude that $n=0$. Finally, part (iv) is obtained by a counting argument. Indeed, we have constructed $|\words_+^\al|$ non-isomorphic irreducible graded $R_\al$-modules $\{L(\bj)\mid \bj\in\words_+^\al\}$. On the other hand, 
the basis (\ref{ECanBasisWeight}) of $(\mathbf f^*_\Laurent)_\al$ is labeled by the same set $\words_+^\al$. Now, to see that we have constructed all irreducible graded $R_\al$-modules, it suffices to use Theorem~\ref{Dklthm}(4). 
\end{proof}

It is crucial in the following conjecture that the Cartan datum is assumed to be of finite type. Analogous statement is known to be false in the affine type $A$, where a more subtle James Conjecture suggests an answer to a similar question. We refer the reader e.g. to \cite{Kbull} for details on that. 

\begin{Conjecture}\label{Conj}
Let the Cartan datum be of finite type and $\al\in Q_+$. 
The formal characters of the irreducible graded $R_\al$-modules are  independent of the characteristic of the ground field $\FF$.
\end{Conjecture}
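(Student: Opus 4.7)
The plan is to reduce the conjecture to the identity $\CH L(\bi)=b_{\bi}^{*}$ holding in every characteristic. Since $b_{\bi}^{*}$ is defined purely combinatorially as an element of $\mathbf f^{*}_{\Laurent}$ (independently of $\FF$), this identity immediately gives characteristic-independence of $\CH L(\bi)$.

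The first observations toward this reduction are already in place. By Hypothesis~\ref{Hyp} (established in the simply-laced case by Varagnolo-Vasserot and in general by Rouquier), cuspidal modules $L_{\be}$ exist for every $\be\in\De_{+}$ and every field $\FF$, and Lemma~\ref{LCuspCan} gives $\CH L_{\be}=b^{*}_{\bi(\be)}$ in any characteristic, so the base case of the induction is secure. Consequently, by Corollary~\ref{CChSh}, the character of each standard module
\[
\De(\bi)=L_{\be_{1}}^{\circ n_{1}}\circ\dots\circ L_{\be_{m}}^{\circ n_{m}}\langle s(\bi)\rangle
\]
is a concrete shuffle-product expression in the $b^{*}_{\bi(\be_{k})}$ multiplied by $q^{s(\bi)}$; in particular $\CH\De(\bi)$ is itself characteristic-independent.

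Next I would set up the decomposition matrix of standards. By Theorem~\ref{TMainIrr} the multiplicities are upper-triangular in the lexicographic order on $\words^{\al}_{+}$ with ones on the diagonal, so
\[
\CH L(\bi)^{\FF}=\CH\De(\bi)-\sum_{\bj<\bi}[\De(\bi):L(\bj)]^{\FF}_{q}\,\CH L(\bj)^{\FF}.
\]
Applying Varagnolo-Vasserot/Rouquier over $\C$, the characteristic-zero characters $\CH L(\bj)^{\C}$ coincide with $b^{*}_{\bj}$, so the numbers $m_{\bi\bj}(q):=[\De(\bi):L(\bj)]_{q}^{\C}$ are the uniquely determined triangular coefficients expressing $\CH\De(\bi)$ in the dual canonical basis. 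An easy induction on $\bi$ with respect to `$<$' now shows that the conjecture is \emph{equivalent} to the equality of decomposition numbers $[\De(\bi):L(\bj)]^{\FF}_{q}=m_{\bi\bj}(q)$ in every characteristic.

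The hard part is exactly this last equality. In characteristic zero one has the geometric input of perverse sheaves on quiver varieties or of categorified dual canonical bases, and this is precisely what one forfeits in positive characteristic. One natural strategy, parallel to the Soergel bimodule approach on the Hecke algebra side, would be to construct, directly over $\Z$, a PBW-type filtration of each $\De(\bi)$ whose subquotients are shifts of the $L_{\be}^{\circ n}$, and to show that this filtration base-changes to the composition series in every characteristic; this would force the decomposition numbers to be characteristic-free. A second possibility is to categorify the algorithm of Leclerc~\cite{Lec} producing $b^{*}_{\bi}$ by exact functors defined over $\Z$. In either approach the principal obstruction is producing the required integral structure and ruling out modular collapse of extensions, and this is where the finite-type hypothesis must enter decisively, since in affine type $A$ precisely such modular collapse is predicted by a James-type conjecture.
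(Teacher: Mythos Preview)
This statement is a \emph{Conjecture} in the paper, not a theorem; the paper does not prove it. The only evidence offered there is Brundan's observation that it holds in finite type~$A$ for representations factoring through a cyclotomic quotient of level two. So there is no proof in the paper to compare your proposal against.

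Your reduction is sound as far as it goes. The characters of the cuspidal modules, and hence of the standard modules $\De(\bi)$, are characteristic-free by Lemma~\ref{LCuspCan} and Corollary~\ref{CChSh}; unitriangularity of the decomposition matrix (Theorem~\ref{TMainIrr}) then makes the conjecture equivalent to characteristic-independence of the graded decomposition numbers $[\De(\bi):L(\bj)]_q$, or equivalently to $\CH L(\bi)=b^*_\bi$ over every field. You correctly flag this last step as the genuine obstruction and do not claim to resolve it. That is exactly the situation: your proposal is an honest reduction to an open problem, not a proof, and the paper itself makes no stronger claim. The two strategies you outline (integral PBW-type filtrations, or categorifying Leclerc's algorithm over $\Z$) are reasonable heuristics, but neither is carried out in the paper, and the finite-type hypothesis does not by itself supply the missing integral rigidity.
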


As pointed out by Brundan, the conjecture is true in finite type $A$ for representations which factor through a cyclotomic Khovanov-Lauda-Rouquier algebra of level two. This follows from \cite[Lemma 9.7 and Remark 8.7]{BS}. 

\subsection{Graded decomposition numbers} 
Let $\al\in Q_+$ and $\bi,\bj\in \words_+^\al$. The corresponding {\em graded decomposition number} is defined to be the graded composition multiplicity (see (\ref{EGCM})): 
$$
d_{\bi,\bj}:=[\De(\bi):L(\bj)]_q.
$$

\begin{Proposition}
Let $\al\in Q_+$. Then: 
\begin{enumerate}
\item[{\rm (i)}] $d_{\bi,\bj}\in\Z_{\geq 0}[q,q^{-1}]$ for all $\bi,\bj\in \words_+^\al$; 
\item[{\rm (ii)}] the graded decomposition matrix $D^\al:=(d_{\bi,\bj})_{\bi,\bj\in \words_+^\al}$ is uni-triangualr, i.e. $d_{\bi,\bi}=1$ and $d_{\bi,\bj}=0$ unless $\bj\leq \bi$. 
\end{enumerate}
\end{Proposition}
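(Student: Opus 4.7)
The plan is to read both statements off the highest-weight calculus that has already been set up, so essentially no new machinery is needed.

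For part (i), this is immediate from the definition (\ref{EGCM}): by definition $d_{\bi,\bj}=\sum_n a_n q^n$ where $a_n\in\Z_{\geq 0}$ is the (ordinary) multiplicity of $L(\bj)\langle n\rangle$ in a graded composition series of $\De(\bi)$. Since $\De(\bi)$ is finite dimensional, only finitely many $a_n$ are non-zero, giving $d_{\bi,\bj}\in\Z_{\geq 0}[q,q^{-1}]$.

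For the triangularity in (ii), I would pass to the Grothendieck group and write
$$[\De(\bi)]=\sum_{\bk\in\words_+^\al}d_{\bi,\bk}[L(\bk)].$$
Applying the $q$-character map (which is $\Laurent$-linear) gives $\CH\De(\bi)=\sum_{\bk}d_{\bi,\bk}\CH L(\bk)$. By Theorem~\ref{TMainIrr}(ii) the highest weight of $L(\bk)$ is $\bk$, so every word $\bk$ with $d_{\bi,\bk}\neq 0$ appears in $\CH\De(\bi)$. But by Lemma~\ref{LDelta} the largest word appearing in $\CH\De(\bi)$ is $\bi$, so $\bk\leq\bi$. This proves $d_{\bi,\bk}=0$ unless $\bk\leq\bi$.

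For the normalization $d_{\bi,\bi}=1$, restrict to the $\bi$-weight space: taking graded dimensions on both sides of $\CH\De(\bi)=\sum_{\bk\leq\bi}d_{\bi,\bk}\CH L(\bk)$ in the coefficient of $\bi$ gives
$$\qdim\De(\bi)_\bi=\sum_{\bk\leq\bi}d_{\bi,\bk}\,\qdim L(\bk)_\bi.$$
If $\bk<\bi$ then $\bi$ is strictly larger than the highest weight $\bk$ of $L(\bk)$, so $L(\bk)_\bi=0$. Hence only the term $\bk=\bi$ survives, yielding $\qdim\De(\bi)_\bi=d_{\bi,\bi}\,\qdim L(\bi)_\bi$. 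By Lemma~\ref{LDelta} and Theorem~\ref{TMainIrr}(ii), both graded dimensions equal $\kappa_\bi$, which is a non-zero element of $\Z_{\geq 0}[q,q^{-1}]$. Since $\Laurent$ is a domain we conclude $d_{\bi,\bi}=1$.

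There is no serious obstacle here; the only point requiring a moment's care is ensuring that the grading shifts line up so that $d_{\bi,\bi}$ is literally $1$ rather than some $q^n$, which is guaranteed because both $\De(\bi)$ and $L(\bi)$ are normalized so that their $\bi$-weight space has the bar-invariant graded dimension $\kappa_\bi$ (the shift $s(\bi)$ in the definition of $\De(\bi)$ is precisely what achieves this, via Corollary~\ref{CTough} and Lemma~\ref{LKappa}).
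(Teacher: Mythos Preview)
Your proof is correct and follows the same approach as the paper, which simply says that (i) is immediate from the definition of graded composition multiplicity and (ii) follows from Theorem~\ref{TMainIrr}. You have unpacked exactly how Theorem~\ref{TMainIrr}(ii) together with Lemma~\ref{LDelta} yields the unitriangularity; the only implicit step worth making explicit is that positivity (part (i)) rules out cancellation when concluding that $\bk$ must appear as a weight of $\De(\bi)$ whenever $d_{\bi,\bk}\neq 0$---equivalently, one can argue directly that any weight of a composition factor is already a weight of the ambient module.
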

\begin{proof}
(i) follows from the definition of the graded composition multiplicity and (ii) follows from Theorem~\ref{TMainIrr}. 
\end{proof}

The following corollary generalizes Theorem~\ref{TMainIrr}(v). 

\begin{Corollary}\label{CArun}
Let $\al\in Q_+$, and let $\bi$ be the minimal element of $\words_+^\al$. Then $\De(\bi)=L(\bi)$. 
\end{Corollary}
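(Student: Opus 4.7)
The proof is essentially immediate from the unitriangularity of the graded decomposition matrix just established. My plan is as follows.

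The key observation is that, by the preceding Proposition, the graded decomposition matrix $D^\al = (d_{\bi,\bj})_{\bi,\bj \in \words_+^\al}$ is unitriangular with respect to the lexicographic order on $\words_+^\al$: one has $d_{\bi,\bi} = 1$ and $d_{\bi,\bj} = 0$ unless $\bj \leq \bi$. Since $\bi$ is assumed to be the \emph{minimal} element of $\words_+^\al$, the only $\bj \in \words_+^\al$ satisfying $\bj \leq \bi$ is $\bj = \bi$ itself. Consequently, $d_{\bi,\bj} = 0$ for all $\bj \neq \bi$ in $\words_+^\al$.

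By Theorem~\ref{TMainIrr}(iv), every composition factor of $\De(\bi)$ is of the form $L(\bj)\langle n\rangle$ for some $\bj \in \words_+^\al$ and $n \in \Z$. The vanishing $d_{\bi,\bj} = 0$ for $\bj \neq \bi$ rules out every factor other than shifts of $L(\bi)$, and the equality $d_{\bi,\bi} = 1$ (regarded as an element of $\Z_{\geq 0}[q,q^{-1}]$) says that the shift $L(\bi)\langle 0\rangle = L(\bi)$ occurs exactly once and no other shift $L(\bi)\langle n\rangle$ with $n \neq 0$ occurs. Thus $\De(\bi)$ has a unique composition factor, namely $L(\bi)$, appearing with multiplicity one, which forces $\De(\bi) \cong L(\bi)$ as graded $R_\al$-modules.

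There is no real obstacle here: all of the content has already been absorbed into the unitriangularity statement. The only point to verify is the reading of the graded multiplicity $d_{\bi,\bi}=1$ in $\Laurent$, which gives both that no nontrivial grading shift of $L(\bi)$ appears and that $L(\bi)$ itself appears exactly once.
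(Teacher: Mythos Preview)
Your argument is correct and matches the paper's approach: the corollary is stated immediately after the unitriangularity Proposition with no explicit proof, the intended reasoning being exactly the one you give.
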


Motivated by Corollary~\ref{CArun} and \cite{LNT} it is reasonable to consider the following 

\begin{Problem} 
\begin{enumerate}
\item[{\rm (i)}] Describe all $\bi\in \words_+$ such that $\De(\bi)=L(\bi)$.
\item[{\rm (ii)}] Describe all $\bi,\bj\in \words_+$ such that $L(\bi)\circ L(\bj)$ is irreducible.
\end{enumerate}
\end{Problem}

\section{Cuspidal modules}\label{SMoreCusp}
For natural orderings of the set $I$ it should always be possible to exhibit explicit constructions of the cuspidal modules. In this section we explain how to do this in all cases except nine positive roots in type $F_4$ and twelve positive roots in type $E_8$. The missing cases should be easy to handle using computer. In simply laced cases, except for the twelve exceptional roots in type $E_8$, the cuspidal modules always belong to an especially nice class of modules studied in \cite{KRhomog} and called homogeneous modules. 

\subsection{Homogeneous modules}\label{SShomog}  
Throughout this subsection we assume that the Cartan datum is of simply laced type. 
A graded $R_\al$-module is called {\em homogeneous} if it is concentrated in one degree. Homogeneous modules have been studied in \cite{KRhomog}, from where 
we cite some necessary results. 

As usual, we work with an arbitrary fixed $\al\in Q_+$ of height $d$. Let  $\bi\in \words^\al$. We call $s_r\in S_d$ an {\em admissible transposition} for $\bi$ if $a_{i_r, i_{r+1}}=0$. 
The {\em weight graph} $G_\al$ is the graph with the set of vertices $\words^\al$, and with $\bi,\bj\in \words^\al$ connected by an edge if and only if $\bj=s_r \bi$ for some admissible transposition $s_r$ for $\bi$. 
Explicit combinatorial descriptions of the connected components of $G_\al$  can be found in \cite{KRhomog}. 

Let $C$ be a connected component of $G_\al$. We say that $C$ is {\em homogeneous} if for each $\bi\in C$ the following condition holds:
\begin{equation}\label{ENC}
\begin{split}
\text{if $i_r=i_s$ for some $r<s$ then there exist $t,u$}
\\  
\text{such that $r<t<u<s$ and $a_{i_r,i_t}=a_{i_r,i_u}=-1$.}
\end{split}
\end{equation}
We say that $C$ is {\em strongly homogeneous} if for each $\bi\in C$ the following two conditions hold:
\begin{equation}\label{ENC1}
\begin{split}
\text{if $i_r=i_s$ for some $r<s$ and $i_m\neq i_r$ for all $r<m<s$}
\\ \text{then there exist exactly two indices $t,u$ such that}
\\  
\text{$r<t<u<s$ and $a_{i_r,i_t}=a_{i_r,i_u}=-1$;}
\end{split}
\end{equation}
\begin{equation}\label{ENC2}
\begin{split}
\text{if $i_r\neq i_s$ for all $r<s$ then there exists at most one $t<s$}
\\  
\text{such that $a_{i_t,i_s}=-1$.}
\end{split}
\end{equation}

\begin{Lemma}\label{LHomComp} 
Let $C$ be a connected component of $G_\al$.
\begin{enumerate}
\item[{\rm (i)}] \cite[Proposition 2.1]{S2}, \cite[\S3.5]{KRhomog} If $C$ is strongly homogeneous then it is homogeneous;
\item[{\rm (ii)}]  {\rm \cite[Lemma 3.3]{KRhomog}} $C$ is homogeneous if and only if the condition (\ref{ENC}) holds for {\em some} $\bi\in C$; 
\item[{\rm (iii)}]  \cite[Proposition 2.5]{S2}, \cite[\S3.5]{KRhomog} $C$ is strongly homogeneous if and only if the conditions (\ref{ENC1}) and (\ref{ENC2}) hold for {\em some} $\bi\in C$.
\end{enumerate}
\end{Lemma}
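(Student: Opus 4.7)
For part (i), fix $\bi\in C$ and suppose $i_r=i_s$ with $r<s$. Choose the minimal $s'$ with $r<s'\le s$ and $i_{s'}=i_r$; then no letter in $(r,s')$ equals $i_r$, so (\ref{ENC1}) applies at the pair $(r,s')$ and produces two indices $t<u$ in $(r,s')\subseteq(r,s)$ with $a_{i_r,i_t}=a_{i_r,i_u}=-1$. Hence (\ref{ENC}) holds for $\bi$, proving (i).

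For parts (ii) and (iii), the forward implications are immediate from the definitions. For the converse, since $C$ is connected by admissible transpositions, it suffices to show that each of the conditions (\ref{ENC}), (\ref{ENC1}), (\ref{ENC2}) is preserved under a single admissible transposition $\bj=s_c\bi$, i.e.\ with $a_{i_c,i_{c+1}}=0$. My plan is a case analysis on how the indices occurring in the condition---$\{p,q\}$ for (\ref{ENC}) and (\ref{ENC1}), and $s$ for (\ref{ENC2})---meet $\{c,c+1\}$. In each case I trace the pair (or index) in $\bj$ back to a canonical pair (or index) in $\bi$, invoke the hypothesis for $\bi$, and transport the conclusion to $\bj$. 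When neither index meets $\{c,c+1\}$ the multiset of letters on the relevant open interval is unchanged, so the count of indices with $a_{i_p,\cdot}=-1$ is trivially preserved.

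The subtle point---and the main obstacle---is that some cases appear to lose a distinguished index when it falls at position $c$ or $c+1$, where the transposition has swapped the letters. These apparent losses are always prevented by admissibility. For example, for (\ref{ENC}) in the case $p<c$, $q=c$, one has $j_p=j_c=i_{c+1}$ and one applies (\ref{ENC}) at the pair $(p,c+1)$ in $\bi$; the worry is that one of the two distinguished indices equals $c$, but that would force $a_{i_p,i_c}=a_{i_{c+1},i_c}=-1$, contradicting $a_{i_c,i_{c+1}}=0$. The identical mechanism disposes of the boundary cases for (\ref{ENC1}) and of the positions $s=c,c+1$ for (\ref{ENC2}) (where a contribution from position $c$ would have to come from $a_{i_c,i_{c+1}}=-1$). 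Once this observation is in place, all remaining cases reduce to routine bookkeeping.
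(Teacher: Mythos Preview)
The paper does not actually prove this lemma; it simply records the statement with citations to \cite{S2} and \cite{KRhomog}. So there is no in-paper argument to compare against, and your direct elementary verification is a genuine addition.

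Your argument is correct. Part (i) is fine exactly as written. For (ii) and (iii), reducing to invariance under a single admissible transposition and then doing a case analysis on how $\{p,q\}$ (resp.\ $s$) meets $\{c,c+1\}$ is precisely the right strategy, and the admissibility $a_{i_c,i_{c+1}}=0$ is the only ingredient needed to close each boundary case. One small sharpening worth making explicit: for (\ref{ENC1}) the count must be \emph{exactly} two, so in the boundary cases one has to exclude apparent \emph{gains} as well as losses (e.g.\ when $q=c+1$ the $\bj$-interval $(p,c+1)$ is one position longer than the corresponding $\bi$-interval $(p,c)$, and one must check that the new position $t=c$ does not contribute). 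The same admissibility observation handles this symmetrically, since $a_{j_p,j_c}=a_{i_c,i_{c+1}}=0$; but your write-up speaks only of ``apparent losses,'' so it is worth flagging that the same mechanism disposes of both directions. With that caveat, the remaining bookkeeping is indeed routine.
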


Let $W$ be the Weyl group corresponding to our fixed Cartan datum with simple reflections $\{r_i\mid i\in I\}$. 
Let $C$ be a homogeneous connected component of $G_\al$. 
Pick $\bi=(i_1,\dots,i_d)\in C$ and define the element $w_C\in W$ as follows: $w_C:=r_{i_d}r_{i_{d-1}}\dots r_{i_1}$. Then $w_C$ depends only on $C$ and not on the chosen representative $\bi\in C$, cf. \cite[Proposition 3.7]{KRhomog}. 

The main theorem on homogeneous representations is:

\begin{Theorem}\label{Thomog} {\rm \cite[Theorems 3.6, 3.10]{KRhomog}} 

\begin{enumerate}
\item[{\rm (i)}] Let $C$ be a homogeneous connected component of $G_\al$. Let $S(C)$ be the vector space  concentrated in degree $0$ with basis $\{v_\bi\mid \bi\in C\}$ labeled by the elements of $C$. 
The formulas
\begin{align*}
e(\bj)v_\bi&=\de_{\bi,\bj}v_\bi \qquad (\bj\in \words^\al,\ \bi\in C),\\
y_r v_\bi&=0\qquad (1\leq r\leq d,\ \bi\in C),\\
\psi_rv_{\bi}&=
\left\{
\begin{array}{ll}
v_{s_r\bi} &\hbox{if $s_r\bi\in C$,}\\
0 &\hbox{otherwise;}
\end{array}
\right.
\quad(1\leq r<d,\ \bi\in C)
\end{align*}
define an action of $R_\al$ on $S(C)$, under which $S(C)$ is a homogeneous irreducible $R_\al$-module. 
\item[{\rm (ii)}] $S(C)\not\cong S(C')$ if $C\neq C'$, and every homogeneous irreducible $R_\al$-module, up to a degree shift, is isomorphic to one of the modules $S(C)$
\item[{\rm (iii)}] If $C$ is strongly homogeneous then the dimension of $S(C)$ is given by the Peterson-Proctor hook formula:
$$
\dim S(C) 
= \frac{d!}{\prod_{\beta\in \Phi(C)}\height(\beta)},
$$
where
$
\Phi(C):=\{\be\in \De_+\mid w_C^{-1}(\be)\in-\De_+\}.
$
\end{enumerate}
\end{Theorem}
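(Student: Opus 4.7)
The plan is to dispatch the three parts of the theorem in order, with the bulk of the work in part (i). For (i), I would verify the KLR defining relations (R1)--(R7) directly on the proposed action. Relations (R1)--(R3), (\ref{R3Y}), (\ref{R3YPsi}), and (\ref{R3Psi}) are immediate from the formulas, using that each $y_r$ acts as zero. The essential preliminary observation is that homogeneity forces $i_r\neq i_{r+1}$ \emph{and} $i_r\neq i_{r+2}$ for every $\bi\in C$ and every admissible $r$: in either case condition (\ref{ENC}) would demand strictly intermediate indices $t,u$ with $r<t<u<r+1$ or $r<t<u<r+2$, leaving fewer than two integers available. With $i_r\neq i_{r+1}$ in hand, both sides of (\ref{R5}) and (\ref{R6}) vanish. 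For (\ref{R4}) the right-hand side reduces to $Q_{i_r,i_{r+1}}(0,0)e(\bi)$, which in the simply-laced setting equals $e(\bi)$ when $a_{i_r,i_{r+1}}=0$ and $0$ otherwise; this matches $\psi_r^2 v_\bi$ computed from the formula (identity if $s_r$ is admissible, zero otherwise).

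The braid relation (\ref{R7}) is the main obstacle. Because $i_r\neq i_{r+2}$ on $C$, the right-hand side vanishes identically, so I must show $(\psi_{r+1}\psi_r\psi_{r+1}-\psi_r\psi_{r+1}\psi_r)v_\bi=0$ for each $\bi\in C$. The argument is an exhaustive case analysis on the admissibility of $s_r$ and $s_{r+1}$ at $\bi$, $s_r\bi$, and $s_{r+1}\bi$, i.e.\ on the Cartan values among $i_r,i_{r+1},i_{r+2}$. In the fully orthogonal case both compositions propagate to $v_{s_rs_{r+1}s_r\bi}=v_{s_{r+1}s_rs_{r+1}\bi}$ and agree; in degenerate cases the relevant intermediate word leaves $C$ and both compositions vanish. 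The precise fact that homogeneity prevents a nonzero mismatch is exactly what (\ref{ENC}) encodes.

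For part (ii), the weight support of $S(C)$ is precisely $C$, so $S(C)\not\cong S(C')$ whenever $C\neq C'$. For completeness, let $L$ be any homogeneous irreducible graded $R_\al$-module. Since each $y_r$ has positive degree $i_r\cdot i_r$, it acts nilpotently, hence as zero, on $L$. The action relations then force the weight support of $L$ to be stable under admissible transpositions and thus a union of connected components of $G_\al$; irreducibility collapses it to a single component $C$. Transporting a weight vector by $\psi$-chains defines a surjection $S(C)\onto L$, and if $C$ failed to be homogeneous then the relations (\ref{R5}), (\ref{R6}), or (\ref{R7}) would force a nonzero $y$-action, a contradiction. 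Hence $C$ is homogeneous and $L\cong S(C)$, exhausting the homogeneous irreducibles.

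For part (iii), I would identify $C$ with the set of reduced expressions of $w_C$ via $\bi=(i_1,\dots,i_d)\mapsto r_{i_d}\cdots r_{i_1}$. Strong homogeneity ((\ref{ENC1}) and (\ref{ENC2})) ensures this map is a bijection: admissible transpositions in $C$ correspond exactly to the commutation moves $r_ir_j=r_jr_i$ for $a_{ij}=0$, while the conditions rule out any genuine braid move, so reduced-expression equivalence in $W$ coincides with the connected-component relation in $G_\al$. The inversion set of $w_C$ is $\Phi(C)$ by construction, and for the (fully commutative, $d$-complete) elements arising this way the Peterson--Proctor hook length formula gives $\dim S(C)=|C|=d!/\prod_{\beta\in\Phi(C)}\height(\beta)$. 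The principal obstacles are the case analysis behind (\ref{R7}) in part (i) and, in part (iii), identifying the strongly homogeneous setting with the combinatorial class for which the Peterson--Proctor formula is available.
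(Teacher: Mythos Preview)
The paper does not prove this theorem here; it is quoted verbatim from \cite{KRhomog}, so there is no in-paper argument to compare against. Your outline is essentially the strategy of that reference and is sound, but two points you use implicitly deserve to be stated. First, your check of (\ref{R4}) and the ``degenerate'' branches of (\ref{R7}) relies on the fact that if $s_r$ is \emph{not} admissible for $\bi\in C$ then $s_r\bi\notin C$; this holds for \emph{any} connected component, because for each pair $p,q\in I$ with $a_{pq}\neq 0$ the subword on the colors $\{p,q\}$ is invariant under admissible transpositions, while a non-admissible $s_r$ alters exactly such a subword. Second, part (i) also asserts that $S(C)$ is irreducible, which you do not address: it follows since each weight space is one-dimensional, so any nonzero submodule is supported on a nonempty subset of $C$ closed under admissible transpositions, hence equals $C$ by connectedness. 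With these two remarks added, your case analysis for (\ref{R7}) goes through as described, and parts (ii)--(iii) are handled as you indicate.
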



\subsection{Natural orderings}\label{SSNO}
If $|I|=\ell$, we denote its elements by $\{1,2,\dots,\ell\}$ where it is understood that $1<2<\dots<\ell$. For each finite type define the {\em natural ordering} on $I$ by:
\begin{align*}
&\beginpicture
\setcoordinatesystem units <1cm,1cm>        
\setplotarea x from -3 to 3, y from -0.5 to 0.5  
{\scriptsize
\multiput {$\circ$} at 1   0 *1 1 0 /      %
\multiput {$\circ$} at -2   0 *1 1 0 /      
\put {$A_\ell$:}     at -5 0
\put {$\ell$}     at 2 0.2   %
\put {$\ell-1$} at 1 0.2   %
\put {$2$}     at -1 0.2   %
\put {$1$}     at -2 0.2   %
\linethickness=1pt                          
\plot -1.95 0  -1.05 0 /
\plot 1.05 0 1.95 0 /
\setlinear
\setdashes <2mm,1mm>          %
\putrule from -0.95 0 to 0.95 0  
}
\endpicture
\\
&\beginpicture
\setcoordinatesystem units <1cm,1cm>        
\setplotarea x from -3 to 3, y from -0.5 to 0.5  
{\scriptsize
\multiput {$\circ$} at 1   0 *1 1 0 /      %
\multiput {$\circ$} at -3   0 *2 1 0 /      
\put {$B_\ell$:}     at -5 0
\put {$\ell$}     at 2 0.2   %
\put {$\ell-1$} at 1 0.2   %
\put {$3$}     at -1 0.2   %
\put {$2$}     at -2 0.2   %
\put {$1$}     at -3 0.2   
\put{${\boldmath <}$} at -2.5 0
\linethickness=1pt                          
\plot -2.97 0.045  -2.03 0.045 /
\plot -2.97 -0.045  -2.03 -0.045 /
\plot -1.95 0  -1.05 0 /
\plot 1.05 0  1.95 0 /
\setlinear
\setdashes <2mm,1mm>          %
\putrule from -0.95 0 to 0.95 0  
}
\endpicture
\\
&\beginpicture
\setcoordinatesystem units <1cm,1cm>        
\setplotarea x from -3 to 3, y from -0.5 to 0.5  
{\scriptsize
\multiput {$\circ$} at 1   0 *1 1 0 /      %
\multiput {$\circ$} at -3   0 *2 1 0 /      
\put {$C_\ell$:}     at -5 0
\put {$\ell$}     at 2 0.2   %
\put {$\ell-1$} at 1 0.2   %
\put {$3$}     at -1 0.2   %
\put {$2$}     at -2 0.2   %
\put {$1$}     at -3 0.2   
\put{${\boldmath >}$} at -2.5 0
\linethickness=1pt                          
\plot -2.97 0.045 -2.03 0.045 /
\plot -2.97 -0.045  -2.03 -0.045 /
\plot -1.95 0  -1.05 0 /
\plot 1.05 0  1.95 0 /
\setlinear
\setdashes <2mm,1mm>          %
\putrule from -0.95 0 to 0.95 0  
}
\endpicture
\\
&\beginpicture
\setcoordinatesystem units <1cm,1cm>        
\setplotarea x from -9 to -3, y from 1 to 2.7  
{\scriptsize
\multiput {$\circ$} at -9 1.5 *1 0 1 /      %
\multiput {$\circ$} at -8   2 *1 1 0 /      %
\multiput {$\circ$} at -5   2 *1 1 0 /      
\put {$D_\ell$:}     at -11 2
\put {$2$}     at -9 1.2   %
\put {$1$}     at -9 2.7   %
\put {$3$} at -7.9 2.2 %
\put {$4$} at -7 2.2   %
\put {$\ell-1$}     at -5 2.2   %
\put {$\ell$}     at -4 2.2   %
\plot -8.95 1.5   -8.05 1.95 /%
\plot -8.95 2.5   -8.05 2.05 / 
\linethickness=1pt                          
\plot   -4.05 2 -4.95 2   /         %
\plot -7.05 2  -7.95 2     /     
\setlinear
\setdashes <2mm,1mm>          %
\putrule from -5.05 2 to -6.95 2  
}
\endpicture
\\
&\beginpicture
\setcoordinatesystem units <1cm,1cm>        
\setplotarea x from -3 to 3, y from -1.2 to 0.5  
{\scriptsize
\multiput {$\circ$} at 1   0 *1 1 0 /      %
\multiput {$\circ$} at -3   0 *2 1 0 /  
\put {$\circ$} at -1  -1    
\put {$\circ$} at 0  0 
\put {$E_\ell$:}     at -5 0
\put {$\ell$}     at 2 0.2   %
\put {$\ell-1$} at 1 0.2   %
\put {$4$}     at -1 0.2   %
\put {$3$}     at -2 0.2   %
\put {$1$}     at -3 0.2   %
\put {$2$}     at -1.2 -0.95   %
\put {$5$}     at 0 0.2   %
\linethickness=1pt                          
\plot -1.95 0  -1.05 0 /
\plot -2.95 0  -2.05 0 /
\plot -0.95 0  -0.05 0 /
\plot -1 -0.05  -1 -0.93 /
\plot 1.05 0 1.95 0 /
\setlinear
\setdashes <2mm,1mm>          %
\putrule from 0.05 0 to 0.95 0  
}
\endpicture
\\
&\beginpicture
\setcoordinatesystem units <1cm,1cm>        
\setplotarea x from -3 to 3, y from -0.5 to 0.5  
{\scriptsize
\multiput {$\circ$} at 1   0 *1 1 0 /      %
\multiput {$\circ$} at -1   0 *2 1 0 /      
\put {$F_4$:}     at -5 0
\put {$4$}     at 2 0.2   %
\put {$3$} at 1 0.2   %
\put {$2$}     at 0 0.2   %
\put {$1$}     at -1 0.2   %
\put{${\boldmath >}$} at 0.5 0
\linethickness=1pt                          
\plot 0.03 0.045 0.97 0.045 /
\plot 0.03 -0.045  0.97 -0.045 /
\plot 1.05 0  1.95 0 /
\plot -0.95 0  -0.05 0 /
\setlinear
}
\endpicture
\\
&\beginpicture
\setcoordinatesystem units <1cm,1cm>        
\setplotarea x from -3 to 3, y from -0.5 to 0.5  
{\scriptsize
\multiput {$\circ$} at 1   0 *1 1 0 /      %
\put {$G_2$:}     at -5 0
\put {$2$}     at 2 0.2   %
\put {$1$} at 1 0.2   %
\put{${\boldmath <}$} at 1.5 0
\linethickness=0.82pt                          
\putrule from 1.03 0.045 to 1.97 0.045 
\putrule from 1.03 -0.045 to 1.97 -0.045 
\putrule from 1.05 0 to 1.95 0 
\setlinear
}
\endpicture
\end{align*}
From now on we always assume that the elements of $I$ are naturally ordered.

\subsection{Cuspidal modules as homogeneous modules}

Good Lyndon words for the natural orderings were described in \cite{LR} and \cite{Lec}. 
Define the positive roots  $\de:=2\al_1+2\al_2+3\al_3+4\al_4+3\al_5+2\al_6+\al_7+\al_8$ and $\eps:=2\al_1+2\al_2+3\al_3+4\al_4+3\al_5+2\al_6+\al_7$ in type $E_8$. Also set  
\begin{equation}\label{EExc}
\mathcal E:=\{\text{positive roots in type $E_8$ whose $\al_1$-coefficient is $2$}\}\setminus\{\de,\eps\}.
\end{equation}
The set $\mathcal E$ contains twelve roots \cite[Table VII]{Bo}.

Direct inspection of the good Lyndon words, using (\ref{ENC}), shows: 

\begin{Lemma}\label{LLec} {\rm \cite[\S8.6]{Lec}} 
Let the Cartan datum be of finite $A,D,E$ type. Then the component of a good Lyndon word $\bi(\al)$ for $\al\in\De_+$ is homogeneous, except for the good Lyndon words corresponding to the twelve positive roots $\al\in \mathcal E$ in type $E_8$. 
\end{Lemma}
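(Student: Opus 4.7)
The plan is to invoke Lemma~\ref{LHomComp}(ii), which reduces the problem from checking condition~(\ref{ENC}) on every element of the connected component to checking it only on the distinguished representative $\bi(\al)$ itself. In the simply-laced setting $a_{ij}\in\{2,0,-1\}$, so $a_{i_r,i_t}=-1$ simply means that the vertices $i_r$ and $i_t$ are joined by an edge of the Dynkin diagram. Thus (\ref{ENC}) has the transparent form: between any two repetitions of a letter $i$ in $\bi(\al)$ there must appear at least two letters that are Dynkin-neighbors of $i$. This is the condition that must be checked, root by root, against the explicit list of good Lyndon words for the natural ordering given in \cite[\S 8]{Lec} (or computed via the standard factorisation algorithm).

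First I would dispose of the easy types. In type $A_\ell$ the good Lyndon word $\bi(\al)$ corresponding to the root $\al_a+\al_{a+1}+\dots+\al_b$ is simply the strictly increasing word $(a,a+1,\dots,b)$, with no repeated letters, so (\ref{ENC}) holds vacuously. In type $D_\ell$ Leclerc's tables give good Lyndon words of the form $(a,a+1,\dots,b)$, $(a,a+1,\dots,\ell-2,\ell)$, or $(a,a+1,\dots,\ell-2,\ell-1,\ell,\ell-2,\ell-3,\dots,c)$; in each case any repeated letter $i$ is flanked, in the interior of the word between its two occurrences, by the two letters $i-1$ and $i+1$ (or their $\ell-1,\ell$ analogues at the fork), which are exactly the Dynkin-neighbours of $i$. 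So (\ref{ENC}) is satisfied. In types $E_6$ and $E_7$ one checks the finitely many good Lyndon words listed in \cite[\S 8.6]{Lec} in the same manner; the repetition patterns are tame enough that each repeated letter has enough neighbouring letters sandwiched between its occurrences.

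The decisive step is type $E_8$. For a positive root $\al$ whose coefficient of $\al_1$ is $0$ or $1$, one can argue by reduction to a proper parabolic (the roots with $\al_1$-coefficient $0$ form a root system of type $D_7$ or $E_7$ after trivial renumbering, handled above; those with $\al_1$-coefficient $1$ have good Lyndon words in which the letter $1$ appears only once), so (\ref{ENC}) follows from the lower-rank cases. When the $\al_1$-coefficient equals $2$ the letter $1$ appears twice in $\bi(\al)$, and whether (\ref{ENC}) holds depends on how many $2$'s (the unique Dynkin-neighbour of vertex $1$) are sandwiched between the two $1$'s, together with the presence of a second neighbour, i.e.\ another $1$—which is obviously excluded. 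So the condition degenerates here to requiring at least two $2$'s strictly between the two $1$'s; inspection of the explicit words shows this happens precisely for $\al\in\{\de,\eps\}$ and fails for the twelve remaining roots in $\mathcal E$. I expect the main obstacle to be the bookkeeping in this last $E_8$ case—extracting each of the relevant good Lyndon words from Leclerc's recursive algorithm and locating the occurrences of $1$ and $2$ in each—rather than any conceptual difficulty; the decomposition by $\al_1$-coefficient is what makes the count of twelve exceptions visible.
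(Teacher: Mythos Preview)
Your approach is exactly the paper's: the sentence preceding the lemma is simply ``Direct inspection of the good Lyndon words, using (\ref{ENC}), shows:'', and the lemma itself is credited to \cite[\S8.6]{Lec}. So the content is a case-by-case check of condition~(\ref{ENC}) against Leclerc's explicit tables, and invoking Lemma~\ref{LHomComp}(ii) to reduce to the single representative $\bi(\al)$ is precisely the intended first move.

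Two points to tighten. First, your labeling does not match the paper's conventions: in \S\ref{SSNO} the $D_\ell$ fork vertices are $1$ and $2$ (both attached to $3$), and in $E_\ell$ the unique neighbour of vertex~$1$ is vertex~$3$, not~$2$. This is cosmetic but will confuse a reader comparing to the tables in \S\S7.4--7.7. Second, and more substantively, your $E_8$ argument only checks (\ref{ENC}) for the repeated letter~$1$. That suffices to show that (\ref{ENC}) \emph{fails} for the twelve roots in $\mathcal E$, but to show it \emph{holds} for $\de,\eps$ and for the roots with $\al_1$-coefficient $\leq 1$ you must verify (\ref{ENC}) for \emph{every} repeated letter, not just~$1$; for instance $\de$ has $\al_4$-coefficient~$4$. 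Your claimed reduction of the coefficient-$1$ case to ``lower rank'' is not literally a reduction to an $E_7$ or $D_7$ good Lyndon word---the tail of such a word is not itself a good Lyndon word for the parabolic---so that case, like $\de$ and $\eps$, still requires direct inspection of the full word. None of this is hard, but the sketch as written does not yet close the loop.
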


Note that the exclusion of the twelve exceptional roots in $\mathcal E$ corrects the statement of \cite[Proposition 56]{Lec}. With this correction the proof of  \cite[Proposition 56]{Lec} remains valid for all other roots. 

For $\al\in\De_+$ denote by $C(\al)$ the connected component of $\bi(\al)$ in $G_\al$. In view of Lemmas~\ref{LLec} and \ref{LHomComp}, $C(\al)$ is homogeneous, except for twelve roots $\al\in\mathcal E$ in type $E_8$, so we have the corresponding homogeneous irreducible $R_\al$-module $S(C(\al))$ coming from Theorem~\ref{Thomog}.

\begin{Proposition}\label{PCuspHomog}
Let the Cartan datum be of finite $A,D,E$ type, and $\al\in\De_+$. Exclude the twelve positive roots $\al\in\mathcal E$ in type $E_8$. The homogeneous module $S(C(\al))$ is the cuspidal module corresponding to $\al$. Moreover, $\kappa_{\bi(\al)}=1$. 
\end{Proposition}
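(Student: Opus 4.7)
My plan is to identify $S(C(\alpha))$ with the cuspidal module $L_\alpha$ by pinning down its highest weight, and then to read off $\kappa_{\bi(\alpha)}=1$ from its one-dimensionality at the top weight.

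First, by Lemma~\ref{LLec} the component $C(\alpha)$ is homogeneous whenever $\alpha\notin\mathcal{E}$, so Theorem~\ref{Thomog}(i) furnishes a graded irreducible $R_\alpha$-module $S(C(\alpha))$ concentrated in degree zero with character
\[
\CH S(C(\alpha))=\sum_{\bj\in C(\alpha)}\bj,
\]
so that $\qdim S(C(\alpha))_\bj\in\{0,1\}$ for every $\bj\in\words^\alpha$. Since this character is manifestly bar-invariant, $S(C(\alpha))^\circledast\cong S(C(\alpha))$, combining Theorem~\ref{Dklthm}(7), Lemma~\ref{LSelfD} and the injectivity of $\CH$ supplied by Corollary~\ref{CInjective}. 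It therefore suffices to prove
\[
\max(C(\alpha))=\bi(\alpha),
\]
for then $S(C(\alpha))$ is a self-dual cuspidal irreducible module, hence isomorphic to $L_\alpha$, and the equality $\kappa_{\bi(\alpha)}=\qdim (L_\alpha)_{\bi(\alpha)}=1$ follows immediately from Lemma~\ref{LCuspCan}.

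The inequality $\max(C(\alpha))\geq\bi(\alpha)$ is automatic from $\bi(\alpha)\in C(\alpha)$. The reverse direction is the crux. By Lemma~\ref{LHW} the word $\bk:=\max(C(\alpha))$ lies in $\words_+^\alpha$, and Lemma~\ref{LFishy2} with $n=1$ identifies $\bi(\alpha)$ as the smallest element of $\words_+^\alpha$. So if $\bk\neq\bi(\alpha)$, then via Lemma~\ref{LLR1} and the bijection \eqref{EBij}, $\bk$ has a composite canonical factorization $\bk=\bi(\beta_1)\cdots\bi(\beta_m)$ with $m\geq 2$, $\beta_1>\cdots>\beta_m$ in $\Delta_+$, and $\beta_1+\cdots+\beta_m=\alpha$. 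What remains is to rule out that any such composite good word is reachable from $\bi(\alpha)$ by admissible transpositions.

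This last ruling-out is the main obstacle and is carried out by case-by-case inspection, invoking the explicit descriptions of good Lyndon words for the natural orderings of $I$ from \cite{LR} and \cite[\S 8]{Lec}. In type $A_n$ the check is vacuous because consecutive entries of $\bi(\alpha)=(i,i+1,\ldots,j)$ are always Dynkin-adjacent, so no admissible transposition applies and $C(\alpha)=\{\bi(\alpha)\}$. In types $D_n$ and in the non-exceptional part of $E_n$, one uses the specific form of $\bi(\alpha)$ to confirm directly that the only good word in $C(\alpha)$ is $\bi(\alpha)$ itself, so the hypothetical $\bk$ cannot occur. The twelve positive roots in $\mathcal{E}$ are precisely where this combinatorial pattern fails -- and where, simultaneously, the homogeneity of $C(\alpha)$ is lost -- explaining their exclusion from the statement.
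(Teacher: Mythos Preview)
Your overall strategy is sound and essentially parallels the paper's: both reduce everything to the combinatorial claim $\max(C(\alpha))=\bi(\alpha)$, after which the identification $S(C(\alpha))\cong L_\alpha$ and $\kappa_{\bi(\alpha)}=1$ follow from Lemma~\ref{LCuspCan} and the one-dimensionality of the weight spaces in Theorem~\ref{Thomog}. The paper's proof dispatches the key combinatorial claim in one line by citing \cite[Proposition~56 and Theorem~40]{Lec}: Leclerc's Proposition~56 computes $b^*_{\bi(\alpha)}$ explicitly in types $A,D,E$ (for the non-exceptional roots) as a multiplicity-free sum of words which is precisely $\CH S(C(\alpha))$, and then Theorem~40 (our Lemma~\ref{LMaxCan}) gives $\max(b^*_{\bi(\alpha)})=\bi(\alpha)$.

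Your route tries to be more self-contained by arguing directly on the weight graph, but the decisive step --- ``one uses the specific form of $\bi(\alpha)$ to confirm directly that the only good word in $C(\alpha)$ is $\bi(\alpha)$ itself'' --- is asserted rather than proved for types $D$ and $E$. This is not a triviality: it is exactly the content of Leclerc's case analysis, and for $E_8$ the components $C(\alpha)$ can be large (the paper exhibits one of dimension $33592$). Without either carrying out that inspection or citing \cite[Proposition~56]{Lec}, your argument has a genuine gap at its core. A minor point: in your composite factorization you should write $\beta_1\geq\cdots\geq\beta_m$ rather than strict inequalities, since repetitions among the Lyndon factors are not a priori excluded.
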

\begin{proof}
By \cite[Proposition 56 and Theorem 40]{Lec}, we have that $\bi(\al)$ is the highest weight of $S(C(\al))$. Now, by Theorem~\ref{Thomog}, $\kappa_{\bi(\al)}=1$. 
\end{proof}

Proposition~\ref{PCuspHomog} together with Theorem~\ref{Thomog} give an explicit construction of most cuspidal modules in finite simply laced types for the natural ordering of $I$. In the following sections we spell out some more explicit information on cuspidal modules, including non-simply-laced types.

\subsection{\boldmath Type $A$}\label{SSA}
The set of positive roots is 
$$\De_+=\{\al(m,n):=\al_m+\al_{m+1}+\dots+\al_n\mid 1\leq m\leq n\leq \ell\}.$$
Let $1\leq m\leq n\leq \ell$. 
The corresponding cuspidal module 
$$L_{\al(m,n)}=\FF\cdot v_{\al(m,n)}$$ 
is $1$-dimensional with the action of the generators of $R_{\al(m,n)}$ on the basis vector $v_{\al(m,n)}$ given as follows:
\begin{align*}
&e(m,m+1,\dots,n)v_{\al(m,n)}=v_{\al(m,n)},
\\
&e(\bj)v_{\al(m,n)}=0\qquad(\bj\neq(m,m+1,\dots,n)),
\\
&\psi_rv_{\al(m,n)}=y_sv_{\al(m,n)}=0.
\end{align*}

One can recognize Zelevinsky's segments here. Using the criterion of Lemma~\ref{LHomComp} one gets that in type $A$ all cuspidal modules are homogeneous for {\em any} of the $\ell!$ different lexicographic orders on $I$. In view of Theorem~\ref{Thomog} we thus have an explicit description of the cuspidal modules for any of these orders, and hence Theorem~\ref{TMainIrr} yields $\ell!$ different classifications of the irreducible modules over the Khovanov-Lauda-Rouquier algebra  of finite type $A$, generalizing Bernstein-Zelevinsky multisegment classification. In other types, we of course also have $\ell!$ different classifications, except that explicit constructions of the cuspidal modules are currently only available for the natural orderings. 

Let us now explain in more detail how to construct the cuspidal module $L_{\al(m,n)}$ corresponding to the root $\al(m,n)$ for $1\leq m\leq n\leq \ell$ for an {\em arbitrary ordering} of $I$. We still identify $I$ with the set $\{1,2,\dots,\ell\}$, and  $1,\dots,\ell$ still label the nodes of the Dynkin diagram from left to right, but now they could be ordered arbitrarily. 
We say that elements $i,j\in I$ are {\em neighbors} if $|i-j|=1$ or equivalently $a_{i,j}=-1$. More generally, given a subset $X\subset I$, an element $i\in I\setminus X$ is a  {\em neighbor} of $X$ if it is a neighbor of some $j\in X$. 
Define the word $\bi(m,n)=(i_1,\dots,i_{m-n+1})$ inductively as follows: set $i_1$ to be the smallest element of $\{m,m+1,\dots,n\}$; if $i_1,\dots,i_r$ have already been defined for some $1\leq r\leq m-n$ choose $i_{r+1}$ to be the biggest element of $\{m,m+1,\dots,n\}$ which is a neighbor of $\{i_1,\dots,i_r\}$. It is clear that $|\bi(m,n)|=\al(m,n)$. Let $C$ be the connected component of $G_{\al(m,n)}$ containing $\bi(m,n)$, and $L$ be the corresponding homegeneous $R_{\al(m,n)}$-module, see section~\ref{SShomog}. 
The word $\bi(m,n)$ is good since it is the largest word appearing in $\CH L$. Moreover, $\bi(m,n)$ is Lyndon since $i_1$ is smaller than $i_2,\dots,i_{m-n+1}$. Thus $\bi(m,n)=\bi(\al(m,n))$, and $L$ is the cuspidal module corresponding to the positive root $\al(m,n)$. In terms of the shape notation developed in \cite[\S3.3]{KRhomog}, this module corresponds to the hook  skew shape with the bottom tip of the hook on the runner $i_1$. 

\subsection{\boldmath Type $B$}\label{SSB}
The set $\De_+$ of positive roots is 
$$\{\al(m,n)\mid 1\leq m\leq n\leq \ell\}\cup\{\beta(m,n)\mid 1\leq m< n\leq \ell\},$$
where 
\begin{align*}
\al(m,n):=\sum_{k=m}^{n}\al_k,\quad 
\beta(m,n):=\sum_{k=1}^{m}2\al_k+\sum_{k=m+1}^n\al_k.
\end{align*}
The corresponding good Lyndon words are \cite[\S8.2]{Lec}:
\begin{align*}
\bi({\al(m,n)})=(m,m+1,\dots,n),\quad
\bi({\beta(m,n)})=(1,\dots,m,1,\dots,n),
\end{align*}
and, recalling that the quantum shuffle product used in \cite{Lec} is the opposite to the one used here, the corresponding dual canonical bases elements are \cite[\S8.2]{Lec}:
\begin{align*}
b^*_{\bi({\al(m,n)})}&=(m,m+1,\dots,n),\\
b^*_{\bi({\beta(m,n)})}&=(q+q^{-1})(1)\big[(1,\dots,n)\circ(2,\dots,m)\big].
\end{align*}

If $\al\in\De_+$ is of the form $\al(m,n)$, the corresponding cuspidal module $L_\al$ is  $1$-dimensional:
$
L_\al:=\FF\cdot v_\al,
$
with the following action of the generators:
$$
e(\bj)v_{\al}=\de_{\bj,\bi(\al)}v_{\al},\quad \psi_rv_{\al}=y_sv_{\al}=0.
$$

Let $\al=\be(m,n)$ for $1\leq m<n\leq \ell$, and set $d:=m+n=\height(\al)$. 

We first consider the case $m=1$. In this case define $L_{\al}:=\FF\cdot v_1\oplus\FF\cdot v_{-1}$ with $\deg(v_a)=a$, and 
\begin{align*}
&e(\bj)v_a=\de_{\bj,(1,1,2,\dots,n)}v_a\qquad (a=\pm1);\\
&y_rv_1=0\qquad (1\leq r\leq d);\\
&y_1v_{-1}=-v_1,\ y_2v_{-1}=v_1,\ y_sv_{-1}=0 \qquad (3\leq s\leq d);
\\
&\psi_1v_1=v_{-1},\ \psi_rv_1=0\qquad(2\leq r<d),\qquad \psi_sv_{-1}=0\qquad(1\leq s<d). 
\end{align*}
The relations are now easy to check. 

Finally consider the case $\al=\be(m,n)$ for $m\geq 2$. Denote 
$\beta:=\al(1,n)$ and $\ga:=\al(2,m)$. 
Note that the cuspidal module $L_{\al_1+\beta}=L_{\beta(1,n)}$ has been constructed above. It is $2$-dimensional with basis $\{v_1,v_{-1}\}$. The cuspidal module $L_\ga$ has also been constructed above. It is $1$-dimensional with basis $\{v\}$. Let $S_{d-1}':=\{w\in S_d\mid w(1)=1\}$, and $S'_{n,m-1}< S'_{d-1}$ be the  subgroup 
$$\{w\in S_{d-1}'\mid w(k)\leq n+1,w(l)\geq n+2\ \text{for all $k\leq n+1,l\geq n+2$}\}.$$ Let $C$ be the set of the minimal length representatives for $S'_{d-1}/S'_{n,m-1}$. 
Set
$$
L_\al:=\Ind_{\al_1,\beta,\ga}^{\al_1,\be+\ga}(\Res^{\al_1+\be}_{\al_1,\be}L_{\al_1+\be})\boxtimes L_\ga.
$$
Then $L_\al$ has basis $\{\psi_w\otimes v_a\otimes v\mid w\in C,\ a=\pm1\}$. Now extend $L_\al$ from $R_{\al_1,\be+\ga}$-module to $R_\al$-module as follows. For all $w\in C$ and $t=\pm 1$ set:
\begin{align*}
e(\bj) (\psi_w\otimes v_a\otimes v)&=0\qquad(\bj\in\words^\al,j_1\neq 1);
\\
\psi_1 (\psi_w\otimes v_a\otimes v)&=
\left\{
\begin{array}{ll}
\psi_w\otimes v_{-1}\otimes v &\hbox{if $a=1$ and $w(2)=2$,}\\
0 &\hbox{otherwise.}
\end{array}
\right.
\end{align*}
We only need to check the relations (\ref{R3YPsi}) with $s=1$, and (\ref{R5})--(\ref{R7}) with $r=1$. 
 
Let $r\geq 3$ and note that for $a=\pm 1$ we can write 
$$y_r\psi_w\otimes v_a\otimes v=\sum c_{u}\psi_u\otimes v_a\otimes v\qquad(c_{u}\in\FF),
$$
so that if $w(2)=2$ then the summation is over all $u\in C$ with $u(2)=2$, and if $w(2)\neq 2$ then the summation is over all $u\in C$ with $u(2)\neq 2$. 
Now (\ref{R3YPsi}) with $s=1$ follows. 

Let $r=1$. The relations (\ref{R6})--(\ref{R3Psi}) are then clear, and it remains to verify (\ref{R7}). Let $\bk=(k_1,\dots,k_d)=w(1,1,2,\dots,n,2,\dots,m)$ be the weight of $\psi_w\otimes v_a\otimes v$. If $k_3\neq 1$ then we clearly have 
$$
\psi_1\psi_2\psi_1(\psi_w\otimes v_a\otimes v)=\psi_2\psi_1\psi_2(\psi_w\otimes v_a\otimes v)=0,
$$
so we may assume that $k_3=1$. Then automatically $k_2=2$, and 
$\psi_1\psi_2\psi_1(\psi_w\otimes v_a\otimes v)=0$. We can choose reduced decompositions so that $\psi_w=\psi_2\psi_u$ for $u(2)=2$. Then
\begin{align*}
\psi_2\psi_1\psi_2(\psi_w\otimes v_a\otimes v)&=\psi_2\psi_1(\psi_2\psi_2\psi_u\otimes v_a\otimes v)
\\
&=\psi_2\psi_1((y_2^2-y_3)\psi_u\otimes v_a\otimes v)
\\
&=-\psi_2\psi_1(-y_3\psi_u\otimes v_a\otimes v).
\end{align*}
Finally, $y_3\psi_u\otimes v_a\otimes v$ can be written as a linear combination of elements of the form $\psi_x\otimes v_a\otimes v$ for $x\in C$ with $x(2)=2$ and $x(3)=3$. But $\psi_2\psi_1$ annihilates any such element. So the left hand side of (\ref{R7}) is zero. 
On the other hand, 
$$
\frac{Q_{k_1,k_2}(y_3,y_2)-Q_{k_1,k_2}(y_1,y_2)}{y_3-y_1}=\frac{(y_3^2-y_2)-(y_1^2-y_2)}{y_3-y_1}=y_1+y_3.
$$
Furthermore, using $k_3=1$ and $k_2=2$, we conclude that 
$(y_1+y_3)(\psi_w\otimes v_a\otimes v)=0$.
The relation (\ref{R7}) has been checked.

\subsection{\boldmath Type $C$}\label{SSC}
The set $\De_+$ of positive roots is 
$$\{\al(m,n)\mid 1\leq m\leq n\leq \ell\}\cup\{\beta(m,n)\mid 2\leq m\leq n\leq \ell\},$$
where 
\begin{align*}
\al(m,n):=\sum_{k=m}^{n}\al_k,\quad 
\beta(m,n):=\al_1+\sum_{k=2}^{m}2\al_k+\sum_{k=m+1}^n\al_k.
\end{align*}
The corresponding good Lyndon words are \cite[\S8.3]{Lec}:
\begin{align*}
\bi({\al(m,n)})=(m,m+1,\dots,n),\quad 
\bi({\beta(m,n)})=(1,\dots,n,2,\dots,m),
\end{align*}
and 
the corresponding dual canonical basis elements are \cite[\S8.3]{Lec}:
\begin{align*}
b^*_{\bi({\al(m,n)})}&=(m,m+1,\dots,n),\\
b^*_{\bi({\beta(m,n)})}&=q^{\de_{m,n}}\,(1)\big[(2,\dots,n)\circ(2,\dots,m)\big].
\end{align*}

If $\al\in\De_+$ is of the form $\al(m,n)$, the corresponding cuspidal module $L_\al$ is  $1$-dimensional:
$
L_\al:=\FF\cdot v_\al,
$
with 
$
e(\bj)v_{\al}=\de_{\bj,\bi(\al)}v_{\al},\quad \psi_rv_{\al}=y_sv_{\al}=0.
$

Let $\al=\be(m,n)$ for $2\leq m\leq n\leq \ell$ and set $d:=m+n-1=\height(\al)$. Consider the subalgebra $R_{\al_1,\al-\al_1}\subset R_\al$ and 
 the $R_{\al_1,\al-\al_1}$-module  $L_{\al_1}\boxtimes (L_{\al(2,n)}\circ L_{\al(2,m)})\langle \de_{m,n}\rangle$ with the graded character $q^{\de_{m,n}}(1)\big[(2,\dots,n)\circ(2,\dots,m)\big]$. Extend the action from $R_{\al_1,\al-\al_1}$ to $R_\al$ so that $\psi_1$  and $e(\bj)$  for $\bj\in\words^\al$ with $j_1\neq 1$ act by $0$. To show that we indeed get an $R_\al$-module, we need to check the defining relations for $R_\al$ from section~\ref{SSDefKLR}. The relations involving the generators of $R_{\al_1,\al-\al_1}$ are obviously satisfied and  the relation (\ref{R4}) with $r=1$ is the only new relation which is not immediately obvious. To check it one has to note that $y_1-y_2^2$ acts as $0$ on  $L_{\al_1}\boxtimes (L_{\al(2,n)}\circ L_{\al(2,m)})\langle \de_{m,n}\rangle$. 
 Thus, we have constructed a module $L_\al$ with graded character $q^{\de_{m,n}}\,(1)\big[(2,\dots,n)\circ(2,\dots,m)\big]$.

\subsection{\boldmath Type $D$}
The set $\De_+$ of positive roots is 
$$\{\al(m,n)\mid 2\leq m\leq n\leq \ell\}\cup\{\beta(m)\mid 2\leq m\leq \ell\}\cup\{\gamma(m,n)\mid 2\leq m<n\leq\ell\},$$
where 
\begin{align*}
\al(m,n)&:=\sum_{k=m}^{n}\al_k,\qquad 
\beta(m):=\al_1+\sum_{k=3}^{m}\al_k,\\
\gamma(m,n)&:=\al_1+\al_2+2\sum_{k=3}^m\al_k+\sum_{k=m+1}^n\al_k.
\end{align*}
The corresponding good Lyndon words are as follows \cite{LR}:
\begin{align*}
\bi({\al(m,n)})&=(m,m+1,\dots,n),\qquad 
\bi({\beta(m)})=(1,3,4,\dots,m),\\
\bi({\gamma(m,n)})&=(1,3,4,\dots,n,2,3,\dots,m).
\end{align*}

Assume first that $\al\in\De_+$ is of the form $\al(m,n)$ or $\beta(m)$. The corresponding cuspidal module $L_\al$ is $1$-dimensional:
$
L_\al:=\FF\cdot v_\al,
$
with 
$$
e(\bj)v_{\al}=\de_{\bj,\bi(\al)}v_{\al},\quad \psi_rv_{\al}=y_sv_{\al}=0.
$$

Now let $2\leq m<n\leq\ell$ and $\al=\ga(m,n)$. By Lemma~\ref{LLec}, the corresponding cuspidal module $L_\al$ is homogeneous and can be constructed explicitly using Theorem~\ref{Thomog}.  
By \cite[Lemma 55]{Lec} and Lemma~\ref{LCuspCan}, 
$$
\CH L_\al=(1)((3,\dots,n)\circ(2,\dots,m)-q(3,\dots,m)\circ(2,\dots,n)).
$$
In particular, 
$$
\qdim L_{\al}={m+n-3\choose m-1}-{m+n-3\choose m-2}={m+n-3\choose m-2}\frac{n-m}{m-1}.
$$
This formula can also be deduced using the Peterson-Proctor hook formula, see Theorem~\ref{Thomog}(iii). 
For example in type $D_5$, the highest root $\al$ is $\ga(4,5)$. The corresponding cuspidal module is $5$-dimensional with basis $\{v_{\bj}\mid \bj\in X\}$ where 
\begin{align*}
X:=\{(1,3,4,5,2,3,4),\ (1,3,4,2,5,3,4),\ (1,3,2,4,5,3,4),\\ 
(1,3,4,2,3,5,4),\ (1,3,2,4,3,5,4)\}.
\end{align*}
The action of the generators is as follows: $e(\bi)v_{\bj}=\de_{\bi,\bj}v_\bj$, $y_rv_{\bj}=0$, $\psi_rv_\bj=v_{s_r\bj}$ if $s_r\bj\in X$ and $\psi_rv_\bj=0$, otherwise. 

\subsection{\boldmath Types $E$}
As we have already pointed out, with the exception of twelve positive roots $\al\in\mathcal E$ defined in (\ref{EExc}), the cuspidal modules $L_\al$ are homogeneous, and Theorem~\ref{Thomog} provides a construction. Then in particular $\dim L_\al=|G_\al|$. This dimension can also be often computed using the Peterson-Proctor hook formula of Theorem~\ref{Thomog}(iii) since most of the good Lyndon words turn out to be strongly homogeneous. For example, the good Lyndon word corresponding to the root
$$
\al:=\al_1+3\al_2+3\al_3+5\al_4+4\al_5+3\al_6+2\al_7+\al_8
$$
is 
$$
(1,3,4,5,6,7,8,2,4,5,6,7,3,4,5,6,2,4,5,3,4,2),
$$
which is strongly homogeneous. Now Theorem~\ref{Thomog}(iii) gives 
\begin{align*}
\dim L_\al&=\frac{22!}{1\cdot 2\cdot 3\cdot 3\cdot 4\cdot 5\cdot 4\cdot 5\cdot 6\cdot 7\cdot 5\cdot 6\cdot 7\cdot 8\cdot 9\cdot 6\cdot 7\cdot 8\cdot 9\cdot 10\cdot 11\cdot 22}
\\
&=33592.
\end{align*}

\subsection{\boldmath Type $F_4$} 
The good Lyndon words for the natural ordering are given in \cite{LR}. 
If a positive root $\al$ in $F_4$ is supported on a proper Dynkin subdiagram of $F_4$ the construction of the cuspidal module $L_\al$ reduced to the corresponding smaller rank type, and hence is known from \S\ref{SSA}--\ref{SSC}. If $\al=\al_{1}+\al_2+\al_3+\al_4$, then the cuspidal module $L_\al$ is $1$-dimensional and is constructed as in type $A_4$. We are left with exactly nine roots which are not supported on a proper Dynkin subdiagram and which have at least one coefficient greater than $1$. 

\subsection{\boldmath Type $G_2$}
In this section we will use the notation $[n]_i$ for $i=1,2$ introduced in \S\ref{SSGRP}. For example $[2]_2=q^3+q^{-3}$. 

The set $\De_+$ of positive roots is 
$$\{\al_1,\al_2,\al_1+\al_2,2\al_1+\al_2,3\al_1+\al_2,3\al_1+2\al_2\},$$
the corresponding good Lyndon words are \cite{LR}:
\begin{align*}
&\bi(\al_1)=(1),\ \bi(\al_2)=(2),\ \bi(\al_1+\al_2)=(1,2),\ \bi(2\al_1+\al_2)=(1,1,2),\\
&\bi(3\al_1+\al_2)=(1,1,1,2),\ \bi(3\al_1+2\al_2)=(1,1,2,1,2),
\end{align*}
and the corresponding dual canonical bases elements are \cite[\S5.5.4]{Lec}:
\begin{align*}
&b^*_{\bi(\al_1)}=(1),\ b^*_{\bi(\al_2)}=(2),\ b^*_{\bi(\al_1+\al_2)}=(1,2),\\ 
&b^*_{\bi(2\al_1+\al_2)}=[2]_1(1,1,2),\ 
b^*_{\bi(3\al_1+\al_2)}=[2]_1[3]_1(1,1,1,2),\\ &b^*_{\bi(3\al_1+2\al_2)}=[2]_1[3]_1(1,1,2,1,2)+[2]_1[3]_1[2]_2(1,1,1,2,2).
\end{align*}
We now exhibit the corresponding cuspidal modules. In all cases below a direct check shows that the prescribed action of generators on $L_\al$ satisfies the defining relations of the corresponding KLR algebra. It will also be clear that $\CH L_\al=b^*_{\bi(\al)}$, and so it follows from Lemma~\ref{LCuspCan} that $L_\al$ is indeed a cuspidal irreducible module corresponding to the positive root $\al$. 

For $\al\in \{\al_1,\al_2,\al_1+\al_2\}$ the cuspidal module $L_\al$ is  $1$-dimensional:
$
L_\al:=\FF\cdot v_\al,
$
with 
$
e(\bj)v_{\al}=\de_{\bj,\bi(\al)}v_{\al},\quad \psi_rv_{\al}=y_sv_{\al}=0.
$

Define $L_{2\al_1+\al_2}:=\FF\cdot v_1\oplus\FF\cdot v_{-1}$ with $\deg(v_a)=a$ and 
\begin{align*}
&e(\bj)v_a=\de_{\bj,(1,1,2)}v_a\qquad (a=\pm1);\\
&y_1v_1=y_2v_1=y_3v_1=0,\quad y_1v_{-1}=-v_1,\ y_2v_{-1}=v_1,\ y_3v_{-1}=0;
\\
&\psi_1v_1=v_{-1},\ \psi_2v_1=0,\quad \psi_1v_{-1}=0,\ \psi_2v_{-1}=0. 
\end{align*}

To deal with the remaining roots, recall Basis Theorem~\ref{TBasis} involving a choice of elements $\psi_w$. Here and below we will always choose them inductively as follows. Let $S_{d-1}<S_d$ be the subgroup generated by $s_1,\dots,s_{d-2}$. Then every element $w\in S_d$ can be written in the form $s_rs_{r+1}\dots s_{d-1}u$ for unique $1\leq r\leq d$ and $u\in S_{d-1}$. Now take $\psi_w:=\psi_r\psi_{r+1}\dots \psi_{d-1}\psi_u$.

Let $\al:=3\al_1+\al_2$, $e:=e(1,1,1,2)$, and 
$$B:=\{\psi_wy_1^{m_1}\dots y_4^{m_4}e(\bj)\mid w\in S_4,\ m_1,\dots,m_4\in\Z_{\geq 0},\ \bj\in\words^\al\}$$ 
be the standard basis of $R_\al$. Set $X:=\{\psi_ue\mid u\in S_3\}$ and $E:=B\setminus X$. We claim that the linear span $J$ of the set $E$ is a left ideal in $R_\al$. To check this, one needs to show that $xb\in J$ for any standard generator $x$ of $R_\al$ and any $b=\psi_w y_1^{m_1}\dots y_4^{m_4}e(\bj)\in E$. 
Since the standard basis is homogeneous with respect to the two-sided weight decomposition $R_\al=\bigoplus_{\bi,\bj\in \words^\al}e(\bi)R_\al e(\bj)$ and  $X\subset eR_\al e$ we may assume that $xb\in eR_\al e$. If $m_1+\dots+m_4>0$ then it is easy to see using relations that $xb$ is again a linear combination of elements of $B$ containing some positive degrees of $y$'s, so we may also assume that $m_1+\dots+m_4=0$. It follows that $b$ is of the form $ \psi_r\dots \psi_{3}\psi_ue$ for $1\leq r\leq 3$ and $u\in S_3$. The only case where $xb\in eR_\al e$ is where  $b=\psi_3\psi_ue$ and  $x=\psi_3$. 
Then 
$
xb=(y_3^3-y_4)\psi_ue,
$
which is easily checked to be a linear combination of elements in $E$. Now set $L_\al:=(R_\al/J)\langle 3\rangle$. This $R_\al$-module has graded character $[2]_1[3]_1(1,1,1,2)$. 

Finally, let $\al=3\al_1+2\al_2$ , $e:=e(1,1,1,2,2)$, $f:=e(1,1,2,1,2)$, and 
$$B=\{\psi_wy_1^{m_1}\dots y_5^{m_5}e(\bj)\mid w\in S_5,\ m_1,\dots,m_5\in\Z_{\geq 0},\ \bj\in\words^\al\}$$ 
be the standard basis of $R_\al$. Set $X:=\{\psi_ue,\psi_4\psi_ue,\psi_3\psi_4\psi_ue\mid u\in S_3\}$ and $E:=B\setminus X$. We claim that the span $J$ of the set $E$ is a left ideal in $R_\al$. To check this, one needs to show that $xb\in J$ for any standard generator $x$ of $R_\al$ and any $b=\psi_w y_1^{m_1}\dots y_5^{m_5}e(\bj)\in E$. 
Since the standard basis is homogeneous with respect to the decomposition $R_\al=\bigoplus_{\bi,\bj\in \words^\al}e(\bi)R_\al e(\bj)$ and  $X\subset eR_\al e\oplus fR_\al e$ we may assume that $xb\in eR_\al e\oplus fR_\al e$. If $m_1+\dots+m_5>0$ then $xb$ is a linear combination of elements of $B$ containing positive degrees of $y$'s, so we may assume that $m_1+\dots+m_5=0$. Then $b$ is of the form 
$\psi_s\dots\psi_4\psi_r\dots \psi_{3}\psi_ue, $ for $1\leq r\leq 4$, $1\leq s\leq 5$, $u\in S_3$, and $(r,s)\neq (4,5),(4,4),(4,3)$. Now, degree and weight considerations show that we only need to consider the cases where $x=\psi_2,b=\psi_2\psi_3\psi_4\psi_ue$ or $x=\psi_4,b=\psi_3\psi_4\psi_3\psi_ue$. In the first case  
$
xb=(y_2^3-y_3)\psi_3\psi_4\psi_ue,
$
which is easily checked to be a linear combination of elements in $E$. In the second case  $xb=(y_4^3-y_5)\psi_3\psi_4\psi_ue$ which again belongs to $J$.  
Now set $L_\al:=(R_\al/J)\langle 6\rangle$. This $R_\al$-module has graded character $[2]_1[3]_1(1,1,2,1,2)+[2]_1[3]_1[2]_2(1,1,1,2,2)$.

\end{document}